\newtheorem{theorem}{Theorem}[section]
\newtheorem{corollary}[theorem]{Corollary}
\newtheorem{definition}[theorem]{Definition}
\newtheorem{conjecture}[theorem]{Conjecture}
\newtheorem{lemma}[theorem]{Lemma}
\newtheorem{remark}[theorem]{Remark}
\newcommand\blfootnote[1]{
	\begingroup
	\renewcommand\thefootnote{}\footnote{#1}
	\addtocounter{footnote}{-1}
	\endgroup}
\newcommand{\z}{{\bf z}}
\newcommand{\y}{{\bf y}}
\newcommand{\x}{{\bf x}}
\newcommand{\1}{{\bf 1}}
\newcommand{\G}{\Gamma}
\newcommand{\g}{{\cal G}}
\newcommand{\h}{{\cal H}}
\tikzstyle{vertex}=[circle, draw, inner sep=0pt, minimum size=3pt]
\newcommand{\vertex}{\node[vertex]}
\begin{document}

\title{Quartic Graphs with Minimum Spectral Gap}
\author{ Maryam Abdi$^{\,\rm a}$ \quad  Ebrahim Ghorbani$^{\,\rm a,b,}$\thanks{Corresponding author}
\\[.3cm]
{\sl\normalsize $^{\rm a}$Department of Mathematics, K. N. Toosi University of Technology,}\\
{\sl\normalsize P. O. Box 16765-3381, Tehran, Iran}\\
{\sl\normalsize $^{\rm b}$Department of Mathematics, University of Hamburg, Bundesstra\ss{}e 55 (Geomatikum),}\\
{\sl\normalsize 20146 Hamburg, Germany }}
\maketitle
\blfootnote{{\em E-mail Addresses}: {\tt m.abdi@email.kntu.ac.ir, ebrahim.ghorbani@uni-hamburg.de}}

\begin{abstract}
Aldous and Fill conjectured that the maximum relaxation time for the random walk on a connected regular graph with $n$ vertices is $(1+o(1)) \frac{3n^2}{2\pi^2}$. This conjecture can be rephrased in terms of the spectral gap as follows:
the spectral gap (algebraic connectivity) of a connected  $k$-regular graph on $n$ vertices is at least
$(1+o(1))\frac{2k\pi^2}{3n^2}$, and the bound is attained for at least one value of $k$.
We determine the structure of connected quartic graphs on $n$ vertices with minimum spectral gap which enable us to show that the minimum spectral gap of connected quartic graphs on $n$ vertices is $(1+o(1))\frac{4\pi^2}{n^2}$.
From this result, the  Aldous--Fill conjecture follows for $k=4$.
 \vspace{4mm}
 	
\noindent {\bf Keywords:} Spectral gap, Algebraic connectivity, Quartic graph, Relaxation time \\[.1cm]
\noindent {\bf AMS Mathematics Subject Classification\,(2010):}   05C50, 60G50
\end{abstract}

\section{Introduction}

All graphs we consider are simple, that is undirected graphs without loops or multiple edges.
The difference between the two largest eigenvalues of the adjacency matrix of a graph $G$ is called the {\em spectral gap} of $G$.
If $G$ is a regular graph, then its spectral gap is equal to the second smallest eigenvalue of its Laplacian matrix  and
known as {\em algebraic connectivity}.

In 1976,  Bussemaker, \v Cobelji\'c, Cvetkovi\'c, and Seidel (\cite{Bussemaker},  see also \cite{Bussemaker2}), by means of a computer search, found  all non-isomorphic connected cubic graphs with $n \leq 14$ vertices.  They observed that when the algebraic connectivity is small, the graph is  `long' (meaning roughly that their diameter is as large as possible; this was later on justified in \cite{Imrich}). Indeed, as the algebraic connectivity decreases, both connectivity and girth decrease and diameter increases.  Based on these results, L. Babai (see \cite{Guiduli}) made a conjecture that described the structure of the connected cubic graph with minimum algebraic connectivity.
Guiduli \cite{Guiduli} (see also \cite{GuiduliThesis}) proved that the cubic graph with minimum algebraic connectivity must look like a path, built from specific blocks.
The result of Guiduli was improved as follows confirming the Babai's conjecture.

\begin{theorem}[Brand, Guiduli, and Imrich \cite{Imrich}]\label{thm:cubicBGI}
	Among all connected cubic graphs on $n$ vertices,  $n \geq 10$, the cubic graph depicted in Figure~\ref{fig:Gn} is the unique graph with minimum algebraic connectivity.
\end{theorem}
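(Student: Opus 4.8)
To prove Theorem~\ref{thm:cubicBGI}, the plan is to take Guiduli's structural result as the starting point and sharpen it from ``a path built from specific blocks'' to ``exactly the graph $G_n$ of Figure~\ref{fig:Gn}''. Throughout, write $\mu(G)$ for the algebraic connectivity. By Guiduli's theorem \cite{Guiduli}, a connected cubic graph $G$ on $n$ vertices with $\mu(G)$ minimum is a linear chain: it is obtained by concatenating in a row blocks drawn from an explicit finite list $\mathcal B$, with two prescribed end-blocks. So the task reduces to a one-dimensional optimization: among all such chains on $n$ vertices, determine the one(s) of smallest $\mu$, and establish uniqueness.

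First I would analyse a Fiedler vector $x$ of $G$; since $G$ is $3$-regular this means $Ax=(3-\mu)x$ with $\sum_v x_v=0$. Orienting the chain from left to right, the key lemma is that $x$ is monotone along it: the values of $x$ on the successive cut edges (or cut vertices) separating consecutive blocks form a monotone sequence, and within each block $x$ is essentially pinned down by its two boundary values, with no internal sign change. This is the discrete analogue of the monotonicity of the first nonconstant Neumann eigenfunction on a segment; I would derive it from Fiedler's theorem on the connectedness of $\{v:x_v\ge 0\}$ together with a short local argument inside each block, exploiting that $3-\mu$ is extremely close to $3$ (so $x$ is nearly harmonic and cannot oscillate over a small block). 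Monotonicity then lets one encode the eigenvalue equation block by block by a $2\times2$ transfer matrix $T_B(\mu)$ attached to each block $B$; together with the two boundary conditions, $\mu(G)$ is the least $\mu>0$ for which the corresponding product admits a nontrivial solution.

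The heart of the argument is then an exchange/perturbation step showing that all interior blocks of an optimal chain must be of one and the same type. Using the variational formula
\[
\mu(G)=\min_{\,y\perp\mathbf 1\,}\frac{\sum_{uv\in E}(y_u-y_v)^2}{\sum_v y_v^2},
\]
I would compare a putative optimal chain with the chains obtained by replacing a single interior block $B$ by a different admissible block $B'$ and re-balancing the number of blocks so as to keep $n$ fixed --- either by substituting a suitably translated Fiedler vector into the above Rayleigh quotient, or, more transparently, by comparing $T_B(\mu)$ with $T_{B'}(\mu)$ for $\mu$ in the relevant window $(0,c/n^2)$. The winning block is the one that accumulates the most ``phase'' per vertex (in the continuum limit, the piece of interval that is longest per unit increase of $n$), and the comparison is strict, so no genuine mixture of block types can be optimal; a parallel, purely finite computation fixes the two end-blocks. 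Once the interior block type and the end-blocks are determined, the multiset of blocks --- hence $G$ itself --- is forced, with the finitely many small cases (and any divisibility exceptions) settled by the enumeration of cubic graphs with $n\le14$ by Bussemaker et al.~\cite{Bussemaker}; strictness of the exchange inequality upgrades optimality to uniqueness and identifies $G$ with $G_n$ for all $n\ge10$.

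I expect the main obstacle to be precisely this exchange step and its quantitative side. One must compare the smallest positive eigenvalues of two graphs that differ only locally, to a relative accuracy finer than $O(n^{-2})$, and still extract a \emph{strict} inequality; equivalently, one must prove that the ``phase-per-vertex'' functional on the finitely many admissible blocks has a unique maximizer, and control the two end contributions tightly enough that they cannot overturn the bulk comparison. Making these estimates sharp enough to give uniqueness for every $n\ge10$, not merely asymptotically, is the delicate part, and is presumably why the small-$n$ computer data is invoked to close the argument.
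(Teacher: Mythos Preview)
This theorem is not proved in the present paper at all: it is quoted as a known result from Brand, Guiduli, and Imrich~\cite{Imrich}, serving only as background and motivation for the quartic problem. There is therefore no ``paper's own proof'' to compare your proposal against.

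That said, your outline is a plausible high-level description of the strategy one expects in~\cite{Imrich}, and it is visibly parallel to what the present paper does carry out for quartic graphs in Section~\ref{sec:structure}: start from a coarse structural theorem (Guiduli for cubic, Theorem~\ref{thm:quarticOLD} for quartic), exploit monotonicity of a Fiedler vector along the path-like structure, and then rule out all but one block type by local exchange arguments that compare Rayleigh quotients before and after a block substitution. Note, however, that the actual quartic proofs here do not use transfer matrices or a ``phase-per-vertex'' asymptotic comparison; instead each exchange is made strictly at the level of the Rayleigh quotient via Lemma~\ref{remark:delta} and Lemma~\ref{rem:main1}, with the eigen-equation used to express interior weights as explicit rational functions of $\mu$ and the boundary weights, and the resulting polynomial inequalities checked on the relevant interval for $\mu$. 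If you wish to reconstruct the cubic proof, that concrete mechanism is more likely to succeed than the asymptotic transfer-matrix picture you sketch, precisely because it yields strict inequalities for every $n$ above a small threshold rather than only asymptotically.
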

\begin{figure}[h!]
	\centering
	\begin{tikzpicture}
	\vertex[fill] (1) at (0,-.5) [] {};
	\vertex[fill] (2) at (0,.5) [] {};
	\vertex[fill] (3) at (1,-.5) [] {};
	\vertex[fill] (4) at (1,.5) [] {};
	\vertex[fill] (5) at (1.5,0) [] {};
	\vertex[fill] (6) at (2,0) [] {};
	\vertex[fill] (7) at (2.5,.5) [] {};
	\vertex[fill] (8) at (2.5,-.5) [] {};
	\vertex[fill] (9) at (3,0) [] {};
	\vertex[fill] (10) at (3.5,0) [] {};
	\vertex[fill] (11) at (4,.5) [] {};
	\vertex[fill] (12) at (4,-.5) [] {};
	\vertex[fill] (13) at (4.5,0) [] {};
	\vertex[fill] (22) at (6.4,0) [] {};
	\vertex[fill] (23) at (6.9,.5) [] {};
	\vertex[fill] (24) at (6.9,-.5) [] {};
	\vertex[fill] (25) at (7.4,0) [] {};
	\vertex[fill] (26) at (7.9,0) [] {};
	\vertex[fill] (27) at (8.4,.5) [] {};
	\vertex[fill] (28) at (8.4,-.5) [] {};
	\vertex[fill] (29) at (8.9,0) [] {};
	\vertex[fill] (34) at (10.9,-.5) [] {};
	\vertex[fill] (33) at (10.9,.5) [] {};
	\vertex[fill] (32) at (9.9,-.5) [] {};
	\vertex[fill] (31) at (9.9,.5) [] {};
	\vertex[fill] (30) at (9.4,0) [] {};
	\tikzstyle{vertex}=[circle, draw, inner sep=0pt, minimum size=0pt]
	\vertex[fill] (14) at (4.8,0) [] {};
	\vertex[fill] (15) at (5.2,0) [] {};
	\vertex[fill] (16) at (5.3,0) [] {};
	\vertex[fill] (17) at (5.4,0) [] {};
	\vertex[fill] (18) at (5.5,0) [] {};
	\vertex[fill] (19) at (5.6,0) [] {};
	\vertex[fill] (20) at (5.7,0) [] {};
	\vertex[fill] (21) at (6.1,0) [] {};
	\path
	(1) edge (2)
	(1) edge (3)
	(1) edge (4)
	(2) edge (3)
	(2) edge (4)
	(3) edge (5)
	(4) edge (5)
	(5) edge (6)
	(6) edge (7)
	(6) edge (8)
	(7) edge (8)
	(7) edge (9)
	(8) edge (9)
	(9) edge (10)
	(10) edge (11)
	(10) edge (12)
	(11) edge (12)
	(11) edge (13)
	(12) edge (13)
	(13) edge (14)
	(15) edge (16)
	(17) edge (18)
	(19) edge (20)
	(21) edge (22)
	(22) edge (23)
	(22) edge (24)
	(23) edge (24)
	(23) edge (25)
	(24) edge (25)
	(25) edge (26)
	(26) edge (27)
	(26) edge (28)
	(27) edge (28)
	(27) edge (29)
	(28) edge (29)
	(29) edge (30)
	(30) edge (31)
	(30) edge (32)
	(31) edge (33)
	(31) edge (34)
	(32) edge (33)
	(32) edge (34)
	(33) edge (34);
	\end{tikzpicture}
	\vspace{.2cm} \\
	\begin{tikzpicture}
	\vertex[fill] (1) at (0,-.5) [] {};
	\vertex[fill] (2) at (0,.5) [] {};
	\vertex[fill] (3) at (1,-.5) [] {};
	\vertex[fill] (4) at (1,.5) [] {};
	\vertex[fill] (5) at (1.5,0) [] {};
	\vertex[fill] (6) at (2,0) [] {};
	\vertex[fill] (7) at (2.5,.5) [] {};
	\vertex[fill] (8) at (2.5,-.5) [] {};
	\vertex[fill] (9) at (3,0) [] {};
	\vertex[fill] (10) at (3.5,0) [] {};
	\vertex[fill] (11) at (4,.5) [] {};
	\vertex[fill] (12) at (4,-.5) [] {};
	\vertex[fill] (13) at (4.5,0) [] {};
	\vertex[fill] (22) at (6.4,0) [] {};
	\vertex[fill] (23) at (6.9,.5) [] {};
	\vertex[fill] (24) at (6.9,-.5) [] {};
	\vertex[fill] (25) at (7.4,0) [] {};
	\vertex[fill] (26) at (7.9,0) [] {};
	\vertex[fill] (27) at (8.4,.5) [] {};
	\vertex[fill] (28) at (8.4,-.5) [] {};
	\vertex[fill] (29) at (8.9,0) [] {};
	\vertex[fill] (34) at (10.9,-.5) [] {};
	\vertex[fill] (33) at (10.9,.5) [] {};
	\vertex[fill] (32) at (9.9,-.5) [] {};
	\vertex[fill] (31) at (9.9,.5) [] {};
	\vertex[fill] (30) at (9.4,0) [] {};
	\vertex[fill] (35) at (11.9,-.5) [] {};
	\vertex[fill] (36) at (11.9,.5) [] {};
	\tikzstyle{vertex}=[circle, draw, inner sep=0pt, minimum size=0pt]
	\vertex[fill] (14) at (4.8,0) [] {};
	\vertex[fill] (15) at (5.2,0) [] {};
	\vertex[fill] (16) at (5.3,0) [] {};
	\vertex[fill] (17) at (5.4,0) [] {};
	\vertex[fill] (18) at (5.5,0) [] {};
	\vertex[fill] (19) at (5.6,0) [] {};
	\vertex[fill] (20) at (5.7,0) [] {};
	\vertex[fill] (21) at (6.1,0) [] {};
	\path
	(1) edge (2)
	(1) edge (3)
	(1) edge (4)
	(2) edge (3)
	(2) edge (4)
	(3) edge (5)
	(4) edge (5)
	(5) edge (6)
	(6) edge (7)
	(6) edge (8)
	(7) edge (8)
	(7) edge (9)
	(8) edge (9)
	(9) edge (10)
	(10) edge (11)
	(10) edge (12)
	(11) edge (12)
	(11) edge (13)
	(12) edge (13)
	(13) edge (14)
	(15) edge (16)
	(17) edge (18)
	(19) edge (20)
	(21) edge (22)
	(22) edge (23)
	(22) edge (24)
	(23) edge (24)
	(23) edge (25)
	(24) edge (25)
	(25) edge (26)
	(26) edge (27)
	(26) edge (28)
	(27) edge (28)
	(27) edge (29)
	(28) edge (29)
	(29) edge (30)
	(30) edge (31)
	(30) edge (32)
	(31) edge (32)
	(31) edge (33)
	(32) edge (34)
	(33) edge (35)
	(34) edge (36)
	(33) edge (36)
	(34) edge (35)
	(35) edge (36);
	\end{tikzpicture}
	\caption{The cubic graph with minimum spectral gap on $n\equiv2\pmod4$ and $n\equiv0\pmod4$ vertices, respectively}
	\label{fig:Gn}
\end{figure}
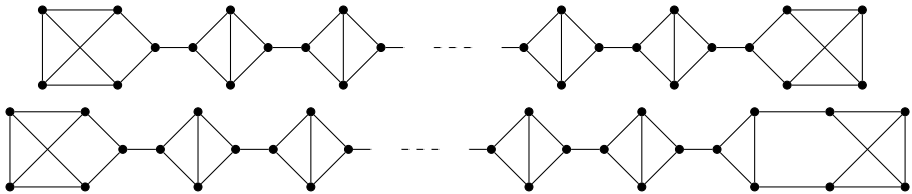

The {\em relaxation time} of the random walk on a graph $G$  is defined by $\tau=1/(1-\eta_2)$, where $\eta_2$ is the second largest eigenvalue of
the {\em transition matrix} of $G$, that is the matrix $\Delta^{-1} A$ in which $\Delta$ and $A$ are the diagonal matrix of vertex degrees and the adjacency matrix of $G$, respectively.
A central problem in the study of random walks is to determine the {\em mixing time}, a measure of how fast the
random walk converges to the stationary distribution. As seen throughout the literature \cite{aldous2002reversible,chung}, the  relaxation time is the primary term controlling mixing time. Therefore, relaxation time is directly associated with the rate of convergence  of the random walk.

Our main motivation in this work is the following conjecture on the maximum relaxation time of the random walk in regular graphs.

\begin{conjecture}[{Aldous and Fill \cite[p.~217]{aldous2002reversible}}]\label{conj:A-F} \rm
	Over all connected regular graphs on $n$ vertices, $\max \tau =(1+o(1)) \frac{3n^2}{2\pi^2}$.
\end{conjecture}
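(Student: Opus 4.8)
The plan is to recast $\tau$ spectrally and then split the task into a matching construction and a degree-uniform upper bound. For a connected $k$-regular graph $G$ on $n$ vertices we have $\eta_2=\lambda_2(A)/k=1-\mu(G)/k$, where $\mu(G)$ denotes the algebraic connectivity; hence $\tau(G)=k/\mu(G)$, and Conjecture~\ref{conj:A-F} is equivalent to asserting that the minimum of $\mu(G)/k$ over all $n$-vertex connected regular graphs (and all admissible $k$) equals $(1+o(1))\frac{2\pi^2}{3n^2}$. So I must (i) exhibit a regular graph with $\tau=(1+o(1))\frac{3n^2}{2\pi^2}$, and (ii) prove $\tau(G)\le(1+o(1))\frac{3n^2}{2\pi^2}$ for \emph{every} connected regular $G$ on $n$ vertices, with the estimate uniform in the degree.

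For (i) I would take the cubic graph $G_n$ of Figure~\ref{fig:Gn} from Theorem~\ref{thm:cubicBGI} and estimate $\mu(G_n)$ directly. Since $G_n$ is a chain of $\Theta(n)$ identical blocks, its Fiedler vector is essentially constant on each block and the block-averages trace out a discretized half-period sine along the backbone; an equitable-partition and interlacing argument then reduces the computation of $\mu(G_n)$ to the smallest positive eigenvalue of a fixed weighted tridiagonal matrix of order $\Theta(n)$, which is $(1+o(1))\,\pi^2(\mathrm{length})^{-2}$. Keeping track of the structural constants yields $\mu(G_n)=(1+o(1))\frac{2\pi^2}{n^2}$, so $\tau(G_n)=(1+o(1))\frac{3n^2}{2\pi^2}$ and $\max\tau\ge(1-o(1))\frac{3n^2}{2\pi^2}$.

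For (ii) the degree is handled case by case. The cases $k=2$ (cycles, with $\mu=2(1-\cos(2\pi/n))$ comfortably above $\frac{4\pi^2}{3n^2}$) and $k=4$ (our main quartic result, giving $\mu\ge(1-o(1))\frac{4\pi^2}{n^2}>\frac{8\pi^2}{3n^2}$) are already in hand; $k=3$ follows by combining the reduction of Guiduli underlying Theorem~\ref{thm:cubicBGI} — which forces a minimizer to be a chain of prescribed blocks — with the gap estimate of part (i) applied to that family. What remains is to prove, for each fixed $k\ge5$ and then uniformly as $k\to\infty$, a structure theorem stating that a connected $k$-regular graph of minimum algebraic connectivity is a \emph{path of blocks}: two bounded end-blocks together with a linear chain of bounded internal blocks, each joined to the backbone through a cut edge (or bounded cut set). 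Granting this, part (i)'s estimate gives $\mu\ge(1-o(1))\frac{c_k k}{n^2}$ for a structural constant $c_k$ depending only on the optimal block, and one verifies $c_k>\tfrac{2}{3}\pi^2$ for all $k\ne 3$; indeed the bridge-connected path-of-blocks model ($K_{k+1}$ with a bounded number of edges deleted, consecutive blocks joined by single edges) indicates $c_k\to\pi^2$, so that $\tau\le(1+o(1))\frac{n^2}{\pi^2}$ for large $k$, leaving $k=3$ as the unique asymptotic maximizer and giving $\max\tau=(1+o(1))\frac{3n^2}{2\pi^2}$.

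The main obstacle is exactly this structure theorem for general $k$. Guiduli's cubic argument and its quartic analogue both proceed by delicate local surgeries that strictly decrease the algebraic connectivity, controlled through perturbation bounds on the Fiedler value and through estimates on the behaviour of the Fiedler vector across cut edges; carrying such surgeries out for arbitrary $k$ — and, harder still, keeping all constants effective while $k$ is allowed to grow with $n$ — will require genuinely new ideas. A further difficulty is that the only degree-independent bound that comes for free, namely $\mathrm{diam}(G)\le\frac{3n}{k+1}-1$ for connected $k$-regular $G$ combined with Mohar's inequality $\mu\ge\frac{4}{n\,\mathrm{diam}(G)}$, yields only $\tau<\frac{3n^2}{4}$, overshooting the conjectured constant by a factor of about $\pi^2/2$; so no range of $k$ can be dismissed with soft arguments, and an essentially sharp, uniform-in-$k$ estimate is unavoidable.
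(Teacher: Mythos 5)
The statement you set out to prove is Conjecture~\ref{conj:A-F}, and the paper itself offers no proof of it: it only rephrases the conjecture via $\tau=k/\mu$ for $k$-regular graphs and then settles the single case $k=4$ (Theorems~\ref{thm:muGn} and~\ref{thm:quarticNEW}), the case $k=3$ having been settled by Theorem~\ref{thm:cubicBGI} together with the estimate $\mu=(1+o(1))\frac{2\pi^2}{n^2}$ from \cite{Abdi}. Your part (i), the lower bound $\max\tau\ge(1-o(1))\frac{3n^2}{2\pi^2}$ via the cubic chain $G_n$, is sound in outline, though as written it leans on an ``equitable-partition and interlacing'' reduction that by itself only bounds $\mu(G_n)$ from one side; a complete argument needs both a test-vector upper bound and a path-comparison lower bound of the kind the paper carries out for $\h_{0,0}$ and $\h_{4,4}$ in the proof of Theorem~\ref{thm:muGn} (or simply a citation of \cite{Abdi}).

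The genuine gap is your part (ii). The entire content of the conjecture is the uniform upper bound on $\tau$ over \emph{all} degrees, and for $k\ge5$ you have a plan, not an argument: the structure theorem you invoke (a minimizer is a path of bounded blocks) is not known for any $k\ge5$, and the known cases $k=3,4$ each required long, degree-specific surgery arguments (Guiduli, Brand--Guiduli--Imrich, \cite{Abdi}, and Section~3 of this paper) whose forbidden-subgraph lemmas do not transfer mechanically. The subsequent claims that the structural constant satisfies $c_k>\frac{2}{3}\pi^2$ for all $k\ne3$ and that $c_k\to\pi^2$ are asserted, not verified, and even granting a structure theorem for each fixed $k$ you would still need uniformity when $k$ grows with $n$, where ``bounded blocks'' no longer makes sense; as you yourself note, the soft bound from Mohar's inequality plus the diameter bound $\mathrm{diam}\le\frac{3n}{k+1}-1$ misses the target constant by a factor of about $\pi^2/2$, so no regime of $k$ can be dispatched cheaply. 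Your own closing admission that this step ``will require genuinely new ideas'' is accurate: what you have is a reduction plus a research program, not a proof, and the statement remains a conjecture both in the paper and after your attempt.
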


 For a graph $G$,  $L(G)=\Delta-A$ is its  Laplacian matrix. The second smallest eigenvalue of $L(G)$, i.e. the  algebraic connectivity of $G$ is denoted by  $\mu(G)$.
When  $G$ is regular, of degree $k$ say, then its transition matrix is $\frac1k A$ and its Laplacian is  $kI-A$.
It is then seen that the relaxation time of $G$ is equal to $k/\mu(G)$. Also as $G$ is regular, $\mu(G)$ is the same as its spectral gap.
So within the family of $k$-regular graphs, maximizing the relaxation time is equivalent to minimizing the spectral gap. 
More precisely, we have the following rephrasement  of the Aldous--Fill conjecture.

\begin{conjecture}\rm  The spectral gap (algebraic connectivity) of a connected   $k$-regular graph on $n$ vertices is at least $(1+o(1))\frac{2k\pi^2}{3n^2}$, and the bound is attained at least for one value of $k$.
\end{conjecture}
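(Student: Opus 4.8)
The statement falls into two halves: the universal lower bound $\mu(G)\ge(1+o(1))\frac{2k\pi^2}{3n^2}$, valid for every fixed $k$ and every connected $k$-regular graph $G$ on $n$ vertices, and the assertion that this bound is asymptotically attained for at least one value of $k$. The attainment is the cheaper half: I would take $k=3$ together with the graphs $G_n$ of Figure~\ref{fig:Gn} supplied by Theorem~\ref{thm:cubicBGI}, and check that $\mu(G_n)=(1+o(1))\frac{2\pi^2}{n^2}$, which is exactly $(1+o(1))\frac{2\cdot3\cdot\pi^2}{3n^2}$. Since $G_n$ is a caterpillar whose spine is built from $\Theta(n)$ copies of one small block, its Fiedler vector is constant on the layers of that spine up to an $o(1)$ relative error, so $\mu(G_n)$ equals, to within a factor $1+o(1)$, the least positive eigenvalue of the weighted path obtained by contracting the layers; that path has periodic vertex masses and edge weights, and homogenizing them (the harmonic mean of the weights over the arithmetic mean of the masses, divided by the squared length) produces the constant $\frac{2\pi^2}{n^2}$.

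For the lower bound I would follow the architecture of Guiduli's cubic argument~\cite{Guiduli}, sharpened in Theorem~\ref{thm:cubicBGI}, and of the quartic argument of the present paper, in three stages. \emph{Stage~1 (structural reduction).} Assuming $\mu(G)\le 10k/n^2$, say (otherwise there is nothing to prove for large $n$), I would use the sublevel sets of a Fiedler vector together with an isoperimetric estimate on their edge boundaries to show that $G$ is path-like: there is a layering $h\colon V\to\{0,1,\dots,L\}$ in which every edge joins two vertices of the same or of consecutive layers, with $L=\Theta(n)$, with all layers of bounded size, and with layer sizes $n_i=|h^{-1}(i)|$ and consecutive-layer edge counts $m_i$ satisfying $\sum_i n_i=n$, $1\le m_i\le n_in_{i+1}$ (connectivity and simplicity), and $n_i(k{-}n_i{+}1)\le m_{i-1}+m_i\le kn_i$ ($k$-regularity). \emph{Stage~2 (comparison with a weighted path).} When $\mu(G)$ is this small the Fiedler vector changes by $o(1)$ along every edge, hence is nearly constant on each bounded-size layer; discarding the within-layer edges (which only shrinks the Dirichlet form) and replacing the Fiedler vector by its layer averages costs only a factor $1+o(1)$, so $\mu(G)\ge(1-o(1))\lambda_1(P)$, where $P$ is the weighted path on $\{0,\dots,L\}$ with vertex masses $n_i$ and edge weights $m_i$.

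\emph{Stage~3 (optimization).} It remains to prove that every admissible weighted path --- every $L$, every choice of positive integers $n_i$ with $\sum_i n_i=n$, and every choice of $m_i$ obeying the inequalities of Stage~1 --- has $\lambda_1(P)\ge(1+o(1))\frac{2k\pi^2}{3n^2}$. This is a one-dimensional constrained eigenvalue minimization; in the continuum it reads: minimize $\lambda$ subject to $-(cg')'=\lambda\rho g$ on an interval of length $\Lambda$ with Neumann conditions and $\int\rho=n$, where the density $\rho$ and conductance $c$ are coupled pointwise by the constraints $1\le c\le\rho^2$ and $\rho(k{+}1{-}\rho)\le 2c\le k\rho$ inherited from $k$-regularity and simplicity. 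I would analyze it by Euler--Lagrange together with a rearrangement argument. The crucial feature is that the extremal profile is \emph{not} constant (the uniform profile is in fact infeasible for $k\ge3$): for $k=3$ the minimum is $\frac{2\pi^2}{n^2}$, realized by the $(1,1,2)$-periodic caterpillar of Stage~1, and --- as the present paper shows --- for $k=4$ the minimum is $\frac{4\pi^2}{n^2}$, which still exceeds $\frac{8\pi^2}{3n^2}$, so the conjectured inequality holds for $k=4$ with room to spare.

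The step I expect to be the genuine obstacle is Stage~1, \emph{uniformly in $k$}. Proving that small algebraic connectivity forces a clean, bounded-width layering is the technical heart of Guiduli's cubic argument and of the quartic argument of this paper, and in both it rests on a delicate, degree-specific dissection of the local structure across each Fiedler cut --- precisely the part that does not obviously survive the passage to general $k$. Accordingly, the realistic scope here is to carry Stages~1--3 through for $k=4$; combined with the $k=3$ attainment above, this verifies the conjecture for $k=4$ and, via $\tau=k/\mu$, the Aldous--Fill conjecture for quartic graphs, while leaving the general statement open.
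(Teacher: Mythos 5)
You are trying to prove a statement that is, in the paper, a \emph{conjecture}: the rephrased Aldous--Fill conjecture is never proved there, so there is no proof of it to compare yours against. What the paper actually establishes is only the instance $k=4$: Theorem~\ref{thm:muGn} together with Theorem~\ref{thm:quarticNEW} yields Corollary~\ref{corollary}, that the minimum algebraic connectivity of connected quartic graphs is $(1+o(1))\frac{4\pi^2}{n^2}$, which exceeds the conjectured bound $(1+o(1))\frac{8\pi^2}{3n^2}$; the case $k=3$, which also supplies the ``attained for at least one $k$'' half because $(1+o(1))\frac{2\pi^2}{n^2}=(1+o(1))\frac{2\cdot 3\pi^2}{3n^2}$, is quoted from \cite{Imrich} and \cite{Abdi}, not reproved. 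Your proposal, by its own closing admission, does the same: it verifies attainment at $k=3$ by citation and sketches $k=4$, ``leaving the general statement open.'' So it is not a proof of the statement; it is a research program with the decisive steps missing.

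To name the gaps concretely: Stage~1 --- that $\mu(G)=O(k/n^2)$ forces a path-like layering with layers of size bounded in terms of $k$ --- is exactly the degree-specific structural theorem that exists only for $k=3$ (\cite{Guiduli}, \cite{Imrich}) and for $k=4$ (the result of \cite{Abdi} quoted as Theorem~\ref{thm:quarticOLD}), in each case via an intricate block-by-block analysis; you give no argument that it holds uniformly in $k$, and none is known. Stage~3 is likewise only asserted: proving that your constrained one-dimensional eigenvalue minimization has value at least $\frac{2k\pi^2}{3n^2}$ for every $k$ is essentially a restatement of the conjecture, not a derivation, and your parenthetical claim that the uniform profile is infeasible for $k\ge3$ is false as stated (for $k=3$ the constant profile $\rho\equiv2$, $c\equiv2$ satisfies $1\le c\le\rho^2$ and $\rho(k+1-\rho)\le 2c\le k\rho$; it simply is not extremal). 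Even in Stage~2, the claim that the Fiedler vector is nearly constant on layers needs the bounded-width conclusion of Stage~1, so the stages cannot be decoupled. What you can legitimately conclude is precisely what the paper concludes: the bound is attained at $k=3$, the conjecture holds (with room to spare) at $k=4$, and the general lower bound for all $k$ remains open.
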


It is worth mentioning that in \cite{actt}, it is proved that the maximum relaxation time for the random walk on a  connected graph on $n$ vertices is $(1 +o(1))\frac{n^3}{54}$, settling another conjecture by Aldous and Fill (\cite[p.~216]{aldous2002reversible}).

Abdi, Ghobani and Imrich \cite{Abdi} proved that the algebraic connectivity of the graphs of Theorem~\ref{thm:cubicBGI} is $(1+o(1))\frac{2\pi^2}{n^2}$, implying that this is indeed the minimum spectral gap  of connected cubic graphs of order $n$. This settled the  Aldous--Fill  conjecture for $k=3$.
As the next case of the Aldous--Fill conjecture and as a continuation of Babai's conjecture, we consider quartic, i.e.~$4$-regular graphs and investigate {\em minimal} quartic graphs of a given order $n$, that is,  the  graphs which attain the minimum spectral gap among all conncetd quartic graphs of order $n$. In \cite{Abdi}, it was shown that similar to the cubic case, minimal quartic graphs have a path-like structure (see Figure~\ref{fig:path-like}) with specified blocks (see Theorem~\ref{thm:quarticOLD} below). However, the precise description  of minimal quartic graphs was left as a conjecture given below.

\begin{figure}[h!]
	\centering
	\begin{tikzpicture}
	\tikzstyle{vertex}=[draw, inner sep=0pt, minimum size=0pt]
	\vertex[fill] (1) at (0,0) [label=left:\tiny{}] {};
	\vertex[fill] (2) at (.5,.5) [] {};
	\vertex[fill] (3) at (.5,-.5) [] {};
	\vertex[fill] (4) at (1.5,.5) [] {};
	\vertex[fill] (5) at (1.5,-.5) [] {};
	\vertex[fill] (6) at (2,0) [] {};
	\vertex[fill] (7) at (2.5,.5) [] {};
	\vertex[fill] (8) at (2.5,-.5) [] {};
	\vertex[fill] (9) at (3.5,.5) [] {};
	\vertex[fill] (10) at (3.5,-.5) [] {};
	\vertex[fill] (11) at (4,0) [] {};
	\vertex[fill] (12) at (4.5,.5) [] {};
	\vertex[fill] (13) at (4.5,-.5) [] {};
	\vertex[fill] (14) at (5.5,.5) [] {};
	\vertex[fill] (15) at (5.5,-.5) [] {};
	\vertex[fill] (16) at (6,0) [] {};
	\vertex[fill] (20) at (7.6,0) [] {};
	\vertex[fill] (21) at (8.1,.5) [] {};
	\vertex[fill] (22) at (8.1,-.5) [] {};
	\vertex[fill] (23) at (9.1,.5) [] {};
	\vertex[fill] (24) at (9.1,-.5) [] {};
	\vertex[fill] (25) at (9.6,0) [] {};
	\tikzstyle{vertex}=[circle, draw, inner sep=0pt, minimum size=1pt]
	\vertex[fill] (17) at (6.6,0) [] {};
	\vertex[fill] (18) at (6.8,0) [] {};
	\vertex[fill] (19) at (7,0) [] {};
	\tikzstyle{vertex}=[circle, draw, inner sep=0pt, minimum size=0pt]
	\vertex (s) at (6.2,.2) [label=right:$$] {};
	\vertex (ss) at (6.2,-.2) [label=right:$$] {};
	\vertex (sss) at (7.4,.2) [label=right:$$] {};
	\vertex (ssss) at (7.4,-.2) [label=right:$$] {};
	\path[draw,thick]
	(1) edge (2)
	(1) edge (3)
	(3) edge (5)
	(2) edge (4)
	(4) edge (6)
	(5) edge (6)
	(6) edge (7)
	(6) edge (8)
	(7) edge (9)
	(8) edge (10)
	(10) edge (11)
	(9) edge (11)
	(11) edge (12)
	(11) edge (13)
	(12) edge (14)
	(13) edge (15)
	(14) edge (16)
	(15) edge (16)
	(16) edge (ss)
	(16) edge (s)
	(sss) edge (20)
	(ssss) edge (20)
	(20) edge (21)
	(20) edge (22)
	(21) edge (23)
	(22) edge (24)
	(24) edge (25)
	(23) edge (25);
	\end{tikzpicture}
	\caption{The path-like structure}\label{fig:path-like}
\end{figure}
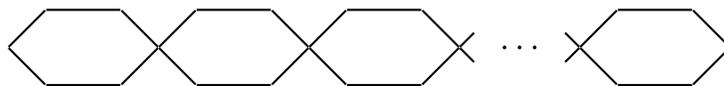

\begin{definition}\rm For any $n\ge11$, define a quartic graph $\g_n$ as follows.
Let $m$ and $r\le4$ be non-negative integers such that $n-11=5m+r$.
Then $\g_n$ consists of $m$ middle blocks $M_0$ and each end block is one of $D_0,D_1,D_2,D_4$ of Figure~\ref{fig:BlockMin}. If $r=0$, then both end blocks are $D_0$. If $r=1$, then  the end blocks are $D_0$ and $D_1$. If $r=2$, then both end blocks are $D_1$. If $r=3$, then  the end blocks are $D_1$ and $D_2$.  Finally, if  $r=4$, then  the end blocks are $D_0$ and $D_4$. (Note that right end blocks are the mirror images of one of $D_0,D_1,D_2,D_4$.)
\end{definition}
\begin{figure}[h!]
	\captionsetup[subfigure]{labelformat=empty}
	\centering
	\subfloat[$M_0$]{\begin{tikzpicture}[scale=0.9]
		\vertex[fill] (r) at (0,0) [] {};
		\vertex[fill] (r1) at (.5,.5) [] {};
		\vertex[fill] (r2) at (.5,-.5) [] {};
		\vertex[fill] (r3) at (1.5,.5) [] {};
		\vertex[fill] (r4) at (1.5,-.5) [] {};
		\vertex[fill] (r5) at (2,0) [][] {};
		\path
		(r) edge (r1)
		(r) edge (r2)
		(r1) edge (r2)
		(r1) edge (r3)
		(r1) edge (r4)
		(r2) edge (r3)
		(r2) edge (r4)
		(r3) edge (r4)
		(r5) edge (r4)
		(r3) edge (r5);
		\end{tikzpicture}}
	\quad
	\subfloat[$D_0$]{\begin{tikzpicture}[scale=0.9]
		\vertex[fill] (r) at (0,0) [] {};
		\vertex[fill] (r1) at (.5,.5) [] {};
		\vertex[fill] (r2) at (.5,-.5) [] {};
		\vertex[fill] (r3) at (1.5,.5) [] {};
		\vertex[fill] (r4) at (1.5,-.5) [] {};
		\vertex[fill] (r5) at (2,0) [] {};
		\path
		(r) edge (r1)
		(r) edge (r2)
		(r) edge (r3)
		(r) edge (r4)
		(r1) edge (r2)
		(r1) edge (r3)
		(r1) edge (r4)
		(r2) edge (r3)
		(r2) edge (r4)
		(r5) edge (r4)
		(r5) edge (r3);
		\end{tikzpicture}}
	\quad
	\subfloat[$D_1$]{\begin{tikzpicture}[scale=0.9]
		\vertex[fill] (r1) at (0,.8) [] {};
		\vertex[fill] (r2) at (.8,1) [] {};
		\vertex[fill] (r3) at (0,0) [] {};
		\vertex[fill] (r4) at (.8,-.2) [] {};
		\vertex[fill] (r5) at (1.6,.8) [] {};
		\vertex[fill] (r6) at (1.6,0) [] {};
		\vertex[fill] (r7) at (2.3,.4) [] {};
		\path
		(r5) edge (r2)
		(r1) edge (r2)
		(r1) edge (r5)
		(r1) edge (r3)
		(r1) edge (r4)
		(r2) edge (r3)
		(r2) edge (r4)
		(r3) edge (r6)
		(r4) edge (r3)
		(r4) edge (r6)
		(r5) edge (r6)
		(r7) edge (r6)
		(r5) edge (r7);
		\end{tikzpicture}}
	\quad
	\subfloat[$D_2$]{\begin{tikzpicture}[scale=0.9]
		\vertex[fill] (r1) at (0,0) [] {};
		\vertex[fill] (r2) at (0,.8) [] {};
		\vertex[fill] (r3) at (.8,1) [] {};
		\vertex[fill] (r4) at (.8,-.2) [] {};
		\vertex[fill] (r5) at (1.3,.4) [] {};
		\vertex[fill] (r6) at (2,.8) [] {};
		\vertex[fill] (r7) at (2,0) [] {};
		\vertex[fill] (r8) at (2.7,.4) [] {};
		\path
		(r5) edge (r2)
		(r1) edge (r2)
		(r1) edge (r5)
		(r1) edge (r3)
		(r1) edge (r4)
		(r2) edge (r3)
		(r2) edge (r4)
		(r3) edge (r6)
		(r4) edge (r3)
		(r4) edge (r7)
		(r5) edge (r7)
		(r5) edge (r6)
		(r6) edge (r7)
		(r7) edge (r8)
		(r6) edge (r8)  ;
		\end{tikzpicture}}
	\quad
	\subfloat[$D_4$]{\begin{tikzpicture}[scale=0.9]
		\vertex[fill] (r1) at (0,0) [] {};
		\vertex[fill] (r3) at (.5,.5) [] {};
		\vertex[fill] (r2) at (.5,-.5) [] {};
		\vertex[fill] (r5) at (1.5,.5) [] {};
		\vertex[fill] (r4) at (1.5,-.5) [] {};
		\vertex[fill] (r6) at (2.3,-.5) [] {};
		\vertex[fill] (r7) at (2.3,.5) [] {};
		\vertex[fill] (r8) at (3.3,-.5) []{};
		\vertex[fill] (r9) at (3.3,.5) [] {};
		\vertex[fill] (r10) at (3.8,0) [] {};
		\path
		(r1) edge (r2)
		(r1) edge (r3)
		(r1) edge (r4)
		(r1) edge (r5)
		(r2) edge (r3)
		(r2) edge (r4)
		(r2) edge (r5)
		(r3) edge (r4)
		(r3) edge (r5)
		(r6) edge (r4)
		(r5) edge (r7)
		(r6) edge (r7)
		(r6) edge (r8)
		(r6) edge (r9)
		(r8) edge (r7)
		(r7) edge (r9)
		(r8) edge (r9)
		(r8) edge (r10)
		(r9) edge (r10);
		\end{tikzpicture}}
	\caption{The blocks of the quartic graphs $\g_n$}\label{fig:BlockMin}
\end{figure}
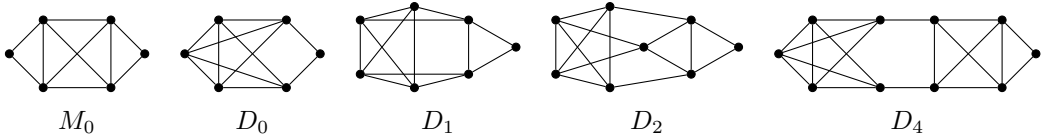

\begin{conjecture}[\cite{Abdi}]\label{conj:MinQuartic}\rm  For any $n\ge11$, the graph $\g_n$ is the unique minimal quartic graph of order $n$.
\end{conjecture}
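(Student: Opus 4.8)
\emph{Proof strategy.} The plan is to reduce the statement to a finite optimization and then settle that by a block-exchange argument. By the structure theorem for minimal quartic graphs (Theorem~\ref{thm:quarticOLD}, from \cite{Abdi}), every minimal quartic graph on $n$ vertices has the path-like shape of Figure~\ref{fig:path-like}: a linear chain of $\Theta(n)$ \emph{middle blocks}, each on a bounded number of vertices and drawn from an explicit short list $\mathcal{B}$ (with $M_0\in\mathcal{B}$), the two ends being capped by \emph{end blocks} drawn from another explicit short list. Hence, for each $n$, the minimal graphs lie among finitely many candidates, distinguished by which blocks are used and in which order; the goal is to show that this finite competition is won, uniquely, by $\g_n$.

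Fix a candidate graph $G$ of this form, write $\mu=\mu(G)$, and let $\x$ be a Fiedler vector, i.e.\ an eigenvector of $L(G)$ for $\mu$. First I would establish that, after a suitable normalization, the entries of $\x$ respect the linear order of the blocks and change sign exactly once along the chain — a monotonicity property of the same flavour as those used for cubic graphs in \cite{Imrich} and for quartic graphs in \cite{Abdi}. Inside a single block $B$ the eigenequation $(L(G)-\mu I)\x=\mathbf{0}$ is a linear recursion of bounded order, so the restriction of $\x$ to the bounded interface between $B$ and the preceding block determines its restriction to the next interface via a transfer matrix $T_B(\mu)$. Multiplying the $T_B$ along the whole chain and imposing the two end conditions yields a characteristic equation $F_G(\lambda)=0$ whose least positive root is $\mu(G)$, so that comparing algebraic connectivities becomes comparing these explicit functions.

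The core of the proof is the block-exchange comparison. Since $\mu=\Theta(1/n^2)$, each $T_B(\mu)$ is a small perturbation of a rotation and, heuristically, $\x$ behaves like a slowly oscillating sinusoid whose wavelength is governed by the block structure while the two end conditions fix the total accumulated phase; the shorter this wavelength \emph{per vertex}, the larger $\mu$. The claim to be proved is that, among all admissible middle blocks, $M_0$ produces the longest wavelength per vertex, so that a chain consisting entirely of $M_0$'s, with the two end blocks chosen to absorb the residue $r\le4$ determined by $n-11=5m+r$, minimizes $\mu$ for given $n$. Concretely, I would show that replacing any non-$M_0$ middle block — or any locally suboptimal pair of consecutive blocks — by the corresponding arrangement of $M_0$'s, readjusting the end blocks so as to preserve the ordering, strictly decreases $F_G$ at the relevant argument and hence strictly lowers $\mu$; a residue-by-residue analysis for $r=0,1,2,3,4$ then forces the end blocks to be precisely $D_0,D_1,D_2,D_4$ as in the definition of $\g_n$. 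The strictness of all these inequalities, together with a direct check of the finitely many small orders $n$ near $11$, shows that $\g_n$ is the \emph{unique} minimizer. Applying the same transfer-matrix computation to $\g_n$ itself and solving its near-sinusoidal characteristic equation asymptotically then gives $\mu(\g_n)=(1+o(1))\frac{4\pi^2}{n^2}$, which is what is needed for the $k=4$ case of the Aldous--Fill conjecture.

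I expect the block-exchange step to be the principal obstacle. Because all the relevant transfer matrices agree to within $O(1/n^2)$, the gap between competing block sequences is only second order in $1/n$, so the exchange inequalities must be proved sharply enough to hold — and to hold \emph{strictly} — for every $n\ge11$ rather than merely asymptotically; this forces one to work with exact rational-function identities for the characteristic polynomials of the finitely many candidate blocks instead of with asymptotic estimates, and to track the end-block contributions with care. A secondary difficulty is establishing the monotonicity and single sign change of the Fiedler vector in the presence of the dense local structure inside the blocks (the $K_4$'s and $K_5$'s), where the naive path intuition does not directly apply.
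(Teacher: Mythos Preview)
The statement you are attempting to prove is \emph{not} proved in the paper: it is Conjecture~\ref{conj:MinQuartic}, and the authors explicitly state that they only ``almost'' prove it. What the paper actually establishes is the weaker Theorem~\ref{thm:quarticNEW}: every minimal quartic graph has all middle blocks equal to $M_0$ and end blocks drawn from $\{D_0,D_1,D_2,D_3,D_4\}$. The paper does \emph{not} succeed in excluding $D_3$ as a possible end block, nor does it carry out the residue-by-residue argument forcing the precise pair of end blocks prescribed in the definition of $\g_n$; this is exactly the gap between Theorem~\ref{thm:quarticNEW} and the conjecture. So there is no ``paper's own proof'' to compare your attempt against, and your proposal is in effect an outline of an attack on an open problem.

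As for method, the paper's route to Theorem~\ref{thm:quarticNEW} is quite different from the transfer-matrix/characteristic-equation framework you sketch. Rather than encoding the Fiedler vector propagation by products of $T_B(\mu)$ and comparing roots of $F_G$, the paper works directly with the Rayleigh quotient: given a forbidden configuration (a long end block, or a middle block $M_1,M_2,M_3$, etc.), it replaces that subgraph by an explicit alternative, extends the Fiedler vector $\x$ to a test vector $\x'$ on the new graph, and applies Lemma~\ref{remark:delta} (the non-orthogonal Rayleigh bound) together with the eigen-equation to produce an exact rational-function inequality in $\mu$ and $n$; numerical bounds on $\mu$ from Lemma~\ref{lem:muUpBound} then finish each case. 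The monotonicity and single-sign-change property you flag as a difficulty is not re-proved here but imported from \cite{Abdi} via Remark~\ref{rem:sign}.

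Your block-exchange philosophy is broadly in the same spirit, but the concrete obstacle you would face is precisely the one the authors could not overcome: showing that the end block $D_3$ is strictly worse than the $\g_n$-prescribed choice for the relevant residues, and that the pairing of end blocks is uniquely forced. Your proposal asserts that ``a residue-by-residue analysis for $r=0,1,2,3,4$ then forces the end blocks to be precisely $D_0,D_1,D_2,D_4$,'' but gives no mechanism for this step; since the transfer matrices of the competing end blocks agree to within $O(1/n^2)$ and the differences are of the same order as $\mu$ itself, the asymptotic phase-accumulation heuristic you describe cannot by itself separate them, and an exact argument of the kind you allude to has not, to date, been found.
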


We `almost' prove Conjecture~\ref{conj:MinQuartic}  (see Theorem~\ref{thm:quarticNEW} below) by showing that in a minimal quartic graph
any middle block is $M_0$ and any end block is one of $D_0,D_1,D_2,D_4$, or possibly one additional block $D_3$; see Figure~\ref{fig:D3}.
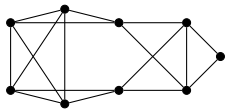
\begin{figure}[h!]
	\centering
	\begin{tikzpicture}[scale=0.9]
	\vertex[fill] (r1) at (0,1) [] {};
	\vertex[fill] (r2) at (.8,1.2) [] {};
	\vertex[fill] (r3) at (0,0) [] {};
	\vertex[fill] (r4) at (.8,-.2) [] {};
	\vertex[fill] (r5) at (1.6,1) [] {};
	\vertex[fill] (r6) at (1.6,0) [] {};
	\vertex[fill] (r7) at (2.6,1) [] {};
	\vertex[fill] (r8) at (2.6,0) [] {};
	\vertex[fill] (r9) at (3.1,.5) [] {};
	\path
	(r5) edge (r2)
	(r1) edge (r2)
	(r1) edge (r5)
	(r1) edge (r3)
	(r1) edge (r4)
	(r2) edge (r3)
	(r2) edge (r4)
	(r3) edge (r6)
	(r4) edge (r3)
	(r4) edge (r6)
	(r5) edge (r8)
	(r5) edge (r7)
	(r6) edge (r8)
	(r6) edge (r7)
	(r7) edge (r8)
	(r9) edge (r7)
	(r9) edge (r8);
	\end{tikzpicture}
	\caption{The block $D_3$}
	\label{fig:D3}
\end{figure}
 This result is enough  to prove that the minimum algebraic connectivity of connected quartic graphs of order $n$ is $(1+o(1))\frac{4\pi^2}{n^2}$  (see Corollary~\ref{corollary} below).
 This shows that, although the Aldous--Fill conjecture
does hold for $k=4$, the bound given by it (that is $(1+o(1))\frac{8\pi^2}{3n^2}$) is not tight for $4$-regular graphs (in contrast to $3$-regular graphs where the
bound is tight).

The outline of the paper is as follows. In Section~\ref{sec:g_n}, we determine the algebraic connectivity of the graphs $\g_n$ as well as those with end block $D_3$, and in Section~\ref{sec:structure},
 we prove  that indeed the minimal quartic graphs belong to this family.

\section{Minimum algebraic connectivity of quartic graphs} \label{sec:g_n}

In this section, we show that the algebraic connectivity of the quartic graphs of order $n$ with a path-like structure
whose middle blocks are  all $M_0$ and end blocks are from $D_0,\ldots,D_4$ is  $(1+o(1))\frac{4\pi^2}{n^2}$.
In the next section, we will prove that minimal quartic graphs belong to the family described above.
From these two results, it readily follows that $(1+o(1))\frac{4\pi^2}{n^2}$ is indeed the minimum algebraic connectivity of connected quartic graphs of order $n$.

Let $G$ be a graph of order $n$  and $L(G)=\Delta-A$ be its Laplacian matrix.
It is well-known that $L(G)\1^\top=\bf0^\top$, where $\bf1$ is the all-$1$ vector,\footnote{We treat vectors as ``row vectors.''}
and $0$ is in fact the smallest eigenvalue of $L(G)$.
The second smallest eigenvalue $\mu(G)$ of $L(G)$ is the algebraic connectivity of $G$. %, denoted by  , where
The reason for this name is the well-known fact that $\mu(G)>0$ if and only if $G$ is connected.
 It is also known that
\begin{equation}\label{eq}
\mu(G)=\min_{\x\ne\bf0,\,\x\perp\bf1}\frac{\x L(G)\x^\top}{\x\x^\top}.
\end{equation}
 An eigenvector corresponding to $\mu(G)$ is known as a {\em Fiedler vector} of $G$.
In passing, we note that if $\x=(x_1,\ldots,x_n)$, then
\begin{equation}\label{eq2}
\x L(G)\x^\top=\sum_{ij\in E(G)}(x_i-x_j)^2,
\end{equation}
where $E(G)$ is the edge set of $G$. Recall that if $\x$ is an eigenvector corresponding to $\mu$, then for any vertex $i$ with degree $d_i$,
\begin{equation}\label{eq:eigenvector}
\mu x_i=d_ix_i-\sum_{j:\,ij\in E(G)}x_j.
\end{equation}
We refer to \eqref{eq:eigenvector} as the {\em eigen-equation}.

 By \eqref{eq}, $\mu(G)$ is determined by the vectors orthogonal to $\1$. The other vectors, however, can also be used to obtain potentially good upper bounds for $\mu(G)$. This is formulated in the following key lemma.
\begin{lemma}\label{remark:delta}
Let $G$ be a graph of order $n$, $\x$ an arbitrary vector of length $n$ which is not a multiple of $\1$ and $\delta:=\x\1^\top$. Then
$$\mu(G)\le\frac{\x L(G)\x^\top}{\|\x\|^2-\frac{\delta^2}n}.$$
\end{lemma}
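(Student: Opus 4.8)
The plan is to reduce the statement to the variational characterization \eqref{eq} by orthogonally projecting $\x$ onto the hyperplane $\1^\perp$. Write $\x = \y + \frac{\delta}{n}\1$ where $\y := \x - \frac{\delta}{n}\1$ is the component of $\x$ orthogonal to $\1$; indeed $\y\1^\top = \x\1^\top - \frac{\delta}{n}\1\1^\top = \delta - \delta = 0$. Since $\x$ is not a multiple of $\1$, we have $\y \ne \bf0$, so $\y$ is a legitimate competitor in the minimization \eqref{eq}, giving $\mu(G) \le \frac{\y L(G)\y^\top}{\y\y^\top}$.

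Next I would compute the two pieces of this Rayleigh quotient in terms of $\x$ and $\delta$. For the numerator, observe that $L(G)\1^\top = \bf0^\top$ (equivalently $\1 L(G) = \bf0$), so $\y L(G)\y^\top = (\x - \frac{\delta}{n}\1)L(G)(\x - \frac{\delta}{n}\1)^\top = \x L(G)\x^\top$, because every term involving $\1$ against $L(G)$ vanishes. For the denominator, $\y\y^\top = \|\x\|^2 - 2\frac{\delta}{n}\x\1^\top + \frac{\delta^2}{n^2}\1\1^\top = \|\x\|^2 - \frac{2\delta^2}{n} + \frac{\delta^2}{n} = \|\x\|^2 - \frac{\delta^2}{n}$. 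Substituting these into the bound from \eqref{eq} yields exactly $\mu(G) \le \frac{\x L(G)\x^\top}{\|\x\|^2 - \delta^2/n}$.

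One small point worth checking is that the denominator $\|\x\|^2 - \delta^2/n$ is strictly positive, so the inequality is meaningful: this is just the Cauchy--Schwarz inequality $\delta^2 = (\x\1^\top)^2 \le \|\x\|^2\|\1\|^2 = n\|\x\|^2$, with equality precisely when $\x$ is a multiple of $\1$, which is excluded by hypothesis.

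There is essentially no obstacle here — the lemma is a routine application of the Courant--Fischer/Rayleigh-quotient principle together with the observation that $\1$ spans the kernel of $L(G)$, so shifting $\x$ by a multiple of $\1$ changes neither the quadratic form $\x L(G)\x^\top$ (by \eqref{eq2}, each edge difference $x_i - x_j$ is invariant under adding a constant to all coordinates) nor moves us out of the admissible set after projection. The only thing to be careful about is bookkeeping in the denominator expansion and recording the non-degeneracy of $\x$; everything else is immediate.
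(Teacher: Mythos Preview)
Your proof is correct and follows essentially the same approach as the paper: project $\x$ onto $\1^\perp$ by setting $\y=\x-\frac{\delta}{n}\1$, use $L(G)\1^\top=\mathbf{0}^\top$ to see that the numerator is unchanged, and expand $\|\y\|^2$ directly. Your additional remark that $\|\x\|^2-\delta^2/n>0$ by Cauchy--Schwarz is a nice touch that the paper leaves implicit.
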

\begin{proof}
Let  $\y=\x-\frac\delta n\1$. Then $\y\perp\1$ and so
$\mu(G)\le\frac{\y L(G)\y^\top}{\|\y\|^2}$.
As $L(G)\1^\top=\bf0^\top$, we have $\y L(G)\y^\top=\x L(G)\x^\top$.
Furthermore,
$$\|\y\|^2=\|\x\|^2-\frac{2\delta} n\x\1^\top+\frac{\delta^2}n=\|\x\|^2-\frac{\delta^2}n,$$
and thus the result follows.
\end{proof}

Let $\Pi=\{C_1,  \ldots , C_p\}$ be a partition of the vertex set $V(G)$.
Then $\Pi$ is called an {\em equitable partition} of $G$ if for all $i,j$ (possibly $i=j$),
the number of neighbors of any vertex of $C_i$ in $C_j$ only depends on $i$ and $j$.

\begin{definition}\label{def:fit}\rm
Let $D$ and $D'$ be two graphs. We say that $D'$ {\em fits} $D$ if
\begin{itemize}
	\item[(i)] $D$ has an equitable partition  $\Pi=\{C_1,  \ldots , C_p\}$ and $V(D')$ has a partion  $\Pi'=\{C'_1,  \ldots , C'_p\}$ (not necessarily equitable);
	\item[(ii)] 	$|C_i|\le|C'_i|$ for $i=1,\ldots,p$;
	\item[(iii)] for $1\le i<j\le p$, the number of edges between $C_i$ and $C_j$ in $D$ is the same as the number of edges between $C'_i$ and $C'_j$ in $D'$.
\end{itemize}
\end{definition}

In Table~\ref{tab}, we illustrate the blocks from $D_1,\ldots,D_4$ which fit each of  $D_0,\ldots,D_3$.
For instance, the first row shows that each of $D_1,\ldots,D_4$ fit $D_0$.
The corresponding partitions $\Pi$ and $\Pi'$ are specified by dashed lines.
\begin{table}[h!]
	\centering
	\begin{tabular}{l|c}
		\hline
		The partition $\Pi$ &   The partition $\Pi'$ \\ \hline
		$D_0$\quad$\begin{array}{c}\begin{tikzpicture}[scale=0.7]
		\tikzstyle{vertex}=[circle, draw, inner sep=0pt, minimum size=2pt]
		\vertex[fill] (r) at (0,0) [] {};
		\vertex[fill] (r1) at (.5,.5) [] {};
		\vertex[fill] (r2) at (.5,-.5) [] {};
		\vertex[fill] (r3) at (1.5,.5) [] {};
		\vertex[fill] (r4) at (1.5,-.5) [] {};
		\vertex[fill] (r5) at (2,0) [] {};
		\path
		(r) edge (r1)
		(r) edge (r2)
		(r) edge (r3)
		(r) edge (r4)
		(r1) edge (r2)
		(r1) edge (r3)
		(r1) edge (r4)
		(r2) edge (r3)
		(r2) edge (r4)
		(r5) edge (r4)
		(r5) edge (r3);
		\draw[densely dashed]  plot[smooth, tension=1] coordinates {(-.2,0) (.6,.7) (.6,-.7) (-.2,-.1) };
		\draw[densely dashed] (1.5,0) ellipse (.15 and .8);
		\draw[densely dashed] (2,0) ellipse (.15 and .28);
		\end{tikzpicture} \end{array}$
		&
		$\begin{array}{cccc}
		\begin{tikzpicture}[scale=0.7]
		\tikzstyle{vertex}=[circle, draw, inner sep=0pt, minimum size=2pt]
		\vertex[fill] (r1) at (0,.8) [] {};
		\vertex[fill] (r2) at (.8,1) [] {};
		\vertex[fill] (r3) at (0,0) [] {};
		\vertex[fill] (r4) at (.8,-.2) [] {};
		\vertex[fill] (r5) at (1.6,.8) [] {};
		\vertex[fill] (r6) at (1.6,0) [] {};
		\vertex[fill] (r7) at (2.3,.4) [] {};
		\path
		(r1) edge (r2)
		(r1) edge (r3)
		(r2) edge (r3)
		(r4) edge (r6)
		(r5) edge (r6);
		\path
		(r5) edge (r2)		
		(r1) edge (r5)	
		(r1) edge (r4)	
		(r2) edge (r4)
		(r3) edge (r6)
		(r4) edge (r3)
		(r7) edge (r6)
		(r5) edge (r7)  ;
		\draw[densely dashed]  plot[smooth, tension=1] coordinates {(-.2,.9) (1,1.1) (0,-.2) (-.2,.9) };
		\draw[densely dashed]  plot[smooth, tension=1] coordinates {(.6,-.3) (1.5,1) (1.7,-.1) (.6,-.3) };
		\draw[densely dashed] (2.3,.4) ellipse (.15 and .28);
		\end{tikzpicture}
		&
		\begin{tikzpicture}[scale=0.7]
		\tikzstyle{vertex}=[circle, draw, inner sep=0pt, minimum size=2pt]
		\vertex[fill] (r1) at (0,0) [] {};
		\vertex[fill] (r2) at (0,.8) [] {};
		\vertex[fill] (r3) at (.8,1) [] {};
		\vertex[fill] (r4) at (.8,-.2) [] {};
		\vertex[fill] (r5) at (1.3,.4) [] {};
		\vertex[fill] (r6) at (2,.8) [] {};
		\vertex[fill] (r7) at (2,0) [] {};
		\vertex[fill] (r8) at (2.7,.4) [] {};
		\path
		(r1) edge (r2)
		(r1) edge (r3)
		(r2) edge (r3)
		(r4) edge (r7)
		(r5) edge (r7)
		(r6) edge (r7)
		(r5) edge (r6);
		\path  	
		(r5) edge (r2)		
		(r1) edge (r5)		
		(r1) edge (r4)		
		(r2) edge (r4)
		(r3) edge (r6)
		(r4) edge (r3) 	
		(r7) edge (r8)
		(r6) edge (r8)  ;
		\draw[densely dashed]  plot[smooth, tension=1] coordinates {(-.2,.9) (1,1.1) (0,-.2) (-.2,.9) };
		\draw[densely dashed]  plot[smooth, tension=1] coordinates {(.6,-.3)(1.1,.4) (2,1) (2.1,-.1) (.6,-.3) };
		\draw[densely dashed] (2.7,.4) ellipse (.15 and .28);
		\end{tikzpicture}
		&
		\begin{tikzpicture}[scale=0.7]
		\tikzstyle{vertex}=[circle, draw, inner sep=0pt, minimum size=2pt]
		\vertex[fill] (r1) at (0,1) [] {};
		\vertex[fill] (r2) at (.8,1.2) [] {};
		\vertex[fill] (r3) at (0,0) [] {};
		\vertex[fill] (r4) at (.8,-.2) [] {};
		\vertex[fill] (r5) at (1.6,1) [] {};
		\vertex[fill] (r6) at (1.6,0) [] {};
		\vertex[fill] (r7) at (2.6,1) [] {};
		\vertex[fill] (r8) at (2.6,0) [] {};
		\vertex[fill] (r9) at (3.1,.5) [] {};
		\tikzstyle{vertex}=[circle, draw, inner sep=0pt, minimum size=0pt]
		\vertex (s) at (3.3,.7) [] {};
		\vertex (ss) at (3.3,.3) [] {};
		\path
		(r1) edge (r2)
		(r2) edge (r3)
		(r1) edge (r3)
		(r4) edge (r6)
		(r5) edge (r8)
		(r5) edge (r7)
		(r6) edge (r8)
		(r6) edge (r7)
		(r7) edge (r8);
		\path
		(r5) edge (r2)		
		(r1) edge (r5)		
		(r1) edge (r4)		
		(r2) edge (r4)
		(r3) edge (r6)
		(r4) edge (r3)	
		(r9) edge (r7)
		(r9) edge (r8);
		\draw[densely dashed]  plot[smooth, tension=1] coordinates {(-.2,1.1) (1,1.25) (0,-.2) (-.2,1.1) };
		\draw[densely dashed] (.55,-.35)--(1.5,1.15)--(2.75,1.15)--(2.75,-.15)--(.55,-.35) ;
		\draw[densely dashed] (3.1,.5) ellipse (.15 and .28);
		\end{tikzpicture}
	&
		\begin{tikzpicture}[scale=0.7]
		\tikzstyle{vertex}=[circle, draw, inner sep=0pt, minimum size=2pt]
		\vertex[fill] (r1) at (0,0) [] {};
		\vertex[fill] (r3) at (.5,.5) [] {};
		\vertex[fill] (r2) at (.5,-.5) [] {};
		\vertex[fill] (r5) at (1.5,.5) [] {};
		\vertex[fill] (r4) at (1.5,-.5) [] {};
		\vertex[fill] (r6) at (2.3,-.5) [] {};
		\vertex[fill] (r7) at (2.3,.5) [] {};
		\vertex[fill] (r8) at (3.3,-.5) []{};
		\vertex[fill] (r9) at (3.3,.5) [] {};
		\vertex[fill] (r10) at (3.8,0) [] {};
		\path
		(r1) edge (r2)
		(r1) edge (r3)
		(r2) edge (r3)
		(r6) edge (r4)
		(r5) edge (r7)
		(r6) edge (r7)
		(r6) edge (r8)
		(r6) edge (r9)
		(r8) edge (r7)
		(r7) edge (r9)
		(r8) edge (r9);
		\path
		(r1) edge (r4)
		(r1) edge (r5)		
		(r2) edge (r4)
		(r2) edge (r5)
		(r3) edge (r4)
		(r3) edge (r5)	
		(r8) edge (r10)
		(r9) edge (r10);
		\draw[densely dashed]  plot[smooth, tension=1] coordinates {(-.2,0) (.6,.7) (.6,-.7) (-.2,0) };
		\draw[densely dashed]  (1.3,.65) rectangle (3.45,-.65) ;
		\draw[densely dashed] (3.8,0) ellipse (.15 and .28);
		\end{tikzpicture}\end{array}$
		\\ \hline\vspace{-.25cm}\\
		$D_1$\quad$\begin{array}{c}\begin{tikzpicture}[scale=0.7]
		\tikzstyle{vertex}=[circle, draw, inner sep=0pt, minimum size=2pt]
		\vertex[fill] (r1) at (0,.8) [] {};
		\vertex[fill] (r2) at (.8,1) [] {};
		\vertex[fill] (r3) at (0,0) [] {};
		\vertex[fill] (r4) at (.8,-.2) [] {};
		\vertex[fill] (r5) at (1.6,.8) [] {};
		\vertex[fill] (r6) at (1.6,0) [] {};
		\vertex[fill] (r7) at (2.3,.4) [] {};
		\path
		(r1) edge (r2)
		(r1) edge (r3)
		(r2) edge (r3)
		(r4) edge (r6)
		(r5) edge (r6);
		\path
		(r5) edge (r2)		
		(r1) edge (r5)	
		(r1) edge (r4)	
		(r2) edge (r4)
		(r3) edge (r6)
		(r4) edge (r3)
		(r7) edge (r6)
		(r5) edge (r7)  ;
		\draw[densely dashed]  (-.16,.95)-- (.95,1.2) -- (.95,-.4)--(-.16,-.1)--(-.16,.95) ;
		\draw[densely dashed] (1.6,.4) ellipse (.15 and .6);
		\draw[densely dashed] (2.3,.4) ellipse (.15 and .28);
		\end{tikzpicture}\end{array}$
		&  $\begin{array}{ccccc}
		\begin{tikzpicture}[scale=0.7]
		\tikzstyle{vertex}=[circle, draw, inner sep=0pt, minimum size=2pt]
		\vertex[fill] (r1) at (0,0) [] {};
		\vertex[fill] (r2) at (0,.8) [] {};
		\vertex[fill] (r3) at (.8,1) [] {};
		\vertex[fill] (r4) at (.8,-.2) [] {};
		\vertex[fill] (r5) at (1.3,.4) [] {};
		\vertex[fill] (r6) at (2,.8) [] {};
		\vertex[fill] (r7) at (2,0) [] {};
		\vertex[fill] (r8) at (2.7,.4) [] {};
		\path
		(r1) edge (r2)
		(r1) edge (r3)
		(r2) edge (r3)
		(r1) edge (r4)		
		(r2) edge (r4)
		(r4) edge (r3) 	
		(r5) edge (r7)
		(r6) edge (r7)
		(r5) edge (r6);
		\path  	
		(r5) edge (r2)		
		(r1) edge (r5)		
		(r4) edge (r7)
		(r3) edge (r6)	   	
		(r7) edge (r8)
		(r6) edge (r8)  ;
		\draw[densely dashed]  (-.16,.95)-- (.95,1.2) -- (.95,-.4)--(-.16,-.1)--(-.16,.95) ;
		\draw[densely dashed]  plot[smooth, tension=1] coordinates {(1.1,.4) (2.1,.9) (2.1,-.1) (1.1,.4) };
		\draw[densely dashed] (2.7,.4) ellipse (.15 and .28);
		\end{tikzpicture}
		&&
		\begin{tikzpicture}[scale=0.7]
		\tikzstyle{vertex}=[circle, draw, inner sep=0pt, minimum size=2pt]
		\vertex[fill] (r1) at (0,1) [] {};
		\vertex[fill] (r2) at (.8,1.2) [] {};
		\vertex[fill] (r3) at (0,0) [] {};
		\vertex[fill] (r4) at (.8,-.2) [] {};
		\vertex[fill] (r5) at (1.6,1) [] {};
		\vertex[fill] (r6) at (1.6,0) [] {};
		\vertex[fill] (r7) at (2.6,1) [] {};
		\vertex[fill] (r8) at (2.6,0) [] {};
		\vertex[fill] (r9) at (3.1,.5) [] {};
		\path
		(r1) edge (r2)
		(r2) edge (r3)
		(r1) edge (r3)
		(r2) edge (r4)
		(r4) edge (r3)	
		(r1) edge (r4)	
		(r5) edge (r8)
		(r5) edge (r7)
		(r6) edge (r8)
		(r6) edge (r7)
		(r7) edge (r8);
		\path
		(r5) edge (r2)		
		(r1) edge (r5)		     		
		(r3) edge (r6)
		(r4) edge (r6)
		(r9) edge (r7)
		(r9) edge (r8);
		\draw[densely dashed]  (-.16,1.1)-- (.95,1.4) -- (.95,-.4)--(-.16,-.1)--(-.16,1.1) ;
		\draw[densely dashed] (1.43,-.15)--(1.43,1.15)--(2.75,1.15)--(2.75,-.15)--(1.43,-.15) ;
		\draw[densely dashed] (3.1,.5) ellipse (.15 and .28);
		\end{tikzpicture}
	&&
	\begin{tikzpicture}[scale=0.7]
		\tikzstyle{vertex}=[circle, draw, inner sep=0pt, minimum size=2pt]
		\vertex[fill] (r1) at (0,0) [] {};
		\vertex[fill] (r3) at (.5,.5) [] {};
		\vertex[fill] (r2) at (.5,-.5) [] {};
		\vertex[fill] (r5) at (1.5,.5) [] {};
		\vertex[fill] (r4) at (1.5,-.5) [] {};
		\vertex[fill] (r6) at (2.3,-.5) [] {};
		\vertex[fill] (r7) at (2.3,.5) [] {};
		\vertex[fill] (r8) at (3.3,-.5) []{};
		\vertex[fill] (r9) at (3.3,.5) [] {};
		\vertex[fill] (r10) at (3.8,0) [] {};
		\path
		(r1) edge (r2)
		(r1) edge (r3)
		(r2) edge (r3)	        	
		(r6) edge (r7)
		(r6) edge (r8)
		(r6) edge (r9)
		(r8) edge (r7)
		(r7) edge (r9)
		(r8) edge (r9) 	
		(r6) edge (r4)
		(r1) edge (r5)				
		(r2) edge (r5)		
		(r3) edge (r5)		;
		\path	
		(r3) edge (r4)
		(r1) edge (r4)
		(r2) edge (r4)	
		(r5) edge (r7)	
		(r8) edge (r10)
		(r9) edge (r10);
		\draw[densely dashed]  plot[smooth, tension=1] coordinates {(-.2,0) (.6,.7)(1.63,.45) (.52,-.65) (-.2,0) };
		\draw[densely dashed]  (2.2,.65)--  (3.45,.65)--(3.45,-.65)--(1.2,-.65)--(2.2,.65) ;
		\draw[densely dashed] (3.8,0) ellipse (.15 and .28);	
		\end{tikzpicture}\end{array}$
		\\ \hline\vspace{-.25cm}\\
		$D_2$\quad$\begin{array}{c}\begin{tikzpicture}[scale=0.7]
		\tikzstyle{vertex}=[circle, draw, inner sep=0pt, minimum size=2pt]
		\vertex[fill] (r1) at (0,0) [] {};
		\vertex[fill] (r2) at (0,.8) [] {};
		\vertex[fill] (r3) at (.8,1) [] {};
		\vertex[fill] (r4) at (.8,-.2) [] {};
		\vertex[fill] (r5) at (1.3,.4) [] {};
		\vertex[fill] (r6) at (2,.8) [] {};
		\vertex[fill] (r7) at (2,0) [] {};
		\vertex[fill] (r8) at (2.7,.4) [] {};
		\path
		(r1) edge (r2)
		(r1) edge (r3)
		(r2) edge (r3)
		(r1) edge (r4)		
		(r2) edge (r4)
		(r4) edge (r3) 	
		(r5) edge (r7)
		(r6) edge (r7)
		(r5) edge (r6)	   	
		(r5) edge (r2)		
		(r1) edge (r5)		
		(r4) edge (r7)
		(r3) edge (r6)	   	
		(r7) edge (r8)
		(r6) edge (r8)  ;	
		\draw[densely dashed] (0,.4) ellipse (.15 and .6);
		\draw[densely dashed] (.8,.4) ellipse (.16 and .81);
		\draw[densely dashed] (1.3,.4) ellipse (.15 and .28);
		\draw[densely dashed] (2,.4) ellipse (.15 and .6);
		\draw[densely dashed] (2.7,.4) ellipse (.15 and .28);
		\end{tikzpicture}\end{array}$
		&
		$\begin{array}{c}\begin{tikzpicture}[scale=0.7]
		\tikzstyle{vertex}=[circle, draw, inner sep=0pt, minimum size=2pt]
		\vertex[fill] (r1) at (0,1) [] {};
		\vertex[fill] (r2) at (.8,1.2) [] {};
		\vertex[fill] (r3) at (0,0) [] {};
		\vertex[fill] (r4) at (.8,-.2) [] {};
		\vertex[fill] (r5) at (1.6,1) [] {};
		\vertex[fill] (r6) at (1.6,0) [] {};
		\vertex[fill] (r7) at (2.6,1) [] {};
		\vertex[fill] (r8) at (2.6,0) [] {};
		\vertex[fill] (r9) at (3.1,.5) [] {};
		\path
		(r1) edge (r2)
		(r4) edge (r3)	       	
		(r6) edge (r8)
		(r6) edge (r7)
		(r7) edge (r8);
		\path
		(r5) edge (r8)
		(r5) edge (r7)
		(r4) edge (r6)
		(r1) edge (r3)
		(r5) edge (r2)		
		(r1) edge (r5)		
		(r1) edge (r4)		
		(r2) edge (r4)
		(r3) edge (r6)
		(r2) edge (r3)
		(r9) edge (r7)
		(r9) edge (r8);
		\draw[densely dashed, rotate={15}] (.7,.94) ellipse (.6 and .2);
		\draw[densely dashed,rotate={-15}] (.4,0) ellipse (.6 and .2);
		\draw[densely dashed]  plot[smooth, tension=1] coordinates {(1.4,-.1)(2.52,1.16)(2.65,-.15)(1.43,-.1) };
		\draw[densely dashed] (3.1,.5) ellipse (.15 and .28);
		\draw[densely dashed] (1.6,1) ellipse (.15 and .28);
		\end{tikzpicture}\end{array}$
		\\ \hline\vspace{-.25cm}\\
		$D_3$\quad$\begin{array}{c}\begin{tikzpicture}[scale=0.7]
		\tikzstyle{vertex}=[circle, draw, inner sep=0pt, minimum size=2pt]
		\vertex[fill] (r1) at (0,1) [] {};
		\vertex[fill] (r2) at (.8,1.2) [] {};
		\vertex[fill] (r3) at (0,0) [] {};
		\vertex[fill] (r4) at (.8,-.2) [] {};
		\vertex[fill] (r5) at (1.6,1) [] {};
		\vertex[fill] (r6) at (1.6,0) [] {};
		\vertex[fill] (r7) at (2.6,1) [] {};
		\vertex[fill] (r8) at (2.6,0) [] {};
		\vertex[fill] (r9) at (3.1,.5) [] {};
		\path
		(r1) edge (r2)
		(r2) edge (r3)
		(r1) edge (r3)
		(r4) edge (r6)
		(r5) edge (r8)
		(r5) edge (r7)
		(r6) edge (r8)
		(r6) edge (r7)
		(r7) edge (r8)
		(r5) edge (r2)		
		(r1) edge (r5)		
		(r1) edge (r4)		
		(r2) edge (r4)
		(r3) edge (r6)
		(r4) edge (r3)	
		(r9) edge (r7)
		(r9) edge (r8);
		\draw[densely dashed]  (-.16,1.1)-- (.95,1.4) -- (.95,-.4)--(-.16,-.1)--(-.16,1.1) ;	
		\draw[densely dashed] (1.6,.5) ellipse (.15 and .7);
		\draw[densely dashed] (2.6,.5) ellipse (.15 and .7);
		\draw[densely dashed] (3.1,.5) ellipse (.15 and .28);
		\end{tikzpicture}\end{array}$
		&
		$\begin{array}{c}\begin{tikzpicture}[scale=0.7]
		\tikzstyle{vertex}=[circle, draw, inner sep=0pt, minimum size=2pt]
		\vertex[fill] (r1) at (0,0) [] {};
		\vertex[fill] (r3) at (.5,.5) [] {};
		\vertex[fill] (r2) at (.5,-.5) [] {};
		\vertex[fill] (r5) at (1.5,.5) [] {};
		\vertex[fill] (r4) at (1.5,-.5) [] {};
		\vertex[fill] (r6) at (2.3,-.5) [] {};
		\vertex[fill] (r7) at (2.3,.5) [] {};
		\vertex[fill] (r8) at (3.3,-.5) []{};
		\vertex[fill] (r9) at (3.3,.5) [] {};
		\vertex[fill] (r10) at (3.8,0) [] {};
		\path
		(r8) edge (r7)
		(r7) edge (r9)
		(r3) edge (r4)
		(r1) edge (r4)
		(r2) edge (r4)
		(r5) edge (r7)	
		(r8) edge (r10)
		(r9) edge (r10)
		(r6) edge (r8)
		(r6) edge (r9)	;
		\path
		(r1) edge (r2)
		(r1) edge (r3)
		(r2) edge (r3)	        	
		(r6) edge (r7)					 	
		(r6) edge (r4)
		(r1) edge (r5)				
		(r2) edge (r5)		
		(r3) edge (r5)	
		(r8) edge (r9)  ;	
		\draw[densely dashed]  plot[smooth, tension=1] coordinates {(-.2,0) (.6,.7)(1.63,.45) (.52,-.65) (-.2,0) };
		\draw[densely dashed]  (2.2,.65)--  (2.45,.65)--(2.45,-.65)--(1.2,-.65)--(2.2,.65) ;
		\draw[densely dashed] (3.8,0) ellipse (.15 and .28);	
		\draw[densely dashed] (3.3,0) ellipse (.15 and .7);	
		\end{tikzpicture}\end{array}$
		\\ 		\hline
	\end{tabular}
	\caption{Blocks from $D_1,\ldots,D_4$ which fit each of $D_0,\ldots,D_3$}\label{tab}
\end{table}

The next lemma captures the effect of replacing a block $D$ by another block $D'$  (which fits $D$) on the algebraic connectivity.

\begin{lemma}\label{lem:muGnEndBlock}
	Let $G$ be a graph and $D$ be an end block of $G$ attaching to the rest of the graph through a cut vertex $v$.
		Let $D$ have an equitable partition $\Pi$ such that the componenets of a Fiedler vector $\x$ of $G$ on each cell of $\Pi$  are equal.
		If we replace $D$ by another block $D'$ which fits $D$ and $v\in C_p\cap C'_p$, then for the resulting graph $G'$, we have $\mu(G')\le\mu(G)$.	
\end{lemma}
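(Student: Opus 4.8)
The plan is to use Lemma~\ref{remark:delta} with a cleverly chosen test vector built from the Fiedler vector $\x$ of $G$. Let $\Pi=\{C_1,\ldots,C_p\}$ be the equitable partition of $D$ in the hypothesis, and let $a_1,\ldots,a_p$ be the common values of $\x$ on the cells $C_1,\ldots,C_p$ (so in particular $a_p=x_v$, the value at the cut vertex). The idea is to form $\x'$ on $G'$ by keeping $\x$ unchanged on $V(G)\setminus V(D)$ and, on the copy $D'$ of $D$, setting every vertex in $C'_i$ equal to $a_i$. I would then estimate the Rayleigh-type quotient in Lemma~\ref{remark:delta} for $\x'$ and compare it to $\mu(G)$.

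First I would compute the numerator $\x' L(G')\x'^\top$ using \eqref{eq2}, i.e.\ as a sum of $(x'_i-x'_j)^2$ over edges of $G'$. Edges lie in one of three parts: (a) edges of $G$ not meeting $D$ (except possibly the cut edge at $v$) — these contribute exactly as in $\x L(G)\x^\top$; (b) edges inside $D'$; (c) the edge joining $v$ to the rest of $G'$, which is identical in $G$ and $G'$ since $v\in C_p\cap C'_p$ keeps the same value $a_p$. For part (b), the key point is condition (iii) in the definition of ``fits'': the number of edges of $D'$ between $C'_i$ and $C'_j$ equals the number of edges of $D$ between $C_i$ and $C_j$ for $i<j$, and edges inside a single cell $C'_i$ contribute $0$ because all of $C'_i$ gets the constant value $a_i$. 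Hence the contribution of $D'$ to $\x' L(G')\x'^\top$ is $\sum_{i<j} e_{ij}(a_i-a_j)^2$, where $e_{ij}$ is the number of $D$-edges between $C_i$ and $C_j$; and because $\Pi$ is equitable and $\x$ is constant on each cell, this is precisely the contribution of $D$ to $\x L(G)\x^\top$. Therefore $\x' L(G')\x'^\top=\x L(G)\x^\top=\mu(G)\|\x\|^2$, using that $\x$ is a Fiedler vector of $G$.

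Next I would handle the denominator. Since $|C'_i|\ge|C_i|$ by condition (ii), the squared norm only grows: $\|\x'\|^2=\|\x\|^2+\sum_i(|C'_i|-|C_i|)a_i^2\ge\|\x\|^2$. Setting $\delta:=\x'\1^\top$, Lemma~\ref{remark:delta} gives $\mu(G')\le \dfrac{\x' L(G')\x'^\top}{\|\x'\|^2-\delta^2/|V(G')|}$. The one subtlety is that $\x$ need not be orthogonal to $\1$ after the modification, so $\delta$ may be nonzero; but this only helps, since we are subtracting a nonnegative quantity from the denominator in the direction that makes the bound \emph{larger}, so I must argue the comparison carefully. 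The cleanest route is to instead apply Lemma~\ref{remark:delta} to $\x'$ directly and separately note that the original $\x\perp\1$ on $G$ gives $\x\1^\top=0$; then $\delta=\x'\1^\top=\sum_i(|C'_i|-|C_i|)a_i$, which may be nonzero, and one needs $\|\x'\|^2-\delta^2/|V(G')|\ge\|\x\|^2$. This is the point I expect to be the main obstacle: in general it need not hold for an arbitrary fitting, so either the lemma implicitly assumes the Fiedler vector can be chosen with $a_i$ satisfying a sign/balance condition (e.g.\ the cells added are where $\x$ is small, as happens for the path-like graphs in question), or one argues that $\delta$ is small relative to the number of added vertices. I would resolve it by invoking the structure of the graphs in play — the blocks in Table~\ref{tab} are designed so that the enlarged cells carry values $a_i$ that are either $0$ or cancel, making $\delta=0$ or negligible — and then the chain $\mu(G')\le \x'L(G')\x'^\top/(\|\x'\|^2-\delta^2/|V(G')|)\le \mu(G)\|\x\|^2/\|\x\|^2=\mu(G)$ closes the argument.
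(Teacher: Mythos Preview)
Your construction of the test vector $\x'$ and the verification that $\x' L(G')\x'^\top=\x L(G)\x^\top$ are exactly right, and you correctly identify that the crux is the inequality $\|\x'\|^2-\delta^2/n'\ge\|\x\|^2$ (with $n'=|V(G')|$). But you then treat this inequality as something that ``need not hold for an arbitrary fitting'' and propose to rescue it by appealing to the special structure of the blocks in Table~\ref{tab}. This is where you go wrong: the inequality \emph{does} hold in full generality, and the lemma as stated needs no extra hypothesis.

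Write $c_i=|C_i|$, $c'_i=|C'_i|$. Since $\x\perp\1$, you have $\delta=\sum_i(c'_i-c_i)a_i$ and $\|\x'\|^2-\|\x\|^2=\sum_i(c'_i-c_i)a_i^2$. Because $c'_i-c_i\ge0$ for every $i$ (condition~(ii) of ``fits''), the Cauchy--Schwarz inequality with weights $c'_i-c_i$ gives
\[
\Bigl(\sum_i(c'_i-c_i)a_i\Bigr)^2\le\Bigl(\sum_i(c'_i-c_i)\Bigr)\Bigl(\sum_i(c'_i-c_i)a_i^2\Bigr)\le n'\sum_i(c'_i-c_i)a_i^2,
\]
since $\sum_i(c'_i-c_i)=n'-n\le n'$. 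This is exactly $\delta^2/n'\le\|\x'\|^2-\|\x\|^2$, i.e.\ the needed inequality. With this in hand, Lemma~\ref{remark:delta} yields $\mu(G')\le\mu(G)$ directly, with no recourse to cancellation or smallness of $\delta$. This Cauchy--Schwarz step is the one idea your proposal is missing; everything else you wrote matches the paper's argument.
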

\begin{proof} Let $\Pi$ and $\Pi'$ be as in Definition~\ref{def:fit} and $d_{ij}$ be the number of edges between $C_i$ and $C_j$ for	$1\le i<j\le p$ which is the same for  $C'_i$ and $C'_j$.
	Let $\x=( x_1,\ldots, x_n)$. % be a Fiedler vector of $G$.
	By the assumption, the components of $\x$ on each $C_i$ are equal. Suppose that $a_1,\ldots,a_p$ are  the values taken by $\x$ on these components.
	It turns out that
	$$\sum_{ij\in E(D)} (x_i-x_j)^2=\sum_{1\le i<j\le p} d_{ij}(a_i-a_j)^2.$$
	Let $H=G\setminus(D-v)$.
	Now, we define a vector $\x'$ (with length $n'$) on $G'$ as follows:
	for $i=1,\ldots,p$, on each $C'_i$, all the components of $\x'$ are equal to $a_i$, and on $H$, $\x'$ is the same as $\x$.
	Although $v$ belongs to both $D'$ and $H$, the component of $\x'$ on $v$ is well-defined and equals to $a_p$ (which  is guaranteed by the assumption that $v$ belongs to both $C_p$ and $C'_p$).
	 It follows that
	$$\sum_{ij\in E(D')} (x'_i-x'_j)^2=\sum_{1\le i<j\le p} d_{ij}(a_i-a_j)^2,$$
	which in turn implies that
	$$\x' L(G')\x'^\top=\sum_{ij\in E(G')} (x'_i-x'_j)^2=\sum_{ij\in E(G)} (x_i-x_j)^2=\x L(G)\x^\top.$$
	
	Let $c_i=|C_i|$ and $c'_i=|C'_i|$. By the property (ii) of Definition~\ref{def:fit}, $c'_i\ge c_i$.
	Since $\x\1^\top=0$, it is seen that $\delta=\x'\1^\top=\sum_{i=1}^p(c'_i-c_i)a_i$.
	Moreover,
	$$\|\x'\|^2-\|\x\|^2=\sum_{i=1}^p(c'_i-c_i)a_i^2,$$
	and so
	$$\|\x'\|^2-\frac{\delta^2}{n'}-\|\x\|^2=\sum_{i=1}^p(c'_i-c_i)a_i^2-\frac1{n'}\left(\sum_{i=1}^p(c'_i-c_i)a_i\right)^2.$$
The right hand side is non-negative because by the Cauchy--Schwarz inequality,
	$$\left(\sum_{i=1}^p(c'_i-c_i)a_i\right)^2\le\left(\sum_{i=1}^p(c'_i-c_i)\right)\left(\sum_{i=1}^p(c'_i-c_i)a_i^2\right)\le n'\sum_{i=1}^p(c'_i-c_i)a_i^2.$$
It follows that $\|\x'\|^2-\frac{\delta^2}{n'}\ge\|\x\|^2$.
	Therefore, by  Lemma~\ref{remark:delta},
	$$\mu(G')\leq\frac{\x'L(G')\x'^\top}{\|\x'\|^2-\frac{\delta^2}{n'}}\le\frac{\x L(G)\x^\top}{\|\x\|^2}=\mu(G).$$
\end{proof}

\begin{remark} \label{rem:sign}\rm
	In \cite{Abdi}, it was proved that minimal quartic graphs belong to a family of graphs with path-like structure whose blocks, among others, include $M_0$ as middle blocks and  $D_0,\ldots,D_4$ as end blocks (see Theorem~\ref{thm:quarticOLD} below for the presice description of such graphs). In \cite{Abdi},
	 it was also shown that any graph of this family has an equitable partition $\Pi$ (whose cells, among others, include the  pairs of vertices  drawn vertically above each other) and a Fiedler vector $\x$ such that
(i) the components of $\x$ on each cell of $\Pi$  are equal;
	(ii)  the componenets of $\x$ on the cells of $\Pi$ form a strictly decreasing sequence changing sign once.
\end{remark}

We now present the main result of this section.

\begin{theorem} \label{thm:muGn}
	Let $G$ be  a quartic graph of order $n$ with a path-like structure
	whose middle blocks are  all $M_0$ and end blocks are from $D_0,\ldots,D_4$. Then  $\mu(G)=(1+o(1))\frac{4\pi^2}{n^2}$.
\end{theorem}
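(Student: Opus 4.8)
The plan is to sandwich $\mu(G)$ between two matching estimates: an upper bound from an explicit test vector via Lemma~\ref{remark:delta}, and a lower bound obtained by recognizing $\mu(G)$ as the algebraic connectivity of a weighted \emph{quotient path} and then invoking a Poincar\'e (Wirtinger) inequality. In both halves the decisive quantity is the effective resistance of $G$ between its two ends, computed in the usual electrical-network sense with a unit resistor on every edge. Write $n-11=5m+r$ with $r\le4$, so $G$ is a chain of $m=(1+o(1))n/5$ middle blocks $M_0$ capped by two end blocks of bounded size. By Remark~\ref{rem:sign}, $G$ has an equitable partition $\Pi=\{C_1,\dots,C_p\}$, whose cells we order along the chain (in the bulk the cells are the vertical pairs of the $M_0$'s together with the cut vertices), and a Fiedler vector constant on each $C_i$ whose cell-values strictly decrease with one sign change. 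Hence $\mu(G)$ is the first nonzero eigenvalue of the quotient of $L(G)$ along $\Pi$; explicitly, writing $e_{ij}$ for the number of edges of $G$ between $C_i$ and $C_j$,
\[
\mu(G)=\min\Bigl\{\,\tfrac{\sum_{i<j}e_{ij}(v_i-v_j)^2}{\sum_i|C_i|\,v_i^2}\ :\ v\ne\mathbf 0,\ \textstyle\sum_i|C_i|v_i=0\,\Bigr\},
\]
i.e.\ the first nonzero eigenvalue of the generalized problem on the weighted path $\mathcal P$ on $\{C_1,\dots,C_p\}$ with edge weights $e_{ij}$ and vertex masses $|C_i|$. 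The first step is to record the electrical parameters of $M_0$: forcing the two vertices of each vertical pair to a common potential (they are equivalent under the automorphism of $M_0$ fixing the two cut vertices) reduces $M_0$ between its cut vertices to the series path with conductances $2,4,2$, so $M_0$ contributes mass $5$ and resistance $\tfrac12+\tfrac14+\tfrac12=\tfrac54$. Therefore the end-to-end resistance of $\mathcal P$ is $T:=(1+o(1))\tfrac54m=(1+o(1))\tfrac n4$, and the mass of $\mathcal P$ per unit of resistance is $\equiv4$ in the bulk.

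\smallskip
For the upper bound I would take $\x\in\mathbb R^{V(G)}$ constant on each $C_i$ with value $a_i=\cos(\pi t_i/T)$, where $t_i$ is the cumulative resistance of $\mathcal P$ from the left end up to $C_i$ (so $t_1=0$, $t_p=T$, $t_{i+1}-t_i=1/e_{i,i+1}$). By \eqref{eq2}, $\x L(G)\x^\top=\sum_i(a_{i+1}-a_i)^2/(t_{i+1}-t_i)$ is a Riemann sum for $\int_0^T\!\bigl(\tfrac{d}{dt}\cos\tfrac{\pi t}{T}\bigr)^2dt=\pi^2/(2T)$; $\|\x\|^2=\sum_i|C_i|a_i^2$ is a mass-weighted Riemann sum for $4\int_0^T\cos^2\tfrac{\pi t}{T}\,dt=2T=n/2$; and $\delta=\x\mathbf1^\top=O(1)$ since $\int_0^T\cos\tfrac{\pi t}{T}\,dt=0$. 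The errors in these Riemann-sum approximations are of lower order because $t\mapsto\cos(\pi t/T)$ has derivatives of size $O(1/T)$ and each block has bounded size and mass. Lemma~\ref{remark:delta} then yields
\[
\mu(G)\ \le\ \frac{\x L(G)\x^\top}{\|\x\|^2-\delta^2/n}\ =\ \frac{\pi^2/(2T)+o(1/T)}{n/2-O(1/n)}\ =\ \frac{\pi^2}{Tn}\,(1+o(1))\ =\ \frac{4\pi^2}{n^2}\,(1+o(1)).
\]

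\smallskip
For the lower bound I would fix an arbitrary $v$ with $\sum_i|C_i|v_i=0$ and let $\hat v\colon[0,T]\to\mathbb R$ be its piecewise-linear interpolant in the resistance coordinate, $\hat v(t_i)=v_i$. Then the numerator of the Rayleigh quotient equals the Dirichlet energy $D:=\int_0^T\hat v'(t)^2\,dt$. The crux is the comparison $\sum_i|C_i|v_i^2=4\int_0^T\hat v^2\,dt+\mathrm{error}$ with $\mathrm{error}=o(T^2)\,D$: block by block the mass density equals $4$ with node positions displaced by $O(1)$ in $t$, so the per-block discrepancy is controlled by the oscillation of $\hat v^2$ there, and summing over blocks and applying Cauchy--Schwarz bounds the total by $O(\|\hat v\|_2\|\hat v'\|_2)=O(TD)$, once one also checks from the constraint that the mean of $\hat v$ is $O(\sqrt{D/T})$ and hence contributes negligibly. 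Wirtinger's inequality then gives $\int_0^T\hat v^2\le\frac{T^2}{\pi^2}(1+o(1))D$, so $\sum_i|C_i|v_i^2\le\frac{4T^2}{\pi^2}(1+o(1))D$, whence $\mu(G)\ge\frac{\pi^2}{4T^2}(1+o(1))=\frac{4\pi^2}{n^2}(1+o(1))$. Combined with the previous paragraph, $\mu(G)=(1+o(1))\frac{4\pi^2}{n^2}$.

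\smallskip
The step I expect to be the real obstacle is making the lower-bound comparison $\bigl|\sum_i|C_i|v_i^2-4\int_0^T\hat v^2\bigr|=o(T^2)\,D$ hold \emph{uniformly} over all admissible $v$, not merely over near-minimizers: rapidly varying $v$ are harmless because they make $D$ large, so the delicate regime is the slowly varying one, and there the exact mass-to-resistance ratio $4$ of $M_0$ together with the boundedness of all block sizes is what saves the day, while the two end blocks must be handled as $O(1)$ perturbations absorbed into the $o(\cdot)$ terms. A secondary point is to confirm that the equitable partition and the monotone, sign-changing Fiedler vector furnished by Remark~\ref{rem:sign} genuinely exist for \emph{every} admissible choice of end blocks $D_0,\dots,D_4$, so that the reduction to $\mathcal P$ is legitimate in all cases.
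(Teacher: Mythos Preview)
Your approach is correct and genuinely different from the paper's. Both proofs share the same heart---a cosine test vector for the upper bound and a reduction to a path for the lower bound---but they organize the argument very differently.

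The paper does \emph{not} treat a general $G$ directly. Instead it proves the upper bound only for the single graph $\mathcal H_{0,0}$ (both end blocks $D_0$) via an explicit cosine vector and a direct trigonometric computation, and proves the lower bound only for $\mathcal H_{4,4}$ (both end blocks $D_4$) by taking the actual Fiedler vector, reading off its values on the cut vertices and end blocks, and comparing the resulting Rayleigh quotient with that of the ordinary path $P_{m+11}$ (using the known formula $\mu(P_h)=2(1-\cos(\pi/h))$). The passage to arbitrary end blocks is then done \emph{combinatorially}: Lemma~\ref{lem:muGnEndBlock} (the ``fits'' lemma) together with Table~\ref{tab} shows $\mu(\mathcal H_{4,4})\le\mu(\mathcal H_{i,j})\le\mu(\mathcal H_{0,0})$ for all $i,j$, so the two extremal estimates squeeze every admissible $G$.

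Your route bypasses the fitting lemma entirely: by working in the resistance coordinate (where each $M_0$ has resistance $5/4$ and mass $5$, hence density exactly $4$) and invoking a Wirtinger/Poincar\'e inequality for the piecewise-linear interpolant, you get both bounds for every $G$ at once, with the end blocks absorbed as $O(1)$ perturbations. This is more conceptual and would generalize more readily to other block types; the price is the uniform error estimate $\bigl|\sum_i|C_i|v_i^2-4\int_0^T\hat v^2\bigr|=o(T^2)D$, which you correctly flag as the crux and which does go through (the per-block discrepancy is $O(|v|\cdot\Delta+\Delta^2)$, Cauchy--Schwarz gives $O(\sqrt{W D})$, and Wirtinger with the $O(\sqrt{mD})$ bound on the mean closes the loop). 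One small point to tidy: the quotient $\mathcal P$ is a genuine path on the $M_0$ portion but not necessarily at the end blocks (e.g.\ $D_2$ has cross-edges between non-adjacent cells), so your resistance coordinate should really be set up on the chain of cut vertices with the end blocks treated as lumped $O(1)$ pieces---exactly the ``$O(1)$ perturbation'' you already anticipate.
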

\begin{proof}
	We denote the quartic graph with $m$ middle blocks $M_0$ and end blocks $D_i$ and $D_j$ by
	$\h_{i,j}$, for $0\le i,j\le4$. The order of $\h_{i,j}$ is between $5m+11$ (the order of $\h_{0,0}$) and $5m+19$ (the order of $\h_{4,4}$).
	So it is enough to prove that $\mu(\h_{i,j})=(1+o(1))\frac{4\pi^2}{25m^2}$.
	
	We first consider the graph $\h_{0,0}$.
Define  the vector $\x=(x_1,\ldots,x_{m+1})$ with
	$$x_i=\cos\left(\frac{(2i-1)\pi}{2m+2}\right),~~i=1,\ldots,m+1.$$
	Note that  $\x$ is a skew symmetric vector,  i.e. $x_i=- x_{m+2-i}$ for $1\le i\le\lceil(m+1)/2\rceil$. We extend $\x$ to define a vector $\bar\x$ on $\h_{0,0}$ whose components are as given in Figure~\ref{fig:H00}.
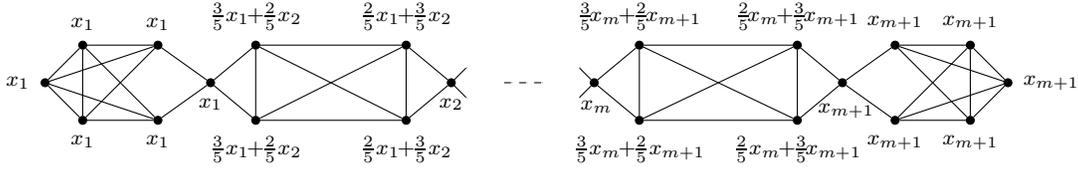
\begin{figure}[h!]
	\centering
	\begin{tikzpicture}
	\vertex[fill] (1) at (-.9,0) [label=left:\scriptsize{$x_1$}]{};
	\vertex[fill] (2) at (-.4,.5) [label=above:\scriptsize{$x_1$}] {};
	\vertex[fill] (3) at (-.4,-.5) [label=below:\scriptsize{$x_1$}] {};
	\vertex[fill] (4) at (.6,.5) [label=above:\scriptsize{$x_1$}] {};
	\vertex[fill] (5) at (.6,-.5) [label=below:\scriptsize{$x_1$}] {};
	\vertex[fill] (6) at (1.3,0) [label=below:\scriptsize{$x_1$}] {};
	\vertex[fill] (7) at (1.9,.5) [label=above:\scriptsize{$\frac35x_1\!\!+\!\!\frac25x_2$}] {};
	\vertex[fill] (8) at (1.9,-.5) [label=below:\scriptsize{$\frac35x_1\!\!+\!\!\frac25x_2$}] {};
	\vertex[fill] (9) at (3.9,.5) [label=above:\scriptsize{$\frac25x_1\!\!+\!\!\frac35x_2$}] {};
	\vertex[fill] (10) at (3.9,-.5) [label=below:\scriptsize{$\frac25x_1\!\!+\!\!\frac35x_2$}] {};
	\vertex[fill] (11) at (4.5,0) [label=below:\scriptsize{$x_2$}] {};
	\vertex[fill] (22) at (6.4,0) [] {};
	\vertex[fill] (23) at (7,.5) [label=above:\scriptsize{$\frac35\!x_m\!\!+\!\!\frac25\!x_{m+1}$}] {};
	\vertex[fill] (24) at (7,-.5) [label=below:\scriptsize{$\frac35x_m\!\!+\!\!\frac25x_{m+1}$}] {};
	\vertex[fill] (25) at (9.1,.5) [label=above:\scriptsize{$\frac25\!x_m\!\!+\!\!\frac35\!x_{m+1}$}] {};
	\vertex[fill] (26) at (9.1,-.5) [label=below:\scriptsize{$\frac25x_m\!\!+\!\!\frac35\!x_{m+1}$}] {};
	\vertex[fill] (27) at (9.7,0) [] {};
	\vertex[fill] (28) at (10.4,.5) [label=above:\scriptsize{$x_{m+1}$}] {};
	\vertex[fill] (29) at (10.4,-.5) [label=below:\scriptsize{$x_{m+1}$}] {};
	\vertex[fill] (30) at (11.4,.5) [label=above:\scriptsize{$x_{m+1}$}] {};
	\vertex[fill] (31) at (11.4,-.5) [label=below:\scriptsize{$x_{m+1}$}] {};
	\vertex[fill] (32) at (11.9,0) [label=right:\scriptsize{$x_{m+1}$}] {};
	\tikzstyle{vertex}=[circle, draw, inner sep=0pt, minimum size=0pt]
	\vertex[fill] (12) at (4.7,.2)[] {};
	\vertex[fill] (13) at (4.7,-.2)[] {};
	\vertex[fill] (14) at (5.2,0)[]{} ;
	\vertex[fill] (15) at (5.3,0)[]{} ;
	\vertex[fill] (16) at (5.4,0)[]{} ;
	\vertex[fill] (17) at (5.5,0)[]{} ;
	\vertex[fill] (18) at (5.6,0)[]{} ;
	\vertex[fill] (19) at (5.7,0)[]{} ;
	\vertex[fill] (20) at (6.2,.2)[] {};
	\vertex[fill] (21) at (6.2,-.2)[] {};
	\vertex[] () at (6.43,-.09) [label=below:\scriptsize{$x_m$}] {};
	\vertex[] () at (9.75,-.13) [label=below:\scriptsize{$x_{m+1}$}] {};
	\path
	(1) edge (2)
	(1) edge (3)
	(1) edge (4)
	(1) edge (5)
	(2) edge (3)
	(2) edge (4)
	(2) edge (5)
	(3) edge (4)
	(3) edge (5)
	(4) edge (6)
	(5) edge (6)
	(6) edge (7)
	(6) edge (8)
	(7) edge (8)
	(7) edge (9)
	(7) edge (10)
	(8) edge (9)
	(8) edge (10)
	(9) edge (10)
	(9) edge (11)
	(10) edge (11)
	(11) edge (12)
	(11) edge (13)
	(15) edge (14)
	(17) edge (16)
	(19) edge (18)
	(20) edge (22)
	(21) edge (22)
	(22) edge (23)
	(22) edge (24)
	(23) edge (24)
	(23) edge (25)
	(23) edge (26)
	(24) edge (25)
	(24) edge (26)
	(25) edge (26)
	(25) edge (27)
	(26) edge (27)
	(27) edge (28)
	(27) edge (29)
	(28) edge (30)
	(28) edge (31)
	(28) edge (32)
	(29) edge (30)
	(29) edge (31)
	(29) edge (32)
	(30) edge (31)
	(30) edge (32)
	(31) edge (32) ;
	\end{tikzpicture}
	\caption{The graph $\h_{0,0}$ and the components of $\bar\x$}
	\label{fig:H00}
	\end{figure}
	
	As $\x$ is skew symmetric, $\bar\x$ is also skew symmetric. It follows that $\bar\x\perp\bf1$. Therefore, by \eqref{eq2} we have
	\begin{align}
	\frac{\bar\x L(\h_{0,0})\bar\x^\top}{\bar\x\bar\x^\top}\nonumber
	&= \frac{\frac{20}{25} \sum_{i=1}^{m}(x_i-x_{i+1})^2}{\sum_{i=1}^{m+1}x_i^2+\frac{2}{25}\sum_{i=1}^{m}(3x_{i}+2x_{i+1})^2+\frac{2}{25}\sum_{i=1}^{m}(2x_{i}+3x_{i+1})^2+10x_1^2}\nonumber \\
	&=  \frac{\frac{20}{25} \sum_{i=1}^{m}(x_i-x_{i+1})^2}{\sum_{i=1}^{m+1}x_i^2+\frac{2}{25}\sum_{i=1}^{m}(13x^2_i+24x_ix_{i+1}+13x^2_{i+1})+10x_1^2}\nonumber \\
	&\leq  \frac{\frac{20}{25} \sum_{i=1}^{m}(x_i-x_{i+1})^2}{\frac{77}{25}\sum_{i=1}^{m}x_i^2+\frac{48}{25}\sum_{i=1}^{m}x_ix_{i+1}}\nonumber \\
	&= \frac{80\sin^2(\frac{\pi}{2m+2})\sum_{i=1}^{m}\sin^2(\frac{\pi  i}{m+1})}{77\sum_{i=1}^{m}\cos^2(\frac{(2i-1)\pi}{2m+2})
		+24(m\cos\frac{\pi}{m+1}+\sum_{i=1}^m\cos(\frac{2\pi i}{m+1}))} \label{eq:cos-sin}\\
	&=\frac{40(m+1) \sin^2(\frac{\pi}{2m+2})}{77\left(\frac{m+1}{2}-\cos^2(\frac{\pi}{2m+2})\right)+24(m\cos(\frac{\pi}{m+1})-1)}\label{eq:sin^2} \\
	&=(1+o(1))\frac{4\pi^2}{25m^2}. \nonumber
	\end{align}
 The last equality is obtained using Taylor's series for sine and cosine. Note that \eqref{eq:cos-sin} is obtained using the identities
	\begin{align*}
	\cos\alpha-\cos\beta&=2 \sin \frac{\alpha+\beta}{2} \sin \frac{\beta-\alpha}{2},\\
	\cos\alpha \cos\beta&=\frac{1}{2}( \cos(\alpha-\beta)+\cos(\alpha+\beta)),
	\end{align*}
	and   \eqref{eq:sin^2} is deduced from the identities
	\begin{align*}
	\sum_{i=1}^{m}\sin^2\left(\frac{\pi i}{m+1}\right)&=\frac{m+1}{2},\\
	\sum_{i=1}^m\cos\left(\frac{2\pi i}{m+1}\right)&=-1,\\
	\sum_{i=1}^{m}\cos^2\left(\frac{(2i-1)\pi}{2m+2}\right)&=\frac{m+1}{2}-\cos^2\left(\frac{\pi}{2m+2}\right).
	\end{align*}
	Therefore, we conclude that
	\begin{equation}\label{eq:g0,0}
	\mu(\h_{0,0})\leq (1+o(1))\frac{4\pi^2}{25m^2}.
	\end{equation}

	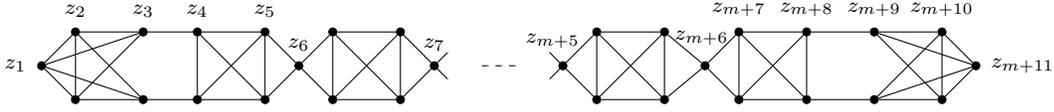
\begin{figure}
		\centering
\begin{tikzpicture}[scale=0.9]
	\vertex[fill] (1) at (-1.8,0) [label=left:\scriptsize{$z_1$}]{};
	\vertex[fill] (2) at (-1.3,.5) [label=above:\scriptsize{$z_2$}] {};
	\vertex[fill] (3) at (-1.3,-.5) [] {};
	\vertex[fill] (4) at (-.3,.5) [label=above:\scriptsize{$z_3$}] {};
	\vertex[fill] (5) at (-.3,-.5) [] {};
	\vertex[fill] (06) at (.5,.5) [label=above:\scriptsize{$z_4$}] {};
	\vertex[fill] (07) at (.5,-.5) [] {};
	\vertex[fill] (08) at (1.5,.5) [label=above:\scriptsize{$z_5$}] {};
	\vertex[fill] (09) at (1.5,-.5) [] {};
	\vertex[fill] (6) at (2,0) [label=above:\scriptsize{$z_6$}] {};
	\vertex[fill] (7) at (2.5,.5) [] {};
	\vertex[fill] (8) at (2.5,-.5) [] {};
	\vertex[fill] (9) at (3.5,.5) [] {};
	\vertex[fill] (10) at (3.5,-.5) [] {};
	\vertex[fill] (11) at (4,0) [label=above:\scriptsize{$z_7$}] {};
	\vertex[fill] (22) at (5.9,0) [] {};
	\vertex[fill] (23) at (6.4,.5) [] {};
	\vertex[fill] (24) at (6.4,-.5) [] {};
	\vertex[fill] (25) at (7.4,.5) [] {};
	\vertex[fill] (26) at (7.4,-.5) [] {};
	\vertex[fill] (27) at (8,0) [] {};
	\vertex[fill] (028) at (8.5,.5) [label=above:\scriptsize{$z_{m+7}$}] {};
	\vertex[fill] (029) at (8.5,-.5) [] {};
	\vertex[fill] (030) at (9.5,.5) [label=above:\scriptsize{$z_{m+8}$}] {};
	\vertex[fill] (031) at (9.5,-.5) [] {};
	\vertex[fill] (28) at (10.5,.5) [label=above:\scriptsize{$z_{m+9}$}] {};
	\vertex[fill] (29) at (10.5,-.5) [] {};
	\vertex[fill] (30) at (11.5,.5) [label=above:\scriptsize{$z_{m+10}$}] {};
	\vertex[fill] (31) at (11.5,-.5) [] {};
	\vertex[fill] (32) at (12,0) [label=right:\scriptsize{$z_{m+11}$}] {};
	\tikzstyle{vertex}=[circle, draw, inner sep=0pt, minimum size=0pt]
	\vertex[fill] (12) at (4.2,.2)[] {};
	\vertex[fill] (13) at (4.2,-.2)[] {};
	\vertex[fill] (14) at (4.7,0)[]{} ;
	\vertex[fill] (15) at (4.8,0)[]{} ;
	\vertex[fill] (16) at (4.9,0)[]{} ;
	\vertex[fill] (17) at (5,0)[]{} ;
	\vertex[fill] (18) at (5.1,0)[]{} ;
	\vertex[fill] (19) at (5.2,0)[]{} ;
	\vertex[fill] (20) at (5.7,.2)[] {};
	\vertex[fill] (21) at (5.7,-.2)[] {};
	\vertex[] () at (5.75,.1) [label=above:\scriptsize{$z_{m+5}$}] {};
	\vertex[] () at (7.95,.14) [label=above:\scriptsize{$z_{m+6}$}] {};
	\path
	(1) edge (2)
	(1) edge (3)
	(1) edge (4)
	(1) edge (5)
	(2) edge (3)
	(2) edge (4)
	(2) edge (5)
	(3) edge (4)
	(3) edge (5)
	(4) edge (06)
	(5) edge (07)
	(08) edge (06)
	(07) edge (06)
	(09) edge (08)
	(09) edge (06)
	(08) edge (07)
	(09) edge (07)
	(6) edge (08)
	(6) edge (09)
	(7) edge (6)
	(6) edge (8)
	(7) edge (8)
	(7) edge (9)
	(7) edge (10)
	(8) edge (9)
	(8) edge (10)
	(9) edge (10)
	(9) edge (11)
	(10) edge (11)
	(11) edge (12)
	(11) edge (13)
	(15) edge (14)
	(17) edge (16)
	(19) edge (18)
	(20) edge (22)
	(21) edge (22)
	(22) edge (23)
	(22) edge (24)
	(23) edge (24)
	(23) edge (25)
	(23) edge (26)
	(24) edge (25)
	(24) edge (26)
	(25) edge (26)
	(25) edge (27)
	(26) edge (27)
	(27) edge (028)
	(27) edge (029)
	(029) edge (028)
	(028) edge (030)
	(028) edge (031)
	(029) edge (030)
	(029) edge (031)
	(030) edge (031)
	(030) edge (28)
	(29) edge (031)
	(28) edge (30)
	(28) edge (31)
	(28) edge (32)
	(29) edge (30)
	(29) edge (31)
	(29) edge (32)
	(30) edge (31)
	(30) edge (32)
	(31) edge (32) ;
	\end{tikzpicture}
	\caption{The graph $\h_{4,4}$ and the components of $\z$}
	\label{fig:H44}
\end{figure}
		We now prove that $(1+o(1))\frac{4\pi^2}{25m^2}$ is a lower bound for $\mu(\h_{4,4})$.
	Let $\y=(y_1,\ldots,y_n)$ be a Fiedler vector of $\h_{4,4}$.
	The graph $\h_{4,4}$  has $m+1$ cut vertices and $m+2$ blocks, say $B_1,\ldots,B_{m+2}$.
	Consider the components of $\y$ on the cut vertices and on the end blocks of $\h_{4,4}$
	which give rise to a vector $\z$ consisting of $m+11$ components as depicted in Figure~\ref{fig:H44}.
	Note  that $\y$ is skew symmetric.
	To verify this, observe that by the symmetry of $\h_{4,4}$, $\y'=(y_n,y_{n-1},\ldots,y_1)$ is also an eigenvector for $\mu(\h_{4,4})$.
	It follows that $\y-\y'$ itself is a skew symmetric eigenvector for $\mu(\h_{4,4})$ (note that from Remark~\ref{rem:sign}, it is seen that $\y-\y'\ne\bf0$), so that we may replace $\y-\y'$ for $\y$. Now, from Remark~\ref{rem:sign} it follows that $\z=(z_1, z_2, \ldots, z_{m+11})\ne\bf0$.
	As $\y$ is skew symmetric, it follows that $\z$ is also skew symmetric and thus $\z\perp\bf1$.
%Let $6\le k\le m+5$ and
Let $B_r$ be one of the middle blocks of $\h_{4,4}$ and the components of $\y$ on the left vertex and the right vertex of $B_r$  be $z_{k}$ and $z_{k+1}$, respectively.
	Let $s$ and
	$t$ be the components of $\y$ on the two middle vertices of $B_r$ (which are equal by Remark~\ref{rem:sign}) as shown in Figure \ref{fig:Bk}.
\begin{figure}[h!]
	\centering\begin{tikzpicture}[scale=0.8]
	\vertex[fill] (r) at (0,0) [label=left:\footnotesize{$z_k$}] {};{};
	\vertex[fill] (r1) at (.5,.5) [label=above:\footnotesize{$s$}] {}; {};
	\vertex[fill] (r2) at (.5,-.5) [label=below:\footnotesize{$s$}] {};
	\vertex[fill] (r3) at (1.5,.5) [label=above:\footnotesize{$t$}] {}; {};
	\vertex[fill] (r4) at (1.5,-.5) [label=below:\footnotesize{$t$}] {};
	\vertex[fill] (r5) at (2,0) [label=right:\footnotesize{$z_{k+1}$}] {}; {};
	\tikzstyle{vertex}=[circle, draw, inner sep=0pt, minimum size=0pt]
	\vertex (s) at (-.2,.2) []{};
	\vertex (ss) at (-.2,-.2) []{};
	\vertex (sss) at (2.2,.2) [] {};
	\vertex (ssss) at (2.2,-.2) [] {};
	\path
	(r) edge (r1)
	(r) edge (r2)
	(r1) edge (r2)
	(r1) edge (r3)
	(r1) edge (r4)
	(r2) edge (r3)
	(r2) edge (r4)
	(r3) edge (r4)
	(r5) edge (r4)
	(r3) edge (r5)
	(r) edge (s)
	(r) edge (ss)
	(r5) edge (sss)
	(r5) edge (ssss);
	\end{tikzpicture}
	\caption{The middle  $B_r$ and the components of $\y$ on its vertices}
\label{fig:Bk}		
	\end{figure}
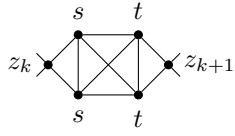

We have
$$\sum_{ij\in E(B_r)}(y_i-y_j)^2=2(z_k-s)^2+4(s-t)^2+2(t-z_{k+1})^2.$$
The right hand side, considered as a function of $s$ and $t$,  is minimized at $s=\frac15(3z_{k}+2z_{k+1})$ and
$t=\frac{1}{5}(2z_k+3z_{k+1})$. This implies that
$$\sum_{ij\in E(B_r)}(y_i-y_j)^2\ge\frac{4}{5}(z_{k}-z_{k+1})^2.$$
It follows that
\begin{align}\label{u}
\sum_{ij\in E(\h_{4,4})}(y_i-y_j)^2 &= \sum_{ij\in E(B_1)}(y_i-y_j)^2+\sum_{r=2}^{m+1}\sum_{ij\in E(B_r)}(y_i-y_j)^2 +\sum_{ij\in E(B_{m+2})}(y_i-y_j)^2 \nonumber \\
&\geq \frac{4}{5}\sum_{k=1}^{m+10} (z_{k}-z_{k+1})^2.
\end{align}
Since $\y$ is skew-symmetric and its components are decreasing (by Remark~\ref{rem:sign}), for the middle block $B_r$ we have $z_k^2\ge s^2\ge t^2$ if $2\le r\le (m+3)/2$, and $s^2\le t^2\le z_{k+1}^2$ if $(m+3)/2\le r\le m$.
Furthermore, $\sum_{i=1}^9y_i^2 \leq 2\sum_{i=1}^{5}z_i^2$ and
$\sum_{i=n-8}^ny_i^2 \leq 2\sum_{i=m+7}^{m+11}z_i^2$.
It turns out that
\begin{equation} \label{d}
\sum_{i=1}^ny_i^2 \leq 5\sum_{i=1}^{m+11}z_i^2.
\end{equation}
Now, from  \eqref{u} and \eqref{d}, we  infer that
\begin{equation} \label{eq:P_{m+11}}
\mu(\h_{4,4})=\frac{\y L(\h_{4,4})\y^\top}{\y\y^\top}\geq \frac{4\sum_{i=1}^{m+10}(z_i-z_{i+1})^2}{25\sum_{i=1}^{m+11}z_i^2}.
\end{equation}
Note that the right hand side of \eqref{eq:P_{m+11}} is the same as
$\frac4{25}\frac{\z L(P_{m+11})\z^\top}{\z\z^\top}$, where $P_{m+11}$ is the path of order $m+11$.
Thus, by the fact that  $\mu(P_h)=2(1-\cos\frac{\pi}{h})$ (see \cite{fiedler1973algebraic}),
it follows that
\[\frac{\sum_{i=1}^{m+10}(z_i-z_{i+1})^2}{\sum_{i=1}^{m+11}z_i^2}\ge\mu(P_{m+11})=(1+o(1))\frac{\pi^2}{m^2}.\]
Therefore, by \eqref{eq:P_{m+11}}
\begin{equation} \label{eq:muH4,4}
\mu(\h_{4,4})\geq \frac{4}{25}\mu(P_{m+11}) = (1+o(1))\frac{4\pi^2}{25m^2}.
\end{equation}

	From Table~\ref{tab} it is evident that all $D_i$'s fit $D_0$. Also, by Remark \ref{rem:sign}, they fulfill the conditions of Lemma~\ref{lem:muGnEndBlock}. So by  applying twice Lemma~\ref{lem:muGnEndBlock}, we obtain
	$$\mu(\h_{i,j})\le\mu(\h_{0,j})\le\mu(\h_{0,0}).$$
	Also $D_4$ fits $D_3$ and $D_3$ fits each of $D_0,D_1$, and $D_2$. Therefore, again by Lemma~\ref{lem:muGnEndBlock},
	$$\mu(\h_{i,j})\ge\mu(\h_{4,j})\ge\mu(\h_{4,4}).$$
	The result now follows from \eqref{eq:g0,0} and \eqref{eq:muH4,4}.
\end{proof}

\section{Structure of minimal quartic graphs}  \label{sec:structure}

 Motivated by  the Aldous--Fill conjecture and also as an analogue  to Babai's conjecture on minimal cubic graphs,
 we consider the problem of determining the structure of minimal quartic graphs.
 In \cite{Abdi}, it was proved that minimal quartic graphs have a path-like structure  with specified blocks. We start this section by quoting this result.

The possible blocks of minimal quartic graphs are
of two types: `short' and `long'.
By {\em short} blocks we mean the blocks $M_0,D_0,D_1,D_2,D_3$ and those given in Figure~\ref{fig:MiDi}.
The {\em long} blocks, roughly speaking,  are constructed by putting some short blocks together with the general structure given in Figure~\ref{fig:long}.
More precisely, the building `bricks' of long blocks  are the graphs $M'_0,M'_1,M'_2,D'_0,D'_3$, obtained by removing the {\em right} degree $2$ vertex of
the corresponding short blocks, as well as the graphs $M''_0,M''_1$,  obtained by removing both degree $2$ vertices of $M_0,M_1$.
For any of these graphs, say $B$, we denote its mirror image by $\tilde B$.

 \begin{figure}[h!]
 	\captionsetup[subfigure]{labelformat=empty}
 	\centering
\subfloat[$M_1$]{\begin{tikzpicture}[scale=.9]
\vertex[fill] (r) at (0,0) [] {};
	\vertex[fill] (r1) at (.5,.5) [] {};
	\vertex[fill] (r2) at (.5,-.5) [] {};
	\vertex[fill] (r3) at (1.5,.5) [] {};
	\vertex[fill] (r4) at (1.5,-.5) [] {};
	\vertex[fill] (r5) at (2.5,.5) [] {};
	\vertex[fill] (r6) at (2.5,-.5) [] {};
\vertex[fill] (r7) at (3,0) [] {};
	\path
    ( r) edge (r1)
	(r) edge (r2)
	(r5) edge (r6)
	(r1) edge (r2)
	(r1) edge (r3)
	(r1) edge (r4)
	(r2) edge (r3)
	(r2) edge (r4)
	(r3) edge (r6)
	(r4) edge (r5)
	(r5) edge (r3)
	(r4) edge (r6)
    (r5) edge (r7)
	(r7) edge (r6);
	\end{tikzpicture}}
\qquad
\subfloat[$M_2$]{\begin{tikzpicture}[scale=.9]
    \vertex[fill] (r) at (1,0) [] {};
	\vertex[fill] (r1) at (1.5,.5) [] {};
	\vertex[fill] (r2) at (1.5,-.5) [] {};
	\vertex[fill] (r3) at (2,0) [] {};
	\vertex[fill] (r4) at (2.5,.5) [] {};
	\vertex[fill] (r5) at (2.5,-.5) [] {};
	\vertex[fill] (r6) at (3.5,.5) [] {};
	\vertex[fill] (r7) at (3.5,-.5) [] {};
   \vertex[fill] (r8) at (4,0) [] {};
	\path
    (r) edge (r2)
	(r1) edge (r)
	(r1) edge (r2)
	(r1) edge (r3)
	(r1) edge (r4)
	(r2) edge (r3)
	(r2) edge (r5)
	(r3) edge (r4)
	(r3) edge (r5)
	(r6) edge (r4)
	(r5) edge (r7)
	(r5) edge (r6)
	(r4) edge (r7)
	(r6) edge (r7)
    (r6) edge (r8)
	(r7) edge (r8) ;
	\end{tikzpicture}}
 \qquad
\subfloat[$M_3$]{\begin{tikzpicture}[scale=.9]
       \vertex[fill] (r) at (1,0) [] {};
 		\vertex[fill] (r1) at (1.5,.5) [] {};
 		\vertex[fill] (r2) at (1.5,-.5)[] {};
 		\vertex[fill] (r3) at (2,0) [] {};
 		\vertex[fill] (r4) at (2.5,.5) [] {};
 		\vertex[fill] (r5) at (2.5,-.5) [] {};
 	\vertex[fill] (r6) at (3,0) [] {};
 		\path
 	   (r) edge (r1)
 		(r) edge (r2)
 		(r1) edge (r2)
 		(r1) edge (r3)
 		(r1) edge (r4)
 		(r2) edge (r3)
 		(r2) edge (r5)
 		(r3) edge (r4)
 		(r3) edge (r5)
 		(r5) edge (r4)
 	     (r6) edge (r5)
 		(r6) edge (r4) 	;
 		\end{tikzpicture}}
 	\caption{Some of the short blocks of potentially minimal quartic graphs}\label{fig:MiDi}
 \end{figure}
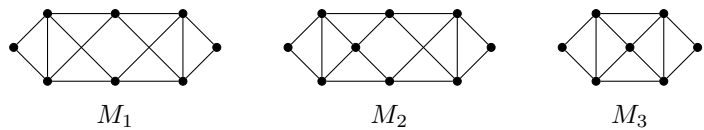

A long block is constructed from some $s\ge2$ bricks $B_1, \ldots, B_s$, where each $B_i$ is joined by two edges to $B_{i+1}$ (as shown in Figure~\ref{fig:long}) and $B_2, \ldots, B_{s-1}\in \{ M''_0, M''_1\}$.
There are three types of long blocks:
\begin{itemize}
\item[(i)] {\em long end block}: if $B_1\in\{D'_0, D'_3\}$ 	and $B_s \in \{\tilde M'_0, \tilde M'_1, \tilde M'_2\}$;
\item[(ii)] {\em  long middle block}: if  $B_1 \in\{M'_0, M'_1, M'_2\}$ and $B_s \in\{\tilde M'_0, \tilde M'_1, \tilde M'_2\}$;
\item[(iii)] {\em  long complete block}: if $B_1\in\{D'_0, D'_3\}$ and $B_s\in\{\tilde D'_0, \tilde D'_3\}$.
 \end{itemize}
 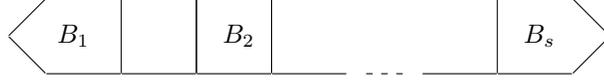
\begin{figure}[h!]
 	\centering
 	\begin{tikzpicture}
 	\tikzstyle{vertex}=[draw, inner sep=0pt, minimum size=0pt]
 	\vertex[fill] (r17) at (9,0) [label=left:\footnotesize{$$}] {};
 	\vertex[fill] (r) at (1,0) [label=left:\footnotesize{$$}] {};
 	\vertex[fill] (r1) at (1.5,.5) [] {};
 	\vertex[fill] (r2) at (1.5,-.5) [] {};
 	\vertex[fill] (r3) at (2.5,.5) [] {};
 	\vertex[fill] (r4) at (2.5,-.5) [] {};
 	\vertex[fill] (r5) at (3.5,.5) [] {};
 	\vertex[fill] (r6) at (3.5,-.5) [] {};
 	\vertex[fill] (r7) at (4.5,.5) [] {};
 	\vertex[fill] (r8) at (4.5,-.5) [] {};
 	\vertex[fill] (r9) at (5.5,.5) [] {};
 	\vertex[fill] (r10) at (5.5,-.5) [] {};
 	\vertex[fill] (r11) at (6.5,.5) [] {};
 	\vertex[fill] (r12) at (6.5,-.5) [] {};
 	\vertex[fill] (r13) at (7.5,.5) [] {};
 	\vertex[fill] (r14) at (7.5,-.5) [] {};
 	\vertex[fill] (r15) at (8.5,.5) [] {};
 	\vertex[fill] (r16) at (8.5,-.5) [] {};
 	\vertex[fill] (b1) at (1.5,0) [label=right:\footnotesize{$B_1$}] {};
 	\vertex[fill] (b1) at (3.7,0) [label=right:\footnotesize{$B_2$}] {};
 	\vertex[fill] (b1) at (7.7,0) [label=right:\footnotesize{$B_s$}] {};
 	\tikzstyle{vertex}=[ draw, inner sep=0pt, minimum size=0pt]
 	\vertex[] (1) at (5.75,.5) [] {};
 	\vertex[] (2) at (5.95,.5) [] {};
 	\vertex[] (3) at (6.15,.5) [] {};
 	\vertex[] (11) at (5.85,.5) [] {};
 	\vertex[] (22) at (6.05,.5) [] {};
 	\vertex[] (33) at (6.25,.5) [] {};
 	\vertex[] (111) at (5.75,-.5) [] {};
 	\vertex[] (222) at (5.95,-.5) [] {};
 	\vertex[] (333) at (6.15,-.5) [] {};
 	\vertex[] (1111) at (5.85,-.5) [] {};
 	\vertex[] (2222) at (6.05,-.5) [] {};
 	\vertex[] (3333) at (6.25,-.5) [] {};
 	\path
 	(r) edge (r1)
 	(r) edge (r2)
 	(r2) edge (r4)
 	(r1) edge (r3)
 	(r3) edge (r4)
 	(r3) edge (r5)
 	(r4) edge (r6)
 	(r5) edge (r6)
 	(r5) edge (r6)
 	(r7) edge (r8)
 	(r5) edge (r7)
 	(r6) edge (r8)
 	(r7) edge (r9)
 	(r8) edge (r10)
 	(r11) edge (r13)
 	(r12) edge (r14)
 	(r13) edge (r14)
 	(r13) edge (r15)
 	(r14) edge (r16)
 	(r16) edge (r17)
 	(r15) edge (r17)
 	(1) edge (11)
 	(2) edge (22)
 	(3) edge (33)
 	(111) edge (1111)
 	(222) edge (2222)
 	(333) edge (3333);
 	\end{tikzpicture}
  \caption{General structure of a long block}\label{fig:long}
 \end{figure}

 In passing, we remark that in a quartic graph, any cut vertex belongs to exactly two blocks and moreover has degree $2$ in each of them.
 Therefore, in the quartic graphs having a path-like structure, the middle and end blocks have exactly two and one vertices of degree $2$, respectively.

 \begin{theorem}[Abdi, Ghorbani, and Imrich \cite{Abdi}] \label{thm:quarticOLD}
Let $G$ be a graph with the minimum spectral gap in the family of connected quartic graphs on $n\ge11$ vertices.
If $G$ is a block, then it is a long complete block.
 If  $G$ itself is not a block, then it has a path-like structure in which each left end  block is either one of $D_0,\ldots,D_3$ or a long end block, and
 each middle block is either one of $M_0,M_1,M_2,\tilde M_2,M_3$ or a long middle block.
  Each right end block is the mirror image of some left end block.
\end{theorem}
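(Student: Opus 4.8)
The plan is to adapt the method Guiduli developed for cubic graphs (and refined by Brand, Guiduli, and Imrich) to degree~$4$. Let $G$ be a connected quartic graph on $n\ge11$ vertices with minimum algebraic connectivity and let $\x$ be a Fiedler vector normalized so that $\x\perp\1$ and $\|\x\|=1$, whence $\sum_{ij\in E(G)}(x_i-x_j)^2=\mu(G)$. The first step is to record that $\mu(G)$ is very small: by Theorem~\ref{thm:muGn} there is a quartic graph of each order $n$ with algebraic connectivity $(1+o(1))\frac{4\pi^2}{n^2}$, so the minimal graph satisfies $\mu(G)=O(n^{-2})$. Hence the ``energy budget'' $\sum_{ij\in E(G)}(x_i-x_j)^2$ is $O(n^{-2})$, which forces $\x$ to change slowly across $G$ and leaves essentially no room for ``shortcut'' edges jumping between far-apart Fiedler levels.

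Next I would extract the coarse shape. Order the vertices so that $x_{v_1}\ge\cdots\ge x_{v_n}$ and study the level sets $L_c=\{v:x_v\ge c\}$, which by Fiedler's theory induce connected subgraphs on the positive and negative sides. Combining this with $4$-regularity and the $O(n^{-2})$ budget, one shows $G$ cannot contain a large ``thick'' region: if some window of the ordering carried too many vertices relative to the drop of $\x$ across it, a local redistribution of $\x$ --- or a local re-wiring keeping the graph quartic --- would strictly decrease the Rayleigh quotient in \eqref{eq}, contradicting minimality (here Lemma~\ref{remark:delta} is the right bookkeeping device for re-wirings that disturb $\x\perp\1$). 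From this one deduces that $G$ has a path-like decomposition: apart from the case that $G$ is itself a block, it is a linear chain of blocks glued at cut vertices, and, as recorded just before the statement, every cut vertex of a quartic graph lies in exactly two blocks with degree $2$ in each, so each middle block has two degree-$2$ attachment vertices and each end block exactly one.

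The third step is the local classification of blocks. For a block $B$ with attachment vertices carrying Fiedler values $a$ (and $b$, for a middle block), $B$ contributes $\sum_{ij\in E(B)}(x_i-x_j)^2$ to the numerator and $\approx\sum_{i\in B}x_i^2$ to the denominator of the Rayleigh quotient; minimality of $G$ forces $B$ to minimize this local exchange ratio over all ways of completing a quartic block with the prescribed boundary edges and (by a monotonicity and sign analysis of $\x$ in the spirit of Remark~\ref{rem:sign}) the prescribed ordering of its vertices. Carrying out this optimization shows that a small block must be one of $M_0,M_1,M_2,\tilde M_2,M_3$ as a middle block, or one of $D_0,D_1,D_2,D_3$ as an end block. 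When a block is large, repeated local surgery --- peeling off a degree-$2$ vertex, or re-routing a pair of parallel cross-edges --- shows it must decompose into a chain of bricks $B_1B_2\cdots B_s$ with interior bricks in $\{M_0'',M_1''\}$ and prescribed ends, i.e.\ it is a long end, long middle, or (when $G$ is a block) long complete block.

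The hard part will be precisely this block classification. Each local modification changes both the numerator and the order-$n$ denominator, so one must run Lemma~\ref{remark:delta}-type estimates together with tight control on the size and sign pattern of $\x$ on the affected block; and the enumeration of quartic-completable small configurations with a given number of boundary edges is large enough that parts of it are most safely discharged by computer, as in the cubic precedent. The genuine crux is ruling out spurious large $2$-connected pieces --- showing that no large block other than the long blocks described above can be locally optimal --- which is where the bulk of the work lies.
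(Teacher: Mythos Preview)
This theorem is not proved in the present paper: it is quoted verbatim from \cite{Abdi} (Abdi, Ghorbani, and Imrich), as the paper says explicitly at the start of Section~\ref{sec:structure}. So there is no proof here to compare your proposal against.

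Your outline is a reasonable high-level description of the strategy one expects --- and indeed the one carried out in \cite{Abdi} --- namely, adapting Guiduli's cubic-graph machinery to degree~$4$: use the small energy budget $\mu(G)=O(n^{-2})$ to force a path-like block decomposition, then classify the admissible blocks by local Rayleigh-quotient surgery. But as written it is a plan, not a proof: the entire substance of the theorem lies in the block classification you defer to ``carrying out this optimization'' and ``repeated local surgery,'' and you correctly identify this as the crux without doing it. One caution on circularity: you invoke Remark~\ref{rem:sign} for the sign and monotonicity pattern of the Fiedler vector, but that remark is itself a consequence of the structure theorem from \cite{Abdi} that you are trying to prove, so in an actual argument you would need to establish those Fiedler-vector properties independently (or simultaneously with the block structure), not assume them.
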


 As the main result of this section, we improve considerably Theorem~\ref{thm:quarticOLD} by giving a much more precise description of the minimal quartic graphs.

\begin{theorem}\label{thm:quarticNEW}
	Let $G$ be a graph with minimum algebraic connectivity among connected quartic graphs on $n\ge11$ vertices. Then $G$ has a path-like structure, any middle block of $G$ is $M_0$,  the left end block of $G$ is one of  $D_0,\ldots,D_4$, and the right end block is the mirror image of one of these five blocks.
\end{theorem}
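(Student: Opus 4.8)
# Proof Proposal for Theorem~\ref{thm:quarticNEW}

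The plan is to start from Theorem~\ref{thm:quarticOLD}, which already guarantees that a minimal quartic graph $G$ is either a long complete block or has a path-like structure whose middle blocks lie in $\{M_0,M_1,M_2,\tilde M_2,M_3\}\cup\{\text{long middle blocks}\}$ and whose end blocks lie in $\{D_0,D_1,D_2,D_3\}\cup\{\text{long end blocks}\}$. The goal is to eliminate every possibility except: middle block $=M_0$, and end block $\in\{D_0,\dots,D_4\}$. The main engine will be Lemma~\ref{lem:muGnEndBlock} together with an analogous ``middle-block replacement'' lemma: if $G$ has a Fiedler vector that is constant on the cells of an equitable partition of a block $B$, and $B'$ fits $B$ with the cut vertices aligned in the appropriate cells, then replacing $B$ by $B'$ does not increase $\mu$. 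For this we first need, as in Remark~\ref{rem:sign}, that the minimal graph has a Fiedler vector $\x$ that is constant on vertical pairs and strictly monotone (changing sign once) along the path; this follows from the path-like structure and an eigen-equation/symmetry argument of the type already used in \cite{Abdi}.

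The argument then proceeds block-type by block-type. First I would handle the long blocks: a long middle (resp.\ end, resp.\ complete) block contains at least $s\ge2$ bricks $B_1,\dots,B_s$ with $B_2,\dots,B_{s-1}\in\{M_0'',M_1''\}$. The key point is that each such brick ``fits'' a shorter configuration — intuitively, a long block can be contracted to a single short block of the same number of cut vertices while the number of cross-edges between the relevant cells is preserved — so by the replacement lemmas $\mu$ does not increase when we shrink a long block to a short one. Since $G$ is minimal (and, by Theorem~\ref{thm:cubicBGI}-style uniqueness arguments or a strict-inequality version of the lemma, the inequality is strict when $|C_i'|>|C_i|$ strictly somewhere), no long block can occur; in particular the ``$G$ is a long complete block'' alternative is excluded. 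This reduces us to a path-like graph whose middle blocks are among $M_0,M_1,M_2,\tilde M_2,M_3$ and whose end blocks are among $D_0,D_1,D_2,D_3$.

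Next, the middle blocks. Each of $M_1,M_2,\tilde M_2,M_3$ has an equitable partition with cells aligned so that $M_0$ fits it (with the two cut vertices landing in the two ``outer'' cells); one exhibits these partitions explicitly, just as Table~\ref{tab} does for end blocks. Applying the middle-block version of Lemma~\ref{lem:muGnEndBlock} to the Fiedler vector of $G$ (constant on vertical pairs by Remark~\ref{rem:sign}) shows that replacing any such block by $M_0$ does not increase $\mu$; minimality then forces every middle block to be $M_0$. Finally, for the end blocks, Table~\ref{tab} already records that each of $D_1,D_2,D_3,D_4$ fits $D_0$, that $D_4$ fits $D_3$, and that $D_3$ fits each of $D_0,D_1,D_2$; these together with Lemma~\ref{lem:muGnEndBlock} mean that among $D_0,\dots,D_4$ no single one strictly dominates, but $D_3$ can only coexist in the minimal graph if it is actually optimal — and the point of allowing $D_3$ in the statement is precisely that the fitting relations do not rule it out. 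So the conclusion is that the left end block is one of $D_0,\dots,D_4$ (with $D_3$ as the one extra possibility beyond $\g_n$), and by the symmetry argument the right end block is the mirror image of one of these.

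The main obstacle I expect is twofold. First, establishing the ``middle-block replacement'' analogue of Lemma~\ref{lem:muGnEndBlock} rigorously: unlike an end block, a middle block is attached through \emph{two} cut vertices, so when we enlarge it we must simultaneously control the value of the Fiedler vector at both cut vertices and re-verify that $\x'$ stays well-defined and that the Cauchy--Schwarz estimate on $\|\x'\|^2-\delta^2/n'$ still goes through; this requires the monotonicity (sign-change-once) property of Remark~\ref{rem:sign} to pin down the signs of the $a_i$. Second, verifying the fitting relations for all the long bricks — i.e.\ that every long block really can be contracted block-by-block while preserving cross-edge counts and keeping an equitable partition on the target — is a finite but delicate case analysis, since one must produce, for each of $M_0',M_1',M_2',D_0',D_3',M_0'',M_1''$ and their mirror images, an explicit partition witnessing the ``fits'' relation, and one must track where the cut vertices go at each contraction step. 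Once these two technical ingredients are in place, the theorem follows by finitely many applications of the two replacement lemmas together with the minimality of $G$.
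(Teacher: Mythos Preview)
Your proposal has a fundamental gap in the use of Lemma~\ref{lem:muGnEndBlock}. By Definition~\ref{def:fit}, ``$D'$ fits $D$'' requires $|C_i|\le|C'_i|$ for every cell, so $D'$ always has at least as many vertices as $D$; the lemma then says that replacing $D$ by the \emph{larger} block $D'$ can only decrease $\mu$. Consequently any such replacement changes the order of the graph. Minimality of $G$, however, is taken within the class of connected quartic graphs of a \emph{fixed} order $n$, so producing a graph $G'$ of order $n'>n$ with $\mu(G')\le\mu(G)$ contradicts nothing. In particular, your claim that ``$M_0$ fits each of $M_1,M_2,\tilde M_2,M_3$'' is impossible on cardinality grounds ($M_0$ has $6$ vertices while $M_1,M_2,M_3$ have $8,9,7$), and the long-block ``contraction'' you describe likewise changes $n$. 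The fitting machinery is used in the paper only in Section~\ref{sec:g_n}, to compare the graphs $\h_{i,j}$ of \emph{different} orders for the asymptotic statement; it is not the tool for Theorem~\ref{thm:quarticNEW}.

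What the paper actually does is entirely different and considerably more computational. Every forbidden configuration is removed by a replacement that keeps $n$ fixed: Lemma~\ref{rem:main1} explicitly requires $H'$ to have the same number of vertices and edges as $H$. For each case ($D'_3$ in a long block, $D'_0+M''_0$ or $D'_0+M''_1$ in a long block, the subgraphs $H_1,H_2,H_3,H_4$, and $D_0M_3$, $D_2M_3$) one writes the relevant Fiedler components as explicit rational functions of $\mu$ via the eigen-equation, chooses the new components on $H'$ by solving analogous linear systems, and then verifies a strict polynomial inequality in $\mu$ using the numerical upper bounds of Lemma~\ref{lem:muUpBound}. The sign/monotonicity information from Remark~\ref{rem:sign} is used not to run a Cauchy--Schwarz argument but to orient each subgraph so that the relevant component (e.g.\ $x_{r+3}$ or $x_{r+4}$) is nonnegative, which is what makes the polynomial inequalities go through. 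Your plan would need to be replaced by this kind of same-order replacement and explicit computation; the ``middle-block analogue of Lemma~\ref{lem:muGnEndBlock}'' you propose, even if formulated and proved, would still change $n$ and hence would not yield the contradiction you need.
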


Theorems~\ref{thm:muGn} and \ref{thm:quarticNEW} imply the Aldous--Fill conjecture for $k=4$:

\begin{corollary}\label{corollary}
The minimum algebraic connectivity of connected quartic graphs of order $n$ is $(1+o(1))\frac{4\pi^2}{n^2}$.
\end{corollary}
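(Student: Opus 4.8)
The statement is a direct corollary of the two main theorems of the paper, so the plan is simply to combine Theorem~\ref{thm:quarticNEW} (structure) with Theorem~\ref{thm:muGn} (the asymptotic value of the algebraic connectivity for graphs of that structure), together with a trivial finiteness remark that guarantees the minimum is attained. The only thing requiring a line of bookkeeping is the passage from the parameter $m$ (the number of middle blocks) to the order $n$.

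\textbf{Key steps.} First I would observe that for every $n\ge5$ there is at least one connected quartic graph of order $n$ (for instance the circulant $C_n(1,2)$), and only finitely many up to isomorphism; hence the minimum of $\mu$ over connected quartic graphs of order $n$ is attained by some graph $G=G_n$. Since the claim is asymptotic, we may assume $n\ge11$. Second, by Theorem~\ref{thm:quarticNEW}, $G$ has a path-like structure in which every middle block is $M_0$ and each of the two end blocks is (the mirror image of) one of $D_0,\ldots,D_4$; in particular $G$ satisfies the hypotheses of Theorem~\ref{thm:muGn}. Third, I would apply Theorem~\ref{thm:muGn} to conclude $\mu(G)=(1+o(1))\frac{4\pi^2}{n^2}$. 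Finally, to see that this is the same estimate as the one written in terms of $m$ inside the proof of Theorem~\ref{thm:muGn}: $G$ contains $m$ copies of $M_0$, contributing $5m$ vertices, plus two end blocks of bounded size, so $n=5m+O(1)$, whence $25m^2=(1+o(1))n^2$ and $(1+o(1))\frac{4\pi^2}{25m^2}=(1+o(1))\frac{4\pi^2}{n^2}$. Since $G$ realizes the minimum algebraic connectivity among connected quartic graphs of order $n$, that minimum equals $(1+o(1))\frac{4\pi^2}{n^2}$, which is the assertion.

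\textbf{Main obstacle.} There is essentially no hard step; all the work is already carried out in Theorems~\ref{thm:muGn} and~\ref{thm:quarticNEW}. The only point deserving care is that the $o(1)$ must be read uniformly as $n\to\infty$ over the whole family of admissible graphs, not just along one fixed sequence of end blocks. This is fine because the proof of Theorem~\ref{thm:muGn} squeezes $\mu(G)$ between $\mu(\h_{4,4})$ and $\mu(\h_{0,0})$, both of which are given by a single closed-form expression in $m$ whose implied constants do not depend on the choice of end blocks; so the error term is indeed uniform, and the corollary follows.
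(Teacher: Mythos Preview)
Your proposal is correct and is exactly the paper's approach: the corollary is stated immediately after Theorem~\ref{thm:quarticNEW} as a direct consequence of Theorems~\ref{thm:muGn} and~\ref{thm:quarticNEW}, with no further argument given. Your additional bookkeeping (existence of the minimizer, the relation $n=5m+O(1)$, and the uniformity of the $o(1)$ term via the sandwich $\mu(\h_{4,4})\le\mu(G)\le\mu(\h_{0,0})$) is accurate and simply makes explicit what the paper leaves implicit.
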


The rest of this section is devoted to the proof of Theorem~\ref{thm:quarticNEW}.
More precisely, the assertion on the end blocks will be established in Theorem~\ref{thm:endblocks} and on
the middle block  in Theorem~\ref{thm:middleblock} below.

In the remainder of this section,  we suppose that $n\geq 11$,
 $\G$ is a minimal quartic graph of order $n$ and $\mu:=\mu(\G)$.  By Theorem~\ref{thm:quarticOLD}, the equitable partition of $\G$ mentioned in Remark~\ref{rem:sign} has cells of size $1$ or $2$ consisting of  the vertices drawn vertically above each other (with the exceptions for the first three vertices of $D'_0$ and the first four vertices of $D_1, D'_3$ which make  a cell too). So by Remark~\ref{rem:sign}, $\G$ has a {\em decreasing} unit Fiedler vector $\x$  which is {\em constant} on the cells.

In our arguments in this section, we will need upper bounds on $\mu$.

 \begin{lemma} \label{lem:muUpBound}
 Based on the value of $n$, we have the upper bounds given in Table~\ref{tab:muUpBound} for $\mu$.
\end{lemma}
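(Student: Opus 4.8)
The plan is to exploit the minimality of $\G$ directly: since $\mu=\mu(\G)$ is by definition the smallest algebraic connectivity over all connected quartic graphs of order $n$, every concrete quartic graph $G$ on $n$ vertices yields an upper bound $\mu\le\mu(G)$. So it suffices, for each $n$ in the relevant range, to produce one quartic graph of order \emph{exactly} $n$ whose algebraic connectivity we can bound explicitly from above. The natural choice is the graph $\g_n$ itself, which by construction has order $n$: writing $n-11=5m+r$ with $0\le r\le4$, we have $m=\lfloor(n-11)/5\rfloor$ and $\g_n=\h_{i,j}$ for the pair of end blocks $D_i,D_j\in\{D_0,D_1,D_2,D_4\}$ prescribed by $r$, with $m$ middle blocks $M_0$.

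To bound $\mu(\g_n)$ I would reuse the first half of the proof of Theorem~\ref{thm:muGn} almost verbatim. Since each of $D_1,D_2,D_4$ fits $D_0$ and, by Remark~\ref{rem:sign}, $\h_{0,0}$ carries a Fiedler vector that is constant on the cells of the equitable partition of its end block, two applications of Lemma~\ref{lem:muGnEndBlock} give $\mu(\g_n)=\mu(\h_{i,j})\le\mu(\h_{0,0})$ for the graph $\h_{0,0}$ with the same number $m$ of middle blocks. Feeding the skew-symmetric cosine vector $\bar\x$ of Figure~\ref{fig:H00} into the Rayleigh quotient and running the simplifications that lead from \eqref{eq:cos-sin} to \eqref{eq:sin^2} then gives the closed form
$$\mu(\g_n)\le\mu(\h_{0,0})\le\frac{40(m+1)\sin^2(\frac{\pi}{2m+2})}{77\left(\frac{m+1}{2}-\cos^2(\frac{\pi}{2m+2})\right)+24(m\cos(\frac{\pi}{m+1})-1)}=:f(m).$$
The entries of Table~\ref{tab:muUpBound} are then read off by substituting $m=m(n)$ into $f$, or into a slightly weakened but cleaner rational bound obtained from one-sided Taylor estimates ($\sin t\le t$, $\cos t\le1-\tfrac{t^2}{2}+\tfrac{t^4}{24}$, $\cos t\ge1-\tfrac{t^2}{2}$); because $m(n)$ is non-decreasing and $f$ is decreasing, a single inequality valid at the left endpoint of a range of $n$ remains valid throughout that range.

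An equivalent route, which is probably what the table actually records case by case, is to skip the reduction to $\h_{0,0}$ and instead plug a tailored skew-symmetric test vector straight into Lemma~\ref{remark:delta} and the variational characterization \eqref{eq}: take the cosine values on the cut vertices, interpolate them across each diamond-type block as in Figure~\ref{fig:H00}, and extend appropriately over the five end-block shapes (as is done for $D_4$ in Figure~\ref{fig:H44}). This yields one explicit trigonometric-rational bound per residue class $r\in\{0,\dots,4\}$. For the finitely many small values of $n$ where the asymptotic form of these bounds is not yet numerically small enough for the structural arguments in Section~\ref{sec:structure}, I would instead compute $\mu(\g_n)$ exactly: $\g_n$ has an equitable partition whose quotient matrix is, up to the factor $\tfrac4{25}$ and a bounded-rank correction at the two ends, essentially the Laplacian of a short path, so $\mu(\g_n)$ is an eigenvalue of an explicit small matrix.

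The genuine difficulty here is not any single estimate but the combination of requirements the table must meet: the bounds have to be (i) explicit and monotone in $n$, so that one line of the table can cover a whole range; (ii) uniform over the five possible end-block pairs of $\g_n$; and (iii) numerically tight enough to actually rule out the unwanted middle- and end-block configurations later in the section. Keeping the error under control when replacing the exact expression $f(m)$ by a clean closed form, with particular care at the small-$n$ endpoints where the Taylor approximations are weakest, is the only delicate point.
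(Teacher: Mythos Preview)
Your approach is essentially the paper's: bound $\mu\le\mu(\g_n)\le\mu(\h_{0,0})\le f(m)$, with $f$ decreasing, and fall back on direct computation of $\mu(\g_n)$ for small $n$. One correction of emphasis, though: the expression $f(m)$ is far too loose for small $m$ (for $m=0$ the test vector even degenerates, and $f(1)\approx 2.76$), so none of the five numbers in Table~\ref{tab:muUpBound} can be read off from $f$ or from Taylor-type simplifications of it. In the paper, every entry of the table (which concern $n\in\{11,13,18,21,26\}$, all with $m\le3$) is obtained by direct computer evaluation of $\mu(\g_n)$ for $11\le n\le 40$, together with the observation that $\mu(\g_n)$ is strictly decreasing in this range; the closed form $f(m)$ is invoked only once, at $m=6$, to certify that the bound $0.059$ for $n\ge26$ continues to hold for all $n\ge 41$. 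So your plan is right, but the division of labor is the reverse of what you suggest: the concrete table comes entirely from the finite computation, and $f$ merely extends the last row to infinity.
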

\begin{table}[ht]
\centering
\begin{tabular}{cccccc}
\hline
$n\ge$ & $11$ & $13$ & $18$ & $21$ & $26$  \\ \hline
$\mu<$ & $0.355$&$0.268$&$0.129$&$0.091$&$0.059$\\
\hline
\end{tabular}
\caption{Upper bounds for $\mu$ based on the values of $n$}
\label{tab:muUpBound}
\end{table}
\begin{proof}
Note that $\mu\le\mu(\g_n)$. So it suffices to find upper bound for $\mu(\g_n)$.
 Let $n-11=5m+r$ for some $0\le r\le4$, and $\h_{0,0}$ (with $m$ middle blocks) be as in the proof of Theorem~\ref{thm:muGn}.
Therefore, $\mu(\g_n)\le\mu(\h_{0,0})$.
Let us denote the expression given in \eqref{eq:sin^2} by $f(m)$ for which $\mu(\h_{0,0})\le f(m)$. By taking the derivative of $f$ with respect to $m$ it is seen that $f$ is decreasing for $m\ge1$. So for $m\ge6$, we have $f(m)\le f(6)<0.046$. This implies that for $n\ge41$, $\mu(\g_n)<0.046$.
 For $11\le n\le40$, we compute $\mu(\g_n)$ directly by a computer.
 It turns out that $\mu(\g_n)$ is strictly decreasing for $11\le n\le40$. The rounded up values of $\mu(\g_n)$   for
$n=11,13,18,21,26$  are exactly those given in the second row of  Table~\ref{tab:muUpBound}. Thus the result follows.
\end{proof}
\

\subsection{End blocks}

By Theorem~\ref{thm:quarticOLD}, the left end block of $\G$ is either a short block, i.e. it is one of $D_0,\ldots, D_3$, or it is a long block starting with  $D'_0$ or $D'_3$. In this subsection, we show that the end blocks of $\G$ should be short blocks or one exceptional long block that is $D_4$.

Our first lemma concerns long blocks starting with $D'_3$.
\begin{lemma}\label{lem:E3}
The graph $\G$ does not contain a long end block starting with $D'_3$.
\end{lemma}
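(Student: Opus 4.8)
The plan is to argue by contradiction: suppose $\G$ has a long end block $E$ starting with the brick $D'_3$, attached to the rest of $\G$ through the cut vertex $v$. The strategy is to replace $E$ by the (much shorter) short block $D_3$ and show that this operation does not increase the algebraic connectivity, contradicting the minimality of $\G$ — since the replacement strictly decreases the number of vertices, and among quartic graphs of a \emph{fixed} order the minimum is attained, one then derives a contradiction with $\G$ being minimal for \emph{its} order by comparing against $\g_n$, or more directly one shows $\mu$ would have to drop below the bound forced by Lemma~\ref{lem:muUpBound}. Concretely, I would invoke Lemma~\ref{lem:muGnEndBlock}: it suffices to exhibit an equitable partition $\Pi$ of $D_3$, a partition $\Pi'$ of $V(E)$ with $D_3$ fitting $E$ (in the sense of Definition~\ref{def:fit}, i.e.\ $|C_i|\le|C'_i|$ and the same number of cross-edges between corresponding cells), with $v\in C_p\cap C'_p$, and such that the Fiedler vector $\x$ of $\G$ (which by Remark~\ref{rem:sign} is constant on the cells of the global equitable partition and decreasing) is constant on each cell of $\Pi'$.

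The key steps, in order, would be: (1) recall the internal structure of a long end block — by the paragraph preceding Theorem~\ref{thm:quarticOLD}, $E$ is built from bricks $D'_3, B_2,\ldots,B_{s-1}, B_s$ with $B_2,\ldots,B_{s-1}\in\{M''_0,M''_1\}$ and $B_s\in\{\tilde M'_0,\tilde M'_1,\tilde M'_2\}$, each joined to the next by two edges; (2) identify the equitable partition of $\G$ restricted to $E$ — its cells are, up to the stated exceptions, the vertical pairs, plus the "first four vertices of $D'_3$" cell noted in the standing assumptions, so $\x$ takes one value $a_1$ on that initial $4$-set of $D'_3$ and then one value per subsequent vertical pair, ending with the value $a_p$ at $v$; (3) build $\Pi$ on $D_3$ by grouping its nine vertices into cells whose sizes are $\le$ the corresponding sizes in $E$ and whose between-cell edge counts match (reading off Table~\ref{tab}, which already displays exactly which blocks fit $D_0,\ldots,D_3$ — in particular one needs $D_3$ to "fit" the long block, which amounts to checking that the short block $D_3$ fits each admissible brick-prefix); (4) verify the hypotheses of Lemma~\ref{lem:muGnEndBlock} — equitability of $\Pi$ on $D_3$ (immediate from the defining picture of $D_3$) and constancy of the given Fiedler vector on the $\Pi'$-cells (immediate from step (2)); (5) conclude $\mu(\G')\le\mu(\G)$ where $\G'$ is obtained by the swap, and then reach the contradiction with minimality — $\G'$ has fewer vertices, so extending it back to order $n$ (by re-expanding a block or appending $M_0$'s) would give a quartic graph with algebraic connectivity no larger than $\mu(\G)$ but a strictly different, "shorter-at-one-end" structure, contradicting either uniqueness-of-the-minimizer bookkeeping or, more cleanly, the bound $\mu<\mu(\g_n)$ is violated; I would phrase the final contradiction via the inequality $\mu(\G')\le\mu(\G)=\mu$ combined with the fact that $\G'$ is still in the path-like family so $\mu(\G')\ge\mu(\h_{4,4})$-type lower bounds of Theorem~\ref{thm:muGn} cannot be beaten.

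The main obstacle I anticipate is step (3)–(4): constructing the partition $\Pi'$ of the \emph{long} end block so that $D_3$ genuinely fits it. The subtlety is that a long end block can have arbitrarily many bricks $M''_0, M''_1$ in the middle, so the cells of $\Pi'$ corresponding to the (fixed, nine-vertex) block $D_3$ must absorb all of these extra bricks into just a few cells while keeping the between-cell edge counts exactly right and keeping $\x$ constant on each such cell. Since $\x$ is strictly decreasing along the cells of the global equitable partition, lumping several consecutive vertical pairs of $E$ into one cell of $\Pi'$ would violate the constancy requirement — so one cannot simply collapse the middle bricks. The resolution must be that $D_3$ fits $E$ with $\Pi'$-cells that are themselves vertical pairs (or the exceptional initial set), which forces $D_3$ to have as many cells as $E$ has, which is impossible if $E$ is long. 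Hence the correct argument is the opposite direction: one shows that a long end block starting with $D'_3$ can be \emph{shortened} one brick at a time — replacing the two-brick prefix $D'_3, M''_i$ (together with its two connecting edges) by the shorter prefix $D'_3$ with appropriate re-wiring — and that each such single-brick contraction fits into the framework of Lemma~\ref{lem:muGnEndBlock} with a bounded-size equitable partition. Getting this local replacement to be a legitimate "fit" (matching degrees and edge-counts so the result is still a valid quartic graph, and matching $\x$ on the boundary cell $v$) is the technical heart of the lemma; the rest is bookkeeping with Table~\ref{tab} and Remark~\ref{rem:sign}.
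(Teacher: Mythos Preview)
Your approach has a genuine gap, and it is not the obstacle you anticipated.

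First, the direction of Lemma~\ref{lem:muGnEndBlock} is backwards for what you want. In Definition~\ref{def:fit}, ``$D'$ fits $D$'' requires $|C_i|\le|C'_i|$, so the \emph{replacement} block $D'$ is the larger one; the lemma then says that swapping the smaller $D$ for the larger $D'$ can only decrease $\mu$. You want to replace the long block $E$ by the short block $D_3$ and conclude $\mu(\G')\le\mu(\G)$, but the lemma only yields the reverse inequality $\mu(\G)\le\mu(\G')$ (replacing $D_3$ by $E$ decreases $\mu$). Your step~(3) sets up ``$E$ fits $D_3$'', not ``$D_3$ fits $E$'', so the conclusion you draw is in the wrong direction. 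This is not repairable by shortening one brick at a time: each such contraction still goes the wrong way through the fitting lemma.

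Second, even if you could show $\mu(\G')\le\mu(\G)$ with $\G'$ of strictly smaller order, this does not contradict minimality of $\G$ among quartic graphs of order $n$. Your proposed fixes (re-expanding to order $n$, invoking $\h_{4,4}$-type lower bounds) are circular or unavailable at this point in the argument.

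The paper's proof avoids both problems with a different idea: it does a \emph{vertex-count-preserving} local swap. The first brick $D'_3$ has $8$ vertices with two degree-$3$ vertices on its right, each sending one edge into the next brick. One replaces this $8$-vertex piece by the short block $D_2$ (also $8$ vertices), routing both outgoing edges through the single cut vertex of $D_2$; the resulting $\G'$ is still quartic of order $n$. Then one argues directly via Lemma~\ref{remark:delta} with an explicit test vector $\x'$: on $D_2$ one assigns the values $x_1,x_1,x_1,x_1,x_2,x_2,x_2,x_3$ (rearranging the Fiedler values that $\x$ took on $D'_3$), and on the rest of $\G'$ one keeps $\x$. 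The eigen-equation on $D'_3$ gives $x_3=\frac12(\mu^2-5\mu+2)x_1$, and $\mu<0.355$ forces $x_3>0$. A short computation then shows $\x'L(\G')\x'^\top=\mu-2(x_2-x_3)^2<\mu$ while $\|\x'\|^2-\delta^2/n\ge1$, whence $\mu(\G')<\mu$, contradicting minimality. No fitting lemma is used, and no change of order occurs.
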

\begin{proof}
For a contradiction suppose that $\G$ contains $D'_3$ with the components of $\x$ as depicted in Figure~\ref{fig:longD3}.
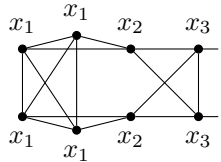
\begin{figure}[H]
\centering
\begin{tikzpicture}[scale=0.9]
	\vertex[fill] (r1) at (0,1) [label=above:\footnotesize{$x_1$}] {};
	\vertex[fill] (r2) at (.8,1.2) [label=above:\footnotesize{$x_1$}] {};
	\vertex[fill] (r3) at (0,0) [label=below:\footnotesize{$x_1$}]{};
	\vertex[fill] (r4) at (.8,-.2) [label=below:\footnotesize{$x_1$}]{};
    \vertex[fill] (r5) at (1.6,1) [label=above:\footnotesize{$x_2$}] {};
    \vertex[fill] (r6) at (1.6,0) [label=below:\footnotesize{$x_2$}]{};
    \vertex[fill] (r7) at (2.6,1) [label=above:\footnotesize{$x_3$}] {};
    \vertex[fill] (r8) at (2.6,0) [label=below:\footnotesize{$x_3$}]{};
    \tikzstyle{vertex}=[circle, draw, inner sep=0pt, minimum size=0pt]
    \vertex (s) at (2.9,1)[] {};
	\vertex (ss) at (2.9,0) [] {};
	\path
		(r5) edge (r2)
		(r1) edge (r2)
	    (r1) edge (r5)
		(r1) edge (r3)
        (r1) edge (r4)
		(r2) edge (r3)
	    (r2) edge (r4)
	    (r3) edge (r6)
	    (r4) edge (r3)
	    (r4) edge (r6)
	    (r5) edge (r8)
	    (r5) edge (r7)
	    (r6) edge (r8)
	    (r6) edge (r7)
	    (r7) edge (r8)
	   (r7) edge (s)
	   (r8) edge (ss);
\end{tikzpicture}
\caption{$D'_3$ in a long block (the last two vertices have different neighbors on their right)}\label{fig:longD3}
\end{figure}

We may assume that $x_1>0$ (since otherwise we consider $-\x$).
By  using the eigen-equation \eqref{eq:eigenvector}, we have
$$x_3=\left(\mu^2-5\mu+2\right)\frac{x_1}2.$$
As $\mu<0.355$ (by Lemma~\ref{lem:muUpBound}), we have $\mu^2-5\mu+2>0$ which implies that $x_3>0$.

Now, we replace $D'_3$ by $D_2$ to obtain $\G'$.
We define a vector $\x'$ on $V(\G')$ such that its components on the new end block $D_2$ are as given in Figure~\ref{fig:shortD2}, and on the rest of vertices agree with $\x$.
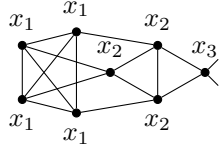
\begin{figure}[h!]
\centering
\begin{tikzpicture}[scale=0.9]
	\vertex[fill] (r1) at (0,0) [label=below:\footnotesize{$x_1$}] {};
	\vertex[fill] (r2) at (0,.8) [label=above:\footnotesize{$x_1$}] {};
	\vertex[fill] (r3) at (.8,1) [label=above:\footnotesize{$x_1$}] {};
	\vertex[fill] (r4) at (.8,-.2) [label=below:\footnotesize{$x_1$}] {};
    \vertex[fill] (r5) at (1.3,.4) [label=above:\footnotesize{$x_2$}] {};
    \vertex[fill] (r6) at (2,.8) [label=above:\footnotesize{$x_2$}] {};
    \vertex[fill] (r7) at (2,0) [label=below:\footnotesize{$x_2$}] {};
    \vertex[fill] (r8) at (2.7,.4) [label=above:\footnotesize{$x_3$}] {};
\tikzstyle{vertex}=[circle, draw, inner sep=0pt, minimum size=0pt]
      \vertex (s) at (2.9,.6)[] {};
	\vertex (ss) at (2.9,.2) [] {};
	\path
		(r5) edge (r2)
		(r1) edge (r2)
	    (r1) edge (r5)
		(r1) edge (r3)
        (r1) edge (r4)
		(r2) edge (r3)
	    (r2) edge (r4)
	    (r3) edge (r6)
	    (r4) edge (r3)
	    (r4) edge (r7)
	     (r5) edge (r7)
	    (r5) edge (r6)
	     (s) edge (r8)
	    (ss) edge (r8)
	    (r6) edge (r7)
	     (r7) edge (r8)
	      (r6) edge (r8)  ;
\end{tikzpicture}
\caption{The block $D_2$ in $\G'$ and the components of $\x'$}
\label{fig:shortD2}
\end{figure}
   We  observe that $\x'\1^\top-\x\1^\top=x_2-x_3$ and since $\x\perp\1$, $\delta=\x'\1^\top=x_2-x_3$.
     Also $\x'L(\G')\x'^\top=\x L(\G)\x^\top-2(x_2-x_3)^2=\mu-2(x_2-x_3)^2$, and
     \begin{align*}
\|\x'\|^2-\frac{\delta^2}{n}&=\|\x\|^2+x_2^2-x_3^2-\frac{\delta^2}{n}\\
 &=1+(x_2-x_3)\left(x_2+x_3-\frac{x_2-x_3}{n}\right) \\
 &\geq 1+(x_2-x_3)^2 \left(1-\frac{1}{n}\right),
\end{align*}
where the last inequality follows from the fact that $x_3>0$.
Therefore,  we have
$\x' L(\G')\x'^\top<\mu$ and
$\|\x'\|^2-\frac{\delta^2}{n}>1$. So by  Lemma~\ref{remark:delta}, $\mu(\G')\leq \frac{\x' L(\G')\x'^\top}{\|\x'\|^2-\frac{\delta^2}{n}}<\mu$, a contradiction.
\end{proof}

We now deal with the long blocks starting with $D'_0$ in the next two lemmas.
Suppose that $\G$ contains a long end block starting with $D'_0$.
Then, by Theorem~\ref{thm:quarticOLD},  it contains either $D'_0+M''_0$, $D'_0+M''_1$, or $D'_0+\tilde M'_2$ (these are the graphs obtained by joining the two degree 3 vertices of $D'_0$ by two parallel edges to $M''_0$, $M''_1$, or $\tilde M'_2$, respectively; see Figure~\ref{fig:longD4-1}).
If $\G$ contains $D'_0+M''_0$, it is either $D_4$ (as desired) or it contains the subgraph of Figure~\ref{fig:longD4-1} that we will show is not possible in Lemma~\ref{lem:E1}.
If $\G$ contains $D'_0+M''_1$ or $D'_0+\tilde M'_2$, then it must contain the subgraph of Figure~\ref{fig:longD4-2} that we shall show is not possible in Lemma~\ref{lem:E2}.

 \begin{lemma}\label{lem:E1}
 The graph  $\G$ does not contain the graph $H$ of Figure~\ref{fig:longD4-1} as a subgraph.
 \end{lemma}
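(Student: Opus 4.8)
The plan is to argue by contradiction, in the style of Lemma~\ref{lem:E3}. Suppose $\G$ contains the graph $H$ of Figure~\ref{fig:longD4-1} as a subgraph; then the left end block of $\G$ is a long end block whose first two bricks are $D'_0$ and $M''_0$, and the two free vertices of the $K_4$ forming $M''_0$ are joined to two vertices $u,w$ of degree at least $3$ (rather than to a single degree-$2$ vertex, which would make the end block equal to $D_4$). By Remark~\ref{rem:sign}, the decreasing unit Fiedler vector $\x$ of $\G$ is constant on the cells of the equitable partition of $\G$; let $x_1>x_2>x_3>x_4>\cdots$ be the values of $\x$ on the successive cells of $H$ — the $3$-element cell of $D'_0$, the degree-$3$ pair of $D'_0$, the left pair and the right pair of the $K_4$, the cell $\{u,w\}$, and so on — and, replacing $\x$ by $-\x$ if necessary, assume $x_1>0$. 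Writing the eigen-equation \eqref{eq:eigenvector} at a vertex of the first cell and then at a vertex of the degree-$3$ pair of $D'_0$ gives
\[
x_2=\frac{2-\mu}{2}\,x_1,\qquad x_3=\frac{\mu^2-6\mu+2}{2}\,x_1 .
\]
Since the long block carrying $H$ already has at least $14$ vertices, Lemma~\ref{lem:muUpBound} gives $\mu<0.268$; hence $\mu^2-6\mu+2>0$ and $x_3>0$, and, proceeding the same way, the components of $\x$ on the further cells of $H$ that will be used below are positive as well.

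Next I would produce a graph $\G'$ of the \emph{same} order $n$ by keeping the vertex set of $H$ and all the edges that leave $H$, but re-routing the edges internal to $H$ into a ``denser'', $D_4$-like pattern, so that the interface with the rest of $\G$ is untouched. Mirroring Lemma~\ref{lem:E3}, I would let $\x'$ agree with $\x$ except that on one suitably chosen vertex of $H$ its value is raised from $x_{j+1}$ to the larger value $x_j$, for the relevant index $j$; since $\x\perp\1$, this makes $\delta:=\x'\1^\top=x_j-x_{j+1}$ and $\|\x'\|^2=1+x_j^2-x_{j+1}^2$. The re-routing is to be chosen so that a count of the affected squared edge differences yields $\x' L(\G')\x'^\top=\x L(\G)\x^\top-c\,(x_j-x_{j+1})^2=\mu-c\,(x_j-x_{j+1})^2<\mu$ for some constant $c>0$ — just as the passage from $D'_3$ to $D_2$ produced the term $-2(x_2-x_3)^2$ in Lemma~\ref{lem:E3}. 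Then, exactly as in that proof,
\[
\|\x'\|^2-\frac{\delta^2}{n}=1+(x_j-x_{j+1})\Bigl(x_j+x_{j+1}-\tfrac{x_j-x_{j+1}}{n}\Bigr)\ge 1+(x_j-x_{j+1})^2\Bigl(1-\tfrac1n\Bigr)>1 ,
\]
the last two inequalities using $x_{j+1}>0$. By Lemma~\ref{remark:delta} we then get $\mu(\G')\le \x'L(\G')\x'^\top/(\|\x'\|^2-\delta^2/n)<\mu$, contradicting the minimality of $\G$.

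The step I expect to be the main obstacle is the combinatorial construction in the second paragraph: exhibiting a re-routing of the edges of $H$ that simultaneously preserves $4$-regularity, leaves the external interface unchanged, and strictly lowers the Dirichlet energy $\sum_{ij\in E(H)}(x_i-x_j)^2$ of the fixed vector $\x$, with net change exactly $-c(x_j-x_{j+1})^2$ for some $c>0$. Carrying this out will use the explicit formulas for $x_2,x_3,\dots$ from the eigen-equation together with the monotonicity of $\x$, and it will presumably split into a small number of cases according to the nature of the vertices $u,w$ and how $H$ continues to the right inside $\G$; the positivity of the deep components of $\x$, guaranteed by $\mu<0.268$ via Lemma~\ref{lem:muUpBound}, is exactly what makes the norm estimate $\|\x'\|^2-\delta^2/n>1$ go through. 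Once such a re-routing is found, the contradiction with the minimality of $\G$ is immediate.
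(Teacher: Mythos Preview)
Your proposal is a sketch that does not actually carry out the decisive step, and the strategy you hope will work---changing only one component of $\x$ and obtaining a clean drop of the form $-c(x_j-x_{j+1})^2$, as in Lemma~\ref{lem:E3}---is too optimistic here. In Lemma~\ref{lem:E3} the passage from $D'_3$ to $D_2$ happened to shift a single vertex between neighbouring cells of the equitable partition, which is what produced that tidy expression; for $H=D'_0+M''_0$ there is no analogous single-edge re-routing on nine vertices that both preserves the two outgoing edges on the right and yields such a simple energy drop. The paper's proof replaces $H$ by the short block $D_3$ (same nine vertices, same two-edge interface on the right), but then \emph{all three} internal cell values $z_1,z_2,z_3$ are reset to new values determined by an eigen-equation-like system in $\mu$; the resulting comparison $\x'L(\G')\x'^\top-\mu\|\x'\|^2+\mu\delta^2/n<0$ does not factor cleanly and has to be established by a genuine polynomial analysis in $\mu$.

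Two further points where your outline would break. First, the bound $\mu<0.268$ is not strong enough: the polynomial controlling the sign in the paper's argument needs $\mu<0.132$, so one must first dispose of $n\le 17$ by hand and then invoke $\mu<0.129$ from Lemma~\ref{lem:muUpBound} for $n\ge 18$. Relatedly, your assertion that ``the components of $\x$ on the further cells of $H$ \ldots\ are positive as well'' is not guaranteed by $\mu<0.268$: for instance $x_4=\frac{-\mu^3+9\mu^2-19\mu+4}{4}\,x_1$ changes sign near $\mu\approx 0.227$, so the inequality $x_{j+1}>0$ you need for the norm estimate can fail in your regime. Second, ``re-routing into a $D_4$-like pattern'' cannot be taken literally, since $D_4$ has ten vertices while $H$ has nine; the correct target is $D_3$, and once you move to $D_3$ you are forced into the multi-component optimisation that the paper carries out.
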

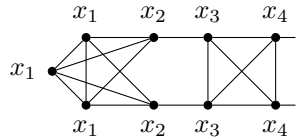
\begin{figure}[H]
\centering
\begin{tikzpicture}[scale=0.9]
	\vertex[fill] (r1) at (0,0) [label=left:\footnotesize{$x_1$}] {};
	\vertex[fill] (r3) at (.5,.5) [label=above:\footnotesize{$x_1$}] {};
	\vertex[fill] (r2) at (.5,-.5) [label=below:\footnotesize{$x_1$}] {};
	\vertex[fill] (r5) at (1.5,.5) [label=above:\footnotesize{$x_2$}] {};
	\vertex[fill] (r4) at (1.5,-.5) [label=below:\footnotesize{$x_2$}] {};
\vertex[fill] (r6) at (2.3,-.5) [label=below:\footnotesize{$x_3$}] {};
\vertex[fill] (r7) at (2.3,.5) [label=above:\footnotesize{$x_3$}] {};
\vertex[fill] (r8) at (3.3,-.5) [label=below:\footnotesize{$x_4$}]{};
\vertex[fill] (r9) at (3.3,.5) [label=above:\footnotesize{$x_4$}] {};
\tikzstyle{vertex}=[circle, draw, inner sep=0pt, minimum size=0pt]
    \vertex (s) at (3.6,.5) [] {};
	\vertex (ss) at (3.6,-.5) [] {};
	\path
		(r1) edge (r2)
		(r1) edge (r3)
	   (r1) edge (r4)
	   (r1) edge (r5)
		(r2) edge (r3)
		(r2) edge (r4)
        (r2) edge (r5)
		(r3) edge (r4)
	    (r3) edge (r5)
	    (r6) edge (r4)
		(r5) edge (r7)
	    (r6) edge (r7)
		(r6) edge (r8)
	    (r6) edge (r9)
		(r8) edge (r7)
	    (r7) edge (r9)
		(r8) edge (r9)
	    (r9) edge (s)
		(r8) edge (ss);
\end{tikzpicture}
\caption{$D'_0+M''_0$ in a long block (the last two vertices have different neighbors on their right)}
\label{fig:longD4-1}
\end{figure}

\begin{proof}
If $11\leq n\leq 17$,  the only graphs of Theorem~\ref{thm:quarticOLD} which contain $H$ as a subgraph are the top two graphs of Figure~\ref{fig:n11-17} for which the two graphs on the bottom, respectively, have smaller algebraic connectives.
\begin{figure}[h!]
	\centering
\begin{tikzpicture}[scale=0.8]
	\vertex[fill] (r1) at (0,0) [] {};
	\vertex[fill] (r3) at (.5,.5) [] {};
	\vertex[fill] (r2) at (.5,-.5) [] {};
	\vertex[fill] (r5) at (1.5,.5) [] {};
	\vertex[fill] (r4) at (1.5,-.5) [] {};
\vertex[fill] (r6) at (2.3,-.5) [] {};
\vertex[fill] (r7) at (2.3,.5) [] {};
\vertex[fill] (r8) at (3.3,-.5) []{};
\vertex[fill] (r9) at (3.3,.5) [] {};
\vertex[fill] (r10) at (4.1,.5) [] {};
\vertex[fill] (r11) at (4.1,-.5) [] {};
\vertex[fill] (r12) at (5.1,.5) [] {};
\vertex[fill] (r13) at (5.1,-.5) [] {};
\vertex[fill] (r14) at (5.6,0) [] {};
\tikzstyle{vertex}=[circle, draw, inner sep=0pt, minimum size=0pt]
    \vertex () at (0,-.4) [label=below:\footnotesize{}] {};
	\path
		(r1) edge (r2)
		(r1) edge (r3)
	   (r1) edge (r4)
	   (r1) edge (r5)
		(r2) edge (r3)
		(r2) edge (r4)
        (r2) edge (r5)
		(r3) edge (r4)
	    (r3) edge (r5)
	    (r6) edge (r4)
		(r5) edge (r7)
	    (r6) edge (r7)
		(r6) edge (r8)
	    (r6) edge (r9)
		(r8) edge (r7)
	    (r7) edge (r9)
		(r8) edge (r9)
	    (r14) edge (r13)
		(r14) edge (r12)
	    (r14) edge (r11)
		(r14) edge (r10)
	   (r12) edge (r13)
	   (r11) edge (r13)
	   (r10) edge (r13)
	   (r11) edge (r12)
	   (r10) edge (r12)
	   (r11) edge (r8)
	   (r10) edge (r9) ;
\end{tikzpicture}
\quad
\begin{tikzpicture}[scale=0.8]
	\vertex[fill] (r1) at (0,0) [] {};
	\vertex[fill] (r3) at (.5,.5) [] {};
	\vertex[fill] (r2) at (.5,-.5) [] {};
	\vertex[fill] (r5) at (1.5,.5) [] {};
	\vertex[fill] (r4) at (1.5,-.5) [] {};
\vertex[fill] (r6) at (2.3,-.5) [] {};
\vertex[fill] (r7) at (2.3,.5) [] {};
\vertex[fill] (r8) at (3.3,-.5) []{};
\vertex[fill] (r9) at (3.3,.5) [] {};
\vertex[fill] (r10) at (4.2,.5) [] {};
\vertex[fill] (r11) at (4.2,-.5) [] {};
\vertex[fill] (r12) at (5.2,.5) [] {};
\vertex[fill] (r13) at (5.2,-.5) [] {};
\vertex[fill] (r14) at (6,.7) [] {};
\vertex[fill] (r15) at (6,-.7) [] {};
\vertex[fill] (r16) at (6.8,.5) [] {};
\vertex[fill] (r17) at (6.8,-.5) [] {};
	\path
		(r1) edge (r2)
		(r1) edge (r3)
	    (r1) edge (r4)
	    (r1) edge (r5)
		(r2) edge (r3)
		(r2) edge (r4)
        (r2) edge (r5)
		(r3) edge (r4)
	    (r3) edge (r5)
	    (r6) edge (r4)
		(r5) edge (r7)
	    (r6) edge (r7)
		(r6) edge (r8)
	    (r6) edge (r9)
		(r8) edge (r7)
	    (r7) edge (r9)
	    (r8) edge (r9)
		(r8) edge (r11)
	    (r9) edge (r10)
		(r10) edge (r11)
	   	(r12) edge (r10)
	     (r10) edge (r13)
	     (r11) edge (r13)
	     (r11) edge (r12)
	     (r14) edge (r12)
	     (r16) edge (r12)
	     (r13) edge (r15)
	     (r13) edge (r17)
	     (r14) edge (r15)
	     (r14) edge (r16)
	     (r14) edge (r17)
	     (r15) edge (r17)
	     (r15) edge (r16)
	     (r17) edge (r16)   ;
\end{tikzpicture}\\ \vspace{.4cm}
\begin{tikzpicture}[scale=0.8]
    \vertex[fill] (0) at (-.1,.5) []{};
	\vertex[fill] (1) at (-.1,-.5) []{};
	\vertex[fill] (2) at (.7,.7) [] {};
	\vertex[fill] (3) at (.7,-.7) [] {};
	\vertex[fill] (4) at (1.5,.5) [] {};
    \vertex[fill] (5) at (1.5,-.5)[] {};
    \vertex[fill] (6) at (2,0)[] {};
    \vertex[fill] (28) at (2.5,.5) [] {};
	\vertex[fill] (29) at (2.5,-.5) [] {};
    \vertex[fill] (00) at (3,0) [] {};
    \vertex[fill] (30) at (3.5,.7) [] {};
    \vertex[fill] (31) at (3.5,-.7)[] {};
     \vertex[fill] (32) at (4.3,.5)[] {};
    \vertex[fill] (33) at (4.3,-.5) [] {};
	\path
        (0) edge (1)
		(0) edge (2)
		(0) edge (3)
	    (0) edge (4)
		(1) edge (2)
        (1) edge (3)
		(1) edge (5)
	    (2) edge (3)
	    (2) edge (4)
	    (3) edge (5)
	    (4) edge (5)
	    (4) edge (6)
	    (5) edge (6)
	    (6) edge (28)
	    (6) edge (29)
	    (28) edge (29)
	    (28) edge (00)
	    (28) edge (30)
	    (29) edge (00)
	    (29) edge (31)
	    (00) edge (32)
	    (00) edge (33)
	    (30) edge (31)
	    (30) edge (32)
	    (30) edge (33)
	    (31) edge (32)
	    (31) edge (33)
	    (33) edge (32);
\end{tikzpicture}
\quad \quad \quad \quad
\begin{tikzpicture}[scale=0.8]
	\vertex[fill] (r) at (0,0) [] {};
	\vertex[fill] (r1) at (.5,.5) [] {};
	\vertex[fill] (r2) at (.5,-.5) [] {};
	\vertex[fill] (r3) at (1.5,.5) [] {};
	\vertex[fill] (r4) at (1.5,-.5) [] {};
    \vertex[fill] (r5) at (2,0) [] {};
   \vertex[fill] (r6) at (2.5,.5) [] {};
   \vertex[fill] (r7) at (2.5,-.5) [] {};
  \vertex[fill] (r8) at (3.5,.5) [] {};
   \vertex[fill] (r9) at (3.5,-.5) [] {};
   \vertex[fill] (r10) at (4,0) [] {};
   \vertex[fill] (r11) at (4.5,.5) [] {};
   \vertex[fill] (r12) at (4.5,-.5) [] {};
   \vertex[fill] (r13) at (5.3,.7) [] {};
   \vertex[fill] (r14) at (5.3,-.7) [] {};
    \vertex[fill] (r15) at (6.1,.5) [] {};
   \vertex[fill] (r16) at (6.1,-.5) [] {};
	\path
		(r) edge (r1)
		(r) edge (r2)
	    (r) edge (r3)
	    (r) edge (r4)
		(r1) edge (r2)
		(r1) edge (r3)
        (r1) edge (r4)
		(r2) edge (r3)
	    (r2) edge (r4)
	    (r5) edge (r4)
		(r5) edge (r3)
	    (r5) edge (r6)
		(r5) edge (r7)
	    (r6) edge (r7)
		(r6) edge (r8)
	    (r6) edge (r9)
	    (r7) edge (r8)
	    (r7) edge (r9)
	    (r9) edge (r8)
	    (r8) edge (r10)
	    (r9) edge (r10)
	    (r10) edge (r11)
	    (r10) edge (r12)
	    (r11) edge (r12)
	    (r11) edge (r13)
	    (r11) edge (r15)
        (r14) edge (r12)
	    (r16) edge (r12)
	    (r13) edge (r15)
	    (r14) edge (r13)
	    (r16) edge (r13)
	    (r14) edge (r15)
	    (r14) edge (r16)
	    (r16) edge (r15) ;
\end{tikzpicture}
\caption{Two graphs containing $H$ (top) and two graphs with smaller algebraic connectivity (bottom)}
\label{fig:n11-17}
\end{figure}
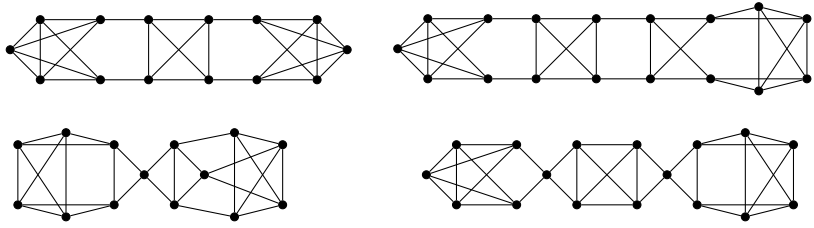
 This means that we are done for $n\le17$. So we assume that $n\ge18$, and by Lemma~\ref{lem:muUpBound},  $\mu< 0.129$.
  Now, for a contradiction assume that $\G$ contains $H$. By  using the eigen-equation \eqref{eq:eigenvector} on the first seven vertices of $H$, we obtain
\begin{equation}\label{eq:xi}
x_2=\left(\frac{-\mu}{2}+1\right)x_1,
 ~ x_3=\left(\frac{\mu^2}{2}-3\mu+1\right) x_1,
~  x_4=\left( \frac{-\mu^3+9\mu^2-19\mu}{4}+1\right)x_1.
\end{equation}\label{eq:yi}
We replace $H$ by $D_3$ to obtain $\G'$.
We define a vector $\x'$ on $V(\G')$ such that its components on the new end block $D_3$ are as given in Figure~\ref{fig:shortD3} (with $z_1,z_2,z_3$ to be specified later), and on the rest of vertices of $\G'$ agree with $\x$.
\begin{figure}
\centering
\begin{tikzpicture}[scale=0.9]
	\vertex[fill] (r1) at (0,1) [label=above:\footnotesize{$z_1$}] {};
	\vertex[fill] (r2) at (.8,1.2) [label=above:\footnotesize{$z_1$}] {};
	\vertex[fill] (r3) at (0,0) [label=below:\footnotesize{$z_1$}] {};
	\vertex[fill] (r4) at (.8,-.2) [label=below:\footnotesize{$z_1$}] {};
    \vertex[fill] (r5) at (1.6,1) [label=above:\footnotesize{$z_2$}] {};
    \vertex[fill] (r6) at (1.6,0) [label=below:\footnotesize{$z_2$}] {};
    \vertex[fill] (r7) at (2.6,1) [label=above:\footnotesize{$z_3$}] {};
    \vertex[fill] (r8) at (2.6,0) [label=below:\footnotesize{$z_3$}] {};
     \vertex[fill] (r9) at (3.1,.5) [label=above:\footnotesize{$x_4$}] {};
     \tikzstyle{vertex}=[circle, draw, inner sep=0pt, minimum size=0pt]
    \vertex (s) at (3.3,.7) [] {};
	\vertex (ss) at (3.3,.3) [] {};
	\path
		(r5) edge (r2)
		(r1) edge (r2)
	    (r1) edge (r5)
		(r1) edge (r3)
        (r1) edge (r4)
		(r2) edge (r3)
	    (r2) edge (r4)
	    (r3) edge (r6)
	    (r4) edge (r3)
	    (r4) edge (r6)
	    (r5) edge (r8)
	    (r5) edge (r7)
	    (r6) edge (r8)
	    (r6) edge (r7)
	    (r7) edge (r8)
	    (r9) edge (r7)
	    (r9) edge (r8)
	     (r9) edge (s)
	      (r9) edge (ss);
\end{tikzpicture}
\caption{The block $D_3$ in $\G'$ and the components of $\x'$}
\label{fig:shortD3}
\end{figure}
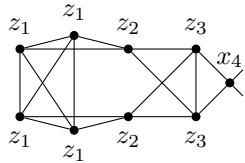

Although $\mu$ need not to be an eigenvalue of $\G'$,  we may choose  $z_1,z_2,z_3$ to satisfy the following equations which resemble the eigen-equation \eqref{eq:eigenvector} for $\mu$:
  \begin{align*}
 (1-\mu)z_1-z_2&=0, \\
(4-\mu)z_2-2z_1-2z_3&=0, \\
(3-\mu)z_3-2z_2-x_4&=0.
\end{align*}
By plugging in the value of $x_4$ from \eqref{eq:xi} and
solving the equations in $x_1$ and $\mu$,  we obtain that
\begin{equation}\label{eq:zi}
z_1=\omega x_1,\quad z_2=\omega(1-\mu)x_1,\quad z_3=\frac{\omega}{2}(\mu^2-5\mu+2)x_1,
\end{equation}
where
$$\omega=\frac{\mu^3-9\mu^2+19\mu-4}{2(\mu^3-8\mu^2+13\mu-2)}.$$
Taking into account that $\x\perp\1$, we see that
$$\delta=\x'\1^\top=4z_1+2z_2+2z_3-3x_1-2x_2-2x_3-x_4.$$
Also, as $\|\x\|=1$,
we obtain that
$$\|\x'\|^2=1+4z_1^2+2z_2^2+2z_3^2-3x_1^2-2x_2^2-2x_3^2-x_4^2.$$
 Further, we have
$$\x' L(\G')\x'^\top\!\!-\!\mu = 4(z_1-z_2)^2\!+\!4(z_2-z_3)^2\!+\!2(z_3-x_4)^2\!-\!6(x_1-x_2)^2\!-\!2(x_2-x_3)^2\!-\!4(x_3-x_4)^2.$$
 As $\|\x'\|^2-\frac{\delta^2}{n}>0$, we have  $\frac{\x'L(\G')\x'^\top}{\|\x'\|^2-\frac{\delta^2}{n}}<\mu$ if and only  if $\x'L(\G')\x'^\top-\mu\|\x'\|^2+\mu\frac{\delta^2}{n}<0$. We have
\begin{align*}
\x'L(\G')\x'^\top-\mu\|\x'\|^2+\mu\frac{\delta^2}{n}&=
12x_1x_2+4(x_2x_3+2x_3x_4)-4(z_3x_4+2z_2z_1+2z_2z_3) \\
 &\quad + \left(\mu-2\right)\left(3x_1^2+x_4^2 \right) + \left( 2\mu-8 \right)\left(x_2^2-z_2^2 \right) \\
 &\quad + \left( 2\mu-6 \right) 
 \left(x_3^2-z_3^2 \right)-\left( 4\mu-4 \right) 
z_1^2+\mu\,\frac {\delta^2}{n}.
\end{align*}
By  substituting the values of $x_2,x_3,x_4,z_1,z_2,z_3,\delta$ in terms of $x_1$ and $\mu$, we can write each term in the above as follows:
\begin{align*}
12x_1x_2&= 6(2-\mu) x_1^2,\\
4(x_2x_3+2x_3x_4)&=\left(3-\mu\right)\left( {\mu}^{2}-6\mu+2\right)^2 x_1^2, \\
-4(z_3x_4+2z_2z_1+2z_2z_3)&= \left( {\mu}^{5}-13{\mu}^{4}+59{\mu}^{3}-107{\mu}^{2}+72\mu-20 \right) \omega^2x_1^2,\\
3x_1^2+x_4^2&= \left(\mu^6-18\mu^5+119\mu^4-350\mu^3+433\mu^2-152\mu+64\right)\frac{x_1^2}{16},\\
x_2^2-z_2^2&= \mu\left(2\mu^5-30\mu^4+157\mu^3-336\mu^2+263\mu-40\right)\frac{x_1^2}{p(\mu)},\\
x_3^2-z_3^2&=(\mu\!-\!6)\mu(\mu^3\!-\!8\mu^2\!+\!12\mu\!-\!3)(3\mu^5\!-\!42\mu^4\!+\!192\mu^3\!-\!309\mu^2\!+\!134\mu\!-\!16)
\frac{x_1^2}{4p(\mu)},\\
z_1^2&= \omega^2 x_1^2,\\
\delta^2&=\mu^2 \left( \mu-5 \right)^2 \left( 
\mu-6 \right)^2
\frac{(\mu^3-8\mu^2+14\mu-5)^2}{16(\mu^3-8\mu^2+13\mu-2)^2} x_1^2,
\end{align*}
 in which
$$p(t)= 4(t-2)^2(t^2-6t+1)^2.$$
Plugging in all these terms and simplifying, it follows that $\frac{\x'L(\G')\x'^\top}{\|\x'\|^2-\frac{\delta^2}{n}}<\mu$ if and only  if \footnote{To help the reader with computations, we provide a Maple code available at \url{https://wp.kntu.ac.ir/ghorbani/ComputFiles/MapleCode.txt} that can be used to derive the rational function appeared in \eqref{eq:fracR} as well as the numeric values of the roots of $p_4(t)$. The code contains	
	similar calculations appearing in the proofs of Lemmas \ref{lem:noM2M3M4} and \ref{lem:noM5}.}
\begin{equation}\label{eq:fracR}
\frac{\left(p_1(\mu)n+p_2(\mu)\right)p_3(\mu)x_1^2}{4np(\mu)}<0,
\end{equation}
 where  
 \begin{align*}
p_1(t)&=t^6-17t^5+104t^4-275t^3+297t^2-90t+8,\\
p_2(t)&=t^6-19t^5+132t^4 -399t^3+475t^2-150t, \\
p_3(t)&=(t-6)(t-5)t^2(t^3-8t^2+14t-5).
  \end{align*}
The smallest root of $t^3-8t^2+14t-5$ is about $0.481$. As $\mu<0.129$, we have that $p_3(\mu)<0$.
The smallest root of $p_1(t)$ is about $0.171$ and so $p_1(\mu)>0$. Since $n\ge18$ we also have
$$p_1(\mu)n+p_2(\mu)\ge18p_1(\mu)+p_2(\mu)=p_4(\mu),$$
where
$$p_4(t)=19t^6-325t^5+2004t^4-5349t^3+5821t^2-1770t+144.$$
The smallest root of $p_4(t)$ is about $0.132$ which means that $p_4(\mu)>0$. Therefore,   $(p_1(\mu)n+p_2(\mu))p_3(\mu)<0$, and so \eqref{eq:fracR} is established. Hence,  by  Lemma~\ref{remark:delta},  $\mu(\G')<\mu$ which is a contradiction.
\end{proof}

\begin{lemma}\label{lem:E2}
The graph  $\G$ does not contain the graph $H$ of Figure~\ref{fig:longD4-2} as a subgraph.
\end{lemma}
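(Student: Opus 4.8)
The plan is to mimic the proof of Lemma~\ref{lem:E1} almost verbatim, the only substantive differences being that the subgraph $H$ of Figure~\ref{fig:longD4-2} (which, as noted above, is forced whenever $\G$ contains $D'_0+M''_1$ or $D'_0+\tilde M'_2$) is one ``block'' longer, so the rational functions of $\mu$ that appear have higher degree, and a larger order threshold may be needed. I would first dispose of the small orders: by Theorem~\ref{thm:quarticOLD}, for $n$ below some bound $n_0$ the quartic graphs in that theorem's family containing $H$ as a subgraph can be listed explicitly, and for each of them I would exhibit a connected quartic graph of the same order with strictly smaller algebraic connectivity — typically obtained by sliding the terminal brick $\tilde M'_0,\tilde M'_1$ or $\tilde M'_2$ toward the middle, or by replacing part of the long end block by copies of $M_0$ — and confirm the inequality by a direct computer computation. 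The threshold $n_0$ is chosen so that for $n\ge n_0$ the corresponding entry of Table~\ref{tab:muUpBound} (via Lemma~\ref{lem:muUpBound}) is small enough for the estimates below.

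For $n\ge n_0$, assume for contradiction that $\G$ contains $H$. Since $\G$ has a decreasing unit Fiedler vector $\x$ that is constant on the cells of the equitable partition (Remark~\ref{rem:sign}), I would apply the eigen-equation \eqref{eq:eigenvector} successively to the cells of $H$, exactly as in the derivation of \eqref{eq:xi}, expressing the values $x_2,x_3,\ldots$ of $\x$ on these cells as polynomials in $\mu$ times $x_1$; using the bound $\mu<$ (the relevant Table entry) one checks that the value of $\x$ on the last cell of $H$ keeps the sign of $x_1$. Then I would replace $H$ by a strictly shorter end block $D'$ — one of $D_2,D_3,D_4$, chosen so that $D'$ has exactly one degree-$2$ vertex attaching to the rest of the graph and so that the resulting graph $\G'$ is again quartic of order $n$ — and define a vector $\x'$ agreeing with $\x$ off the replaced block and taking values $z_1,z_2,\ldots$ on $D'$, where the $z_i$ are chosen to satisfy the system of ``pseudo eigen-equations'' for the number $\mu$ at every vertex of $D'$ except the attaching one (the analogue of the linear system displayed in the proof of Lemma~\ref{lem:E1}). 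Solving this linear system gives $z_i=r_i(\mu)\,x_1$ for explicit rational functions $r_i$.

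With these in hand, and using $\x\perp\1$ and $\|\x\|=1$, I would compute $\delta=\x'\1^\top$, $\|\x'\|^2$ and $\x'L(\G')\x'^\top-\mu$ as rational functions of $\mu$ times $x_1^2$, exactly as in Lemma~\ref{lem:E1}. Since $\|\x'\|^2-\delta^2/n>0$, Lemma~\ref{remark:delta} yields $\mu(\G')<\mu$, contradicting the minimality of $\G$, as soon as
$$\x'L(\G')\x'^\top-\mu\|\x'\|^2+\mu\,\frac{\delta^2}{n}<0.$$
After substitution and simplification, the left-hand side can be written as $\frac{(q_1(\mu)\,n+q_2(\mu))\,q_3(\mu)\,x_1^2}{c\,n\,q(\mu)}$ for explicit polynomials $q,q_1,q_2,q_3$ and a positive constant $c$; the required sign then follows by locating the smallest positive roots of $q_3$, of $q_1$, and of $q_1(t)\,n_0+q_2(t)$ (numerically, with the aid of the Maple code already referenced in the proof of Lemma~\ref{lem:E1}), combined with $\mu<$ (Table value) and $n\ge n_0$.

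The main obstacle is, as in Lemma~\ref{lem:E1}, purely computational, but heavier: because $H$ is longer, the polynomials that arise have degree roughly seven or eight, so one must (i) verify that a legitimate replacement block $D'$ actually exists — i.e.\ that some $D_i$ with $i\in\{2,3,4\}$ has the correct vertex count and exactly the right number of edges to the attaching vertex to make $\G'$ quartic of order $n$ — and (ii) carry out a certified localization of the roots of these polynomials so that the sign of $(q_1(\mu)n+q_2(\mu))q_3(\mu)$ is pinned down on the entire range of admissible $\mu$. If no single replacement works for all admissible $\mu$, one would split the interval for $\mu$ and use two different replacement blocks on the two pieces, exactly the device already used in the excerpt to handle the degenerate sub-cases. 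No idea beyond Lemmas~\ref{remark:delta} and~\ref{lem:E1} is needed.
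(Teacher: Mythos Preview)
Your proposal rests on a misreading of Figure~\ref{fig:longD4-2}: the subgraph $H$ there has the \emph{same} nine vertices as the $H$ of Lemma~\ref{lem:E1}; the only difference is that the edge between the two rightmost vertices (those carrying $x_4$) is absent, and each of these two vertices now has \emph{two} edges leaving the subgraph to the right rather than one. This second point breaks your plan at exactly the step you yourself flagged as needing verification: none of $D_2,D_3,D_4$ can serve as a replacement, because each $D_i$ attaches to the rest of the graph through a single cut vertex carrying two outgoing edges, whereas $H$ here has four outgoing edges distributed over two boundary vertices. A single cut vertex absorbing all four would have degree at least five, so no quartic $\G'$ of order $n$ arises this way, and your polynomial machinery never gets off the ground.

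The paper's argument is both different and far shorter; it follows the template of Lemma~\ref{lem:E3}, not Lemma~\ref{lem:E1}. One replaces $H$ by a custom nine-vertex graph $H'$ (essentially $D_1$ with two extra vertices appended) whose two rightmost vertices again each have two outgoing edges, and one \emph{reuses} the very same Fiedler values $x_1,x_2,x_3,x_4$ on $H'$, merely shifting their multiplicities from $(3,2,2,2)$ to $(4,2,1,2)$. There are no pseudo-eigen\-equations, no new unknowns $z_i$, no high-degree polynomials and no root localisation. From the eigen-equation one gets $x_3=(\mu^2-6\mu+2)x_1/2>0$ (using only $\mu<0.268$, the $n\ge13$ entry of Table~\ref{tab:muUpBound}); then $\delta=x_1-x_3$, $\|\x'\|^2=1+x_1^2-x_3^2$, and $\x'L(\G')\x'^\top=\mu-2(x_1-x_2)^2-2(x_3-x_4)^2<\mu$, and the contradiction follows via Lemma~\ref{remark:delta} exactly as in Lemma~\ref{lem:E3}.
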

\begin{figure}[H]
\centering
\begin{tikzpicture}[scale=0.9]
	\vertex[fill] (r1) at (0,0) [label=left:\footnotesize{$x_1$}] {};
	\vertex[fill] (r3) at (.5,.5) [label=above:\footnotesize{$x_1$}] {};
	\vertex[fill] (r2) at (.5,-.5) [label=below:\footnotesize{$x_1$}] {};
	\vertex[fill] (r5) at (1.5,.5) [label=above:\footnotesize{$x_2$}] {};
	\vertex[fill] (r4) at (1.5,-.5) [label=below:\footnotesize{$x_2$}] {};
\vertex[fill] (r6) at (2.3,-.5) [label=below:\footnotesize{$x_3$}] {};
\vertex[fill] (r7) at (2.3,.5) [label=above:\footnotesize{$x_3$}] {};
\vertex[fill] (r8) at (3.3,-.5) [label=below:\footnotesize{$x_4$}] {};
\vertex[fill] (r9) at (3.3,.5) [label=above:\footnotesize{$x_4$}] {};
\tikzstyle{vertex}=[circle, draw, inner sep=0pt, minimum size=0pt]
    \vertex (s) at (3.6,.5) [] {};
	\vertex (ss) at (3.6,-.5) [] {};
    \vertex (sss) at (3.6,.3) [] {};
	\vertex (ssss) at (3.6,-.3) [] {};
	\path
		(r1) edge (r2)
		(r1) edge (r3)
	   (r1) edge (r4)
	   (r1) edge (r5)
		(r2) edge (r3)
		(r2) edge (r4)
        (r2) edge (r5)
		(r3) edge (r4)
	    (r3) edge (r5)
	    (r6) edge (r4)
		(r5) edge (r7)
	    (r6) edge (r7)
		(r6) edge (r8)
	    (r6) edge (r9)
		(r8) edge (r7)
	    (r7) edge (r9)
		(r8) edge (ssss)
	    (r9) edge (sss)
	    (r9) edge (s)
		(r8) edge (ss);
\end{tikzpicture}
\caption{$D'_0+M''_1$ or $D'_0+\tilde M'_2$  in a long block}\label{fig:longD4-2}
\end{figure}
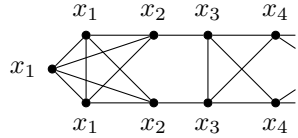
\begin{proof}
For a contradiction suppose that $\G$ contains $H$.
So $n\geq 13$ (otherwise $\G$ cannot contain $H$).
We may assume that $x_1>0$ (since otherwise we consider $-\x$).
By using the eigen-equation \eqref{eq:eigenvector}, we have
$$x_3=(\mu^2-6\mu+2)\frac{x_1}2.$$
As $\mu<0.268$ (by Lemma~\ref{lem:muUpBound}),  we have $\mu^2-6\mu+2>0$ which implies that $x_3>0$.

Now, we replace $H$ by $H'$ to obtain $\G'$
and define a vector $\x'$ on $V(\G')$ such that its components on $H'$ are as  in Figure~\ref{fig:H'inG'}, and on the rest of vertices agree with $\x$.
\begin{figure}[H]
\centering
\begin{tikzpicture}[scale=0.9]
	\vertex[fill] (r1) at (0,.8) [label=above:\footnotesize{$x_1$}] {};
	\vertex[fill] (r2) at (.8,1) [label=above:\footnotesize{$x_1$}] {};
	\vertex[fill] (r3) at (0,0) [label=below:\footnotesize{$x_1$}] {};
	\vertex[fill] (r4) at (.8,-.2) [label=below:\footnotesize{$x_1$}] {};
    \vertex[fill] (r5) at (1.6,.8) [label=above:\footnotesize{$x_2$}] {};
    \vertex[fill] (r6) at (1.6,0) [label=below:\footnotesize{$x_2$}] {};
    \vertex[fill] (r7) at (2.3,.4) [label=above:\footnotesize{$x_3$}] {};
    \vertex[fill] (r8) at (3,.8) [label=above:\footnotesize{$x_4$}] {};
    \vertex[fill] (r9) at (3,0) [label=below:\footnotesize{$x_4$}] {};
\tikzstyle{vertex}=[circle, draw, inner sep=0pt, minimum size=0pt]
    \vertex (s) at (3.3,.8) [] {};
	\vertex (ss) at (3.3,0) [] {};
    \vertex (sss) at (3.3,.6) [] {};
	\vertex (ssss) at (3.3,.2) [] {};
	\path
		(r5) edge (r2)
		(r1) edge (r2)
	    (r1) edge (r5)
		(r1) edge (r3)
        (r1) edge (r4)
		(r2) edge (r3)
	    (r2) edge (r4)
	    (r3) edge (r6)
	    (r4) edge (r3)
	    (r4) edge (r6)
	    (r5) edge (r6)
	    (r7) edge (r6)
	    (r5) edge (r7)
	    (r9) edge (r8)
	    (r7) edge (r8)
	    (r7) edge (r9)
	    (r8) edge (s)
		(r8) edge (sss)
	    (r9) edge (ss)
		(r9) edge (ssss);
\end{tikzpicture}
\caption{The subgraph $H'$ in $\G'$ and the components of $\x'$}
\label{fig:H'inG'}
\end{figure}
By the fact that $\x$ is a unit Fiedler vector of $\G$,  we  see that
$$\delta=\x'\1^\top=x_1-x_3, \quad \|\x'\|^2=1+x_1^2-x_3^2,\quad \x'L(\G')\x'^\top=\mu-2(x_1-x_2)^2-2(x_3-x_4)^2.$$
The situation here is similar to the proof of Lemma~\ref{lem:E3}. Therefore, we can deduce that $\mu(\G')<\mu$ which is a contradiction.
\end{proof}

All in all, we have proved the following theorem:

 \begin{theorem} \label{thm:endblocks} The end blocks of $\G$ belong to $D_0,\ldots,D_4$.
\end{theorem}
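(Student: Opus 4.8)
The plan is to obtain Theorem~\ref{thm:endblocks} by combining the structural result Theorem~\ref{thm:quarticOLD} with the three exclusion Lemmas~\ref{lem:E3}, \ref{lem:E1}, and \ref{lem:E2}. By Theorem~\ref{thm:quarticOLD}, every end block of $\G$ is either one of the short blocks $D_0,D_1,D_2,D_3$ --- in which case the conclusion is already at hand --- or a long end block, assembled from bricks $B_1,\dots,B_s$ ($s\ge2$) with $B_1\in\{D'_0,D'_3\}$, all interior bricks in $\{M''_0,M''_1\}$, and $B_s\in\{\tilde M'_0,\tilde M'_1,\tilde M'_2\}$. So the whole task reduces to showing that the only long end block that can occur is $D_4$. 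I would run the argument for the left end block and then appeal to mirror symmetry for the right one.

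First I would split on the first brick $B_1$. If $B_1=D'_3$, then $\G$ contains a long end block starting with $D'_3$, which is forbidden by Lemma~\ref{lem:E3}; hence $B_1=D'_0$. Next I would inspect the brick attached to $D'_0$, namely $B_2$ (which equals $B_s$ when $s=2$). If this brick is $M''_0$ or $M''_1$ --- which happens whenever $s\ge3$, and also when $s=2$ and $B_s\in\{\tilde M'_0,\tilde M'_1\}$, since $\tilde M'_0\supseteq M''_0$ and $\tilde M'_1\supseteq M''_1$ with the two edges joining to $D'_0$ preserved --- then $\G$ contains $D'_0+M''_0$ or $D'_0+M''_1$ as a subgraph; the only remaining possibility is $s=2$ with $B_s=\tilde M'_2$, which gives $D'_0+\tilde M'_2\subseteq\G$. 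If $\G$ contains $D'_0+M''_1$ or $D'_0+\tilde M'_2$, then in either case it contains the graph $H$ of Figure~\ref{fig:longD4-2} (its two terminal vertices acquire distinct further neighbours because the end block either continues with another brick or is attached to the rest of $\G$ through a cut vertex), contradicting Lemma~\ref{lem:E2}. In the remaining case $\G$ contains $D'_0+M''_0$: either the long end block is exactly $D'_0+\tilde M'_0$ together with its terminal degree-$2$ vertex, in which case this end block is precisely $D_4$ of Figure~\ref{fig:BlockMin} and we are done; or the two degree-$3$ vertices of the $K_4=M''_0$ part are joined to two distinct further vertices, so $\G$ contains the graph $H$ of Figure~\ref{fig:longD4-1}, contradicting Lemma~\ref{lem:E1}.

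The steps above are essentially case-bookkeeping, since the analytic content sits entirely inside Lemmas~\ref{lem:E1}, \ref{lem:E2}, and \ref{lem:E3}. The one point I expect to need genuine care --- the main, if modest, obstacle --- is making the subgraph containments airtight: one must verify, directly from the brick definitions underlying Figure~\ref{fig:MiDi} and the long-block construction, that every long end block starting with $D'_0$ and different from $D_4$ really does contain one of the two graphs $H$ with exactly the attachment pattern drawn (in particular, that its terminal pair of degree-$3$ vertices is joined to two distinct vertices rather than to a single common degree-$2$ vertex), and that $\tilde M'_0$ and $\tilde M'_1$ indeed absorb $M''_0$ and $M''_1$ together with their incident joining edges. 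Once these inclusions are checked, the theorem follows immediately.
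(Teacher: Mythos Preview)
Your proposal is correct and follows essentially the same approach as the paper: the paper also derives Theorem~\ref{thm:endblocks} directly from Theorem~\ref{thm:quarticOLD} together with Lemmas~\ref{lem:E3}, \ref{lem:E1}, and~\ref{lem:E2}, via exactly the case split on $B_1\in\{D'_0,D'_3\}$ and then on whether the brick following $D'_0$ yields $D'_0+M''_0$, $D'_0+M''_1$, or $D'_0+\tilde M'_2$. Your write-up is in fact slightly more explicit than the paper's (spelling out the inclusions $M''_0\subseteq\tilde M'_0$, $M''_1\subseteq\tilde M'_1$ and the attachment pattern of the terminal vertices), but the logic is identical.
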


We remark that Theorem \ref{thm:endblocks},  in particular, implies that $\G$ cannot be a long complete block.

\subsection{Middle blocks}

Our goal is to show that beside $M_0$, $\G$ contains no other types of middle blocks.
The following  lemma provides a key tool for our arguments.

 \begin{lemma}\label{rem:main1}
Let $\rho$ be a unit Fiedler vector of a quartic graph $G$ of order $n$, and $H$ be a subgraph of $G$ such that $G-H$ has two connected components $G_1$ and $G_2$. Let $H'$ be a graph with the same number of vertices and edges as $H$.
  In $G$ we replace $H$ by $H'$ to obtain a new graph $G'$ such that $G'$ is also a quartic graph.
   Suppose that there is a vector $\rho'$ on $V(G')$ such that %$\rho'\perp\bf1$ and
$$\ell:=\sum_{ij\in E(G)\setminus E(H)}(\rho_i-\rho_j)^2=\sum_{ij\in E(G')\setminus E(H')}(\rho'_i-\rho'_j)^2.$$
Set  $$h:=\sum_{ij\in E(H)}(\rho_i-\rho_j)^2,~~h':=\sum_{ij\in E(H')}(\rho'_i-\rho'_j)^2,~~\hbox{and}~~\epsilon:=\|\rho'\|^2-\frac{\delta^2}{n}-1,$$
where  $\delta=\rho'\1^\top$. If $h'-h-\epsilon\mu(G)<0,$ then $\mu(G')<\mu(G).$
\end{lemma}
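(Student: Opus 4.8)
The plan is to deduce the result from a single application of Lemma~\ref{remark:delta} to the pair $(G',\rho')$, after which only elementary algebra remains. First I would record the two Dirichlet-energy identities. Since $\rho$ is a unit Fiedler vector of $G$, we have $\rho\1^\top=0$ and $\|\rho\|^2=1$, so by \eqref{eq} and \eqref{eq2},
$$\mu(G)=\rho L(G)\rho^\top=\sum_{ij\in E(G)}(\rho_i-\rho_j)^2=\ell+h,$$
that is, $\ell=\mu(G)-h$. On the other side, \eqref{eq2} applied to $G'$ gives
$$\rho' L(G')\rho'^\top=\sum_{ij\in E(G')}(\rho'_i-\rho'_j)^2=\ell+h'=\mu(G)-h+h'.$$
Moreover $|V(G')|=n$ (because $H'$ and $H$ have the same number of vertices), and by definition of $\epsilon$ we have $\|\rho'\|^2-\delta^2/n=1+\epsilon$.

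Next I would check the hypothesis of Lemma~\ref{remark:delta}, namely that $\rho'$ is not a multiple of $\1$. If it were, then $h'=\ell=0$, hence $h=\mu(G)$ from $\ell=\mu(G)-h$, while $\|\rho'\|^2=\delta^2/n$ gives $\epsilon=-1$; then $h'-h-\epsilon\mu(G)=-\mu(G)+\mu(G)=0$, contradicting the assumed strict inequality $h'-h-\epsilon\mu(G)<0$. So $\rho'$ is not a multiple of $\1$, and by the Cauchy--Schwarz inequality $1+\epsilon=\|\rho'\|^2-\delta^2/n>0$.

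Now Lemma~\ref{remark:delta} gives
$$\mu(G')\le\frac{\rho' L(G')\rho'^\top}{\|\rho'\|^2-\delta^2/n}=\frac{\mu(G)-h+h'}{1+\epsilon}.$$
Since $1+\epsilon>0$, the right-hand side is strictly smaller than $\mu(G)$ precisely when $\mu(G)-h+h'<\mu(G)(1+\epsilon)$, i.e. $h'-h<\epsilon\mu(G)$, i.e. $h'-h-\epsilon\mu(G)<0$; this is exactly the hypothesis. Hence $\mu(G')<\mu(G)$.

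The argument has no substantive obstacle. The only point that needs care is the degenerate case $\rho'\parallel\1$, which must be excluded before invoking Lemma~\ref{remark:delta} and before dividing by $1+\epsilon$, and this is exactly where the strictness of the hypothesis is used. Note that the splitting of the Dirichlet sums uses only $E(H)\subseteq E(G)$, $E(H')\subseteq E(G')$ and the assumed equality of the complementary energies, so no relation between $\rho$ and $\rho'$ outside $H$ is needed here; the decomposition of $G-H$ into $G_1,G_2$ is not used in this lemma itself but will matter when it is applied.
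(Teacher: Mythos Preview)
Your proof is correct and follows essentially the same route as the paper: split the Dirichlet energy as $\mu(G)=\ell+h$ and $\rho'L(G')\rho'^\top=\ell+h'$, apply Lemma~\ref{remark:delta} to $(G',\rho')$ to get $\mu(G')\le(\ell+h')/(1+\epsilon)$, and finish with the obvious algebra. The paper's proof is just these three lines; your version is more careful in explicitly ruling out the degenerate case $\rho'\parallel\1$ before invoking Lemma~\ref{remark:delta}, which the paper takes for granted.
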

\begin{proof}
We have  $\mu(G)=\rho L(G)\rho^\top=\ell+h$, and by Lemma~\ref{remark:delta},
$$ \mu(G')\le\frac{\rho' L(G')\rho'^\top}{\|\rho'\|^2-\frac{\delta^2}{n}}=\frac{\ell+h'}{1+\epsilon}.$$
It follows that if $h'-h-\epsilon\mu(G)<0,$ then $\mu(G')<\mu(G).$
\end{proof}

\begin{lemma}\label{lem:noM2M3M4}
The graph $\G$ does not contain the  graphs of Figure~\ref{fig:H1H2H3} as induced subgraphs with the given conditions on the components of a Fielder vector:
\begin{itemize}
	\item[\rm(i)]   $H_1$ such that $x_{r+3}\ge0$,
	\item[\rm(ii)]   $H_2$ such that $x_{r+4}\ge0$,
	\item[\rm(iii)]   $H_3$ such that $x_{r+3}\ge0$.	
\end{itemize}
\end{lemma}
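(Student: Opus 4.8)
The plan is to follow the same replacement-and-estimate strategy used in Lemmas~\ref{lem:E1}--\ref{lem:E2}, now applied to forbidden \emph{middle} blocks, using Lemma~\ref{rem:main1} as the bookkeeping device. For each of $H_1,H_2,H_3$ I would first fix a putative decreasing, constant-on-cells unit Fiedler vector $\x$ of $\G$ (which exists by Remark~\ref{rem:sign}), and use the eigen-equation \eqref{eq:eigenvector} on the vertices of $H_i$ to express the ``internal'' entries of $\x$ on $H_i$ as explicit rational functions of $\mu$ and the two boundary values, say $x_r$ and the value on the right boundary vertex. The sign hypotheses ($x_{r+3}\ge 0$ in (i) and (iii), $x_{r+4}\ge 0$ in (ii)) together with the upper bounds on $\mu$ from Lemma~\ref{lem:muUpBound} should force these internal entries to have controlled signs; this is the analogue of the steps ``$\mu^2-5\mu+2>0$ hence $x_3>0$'' in Lemma~\ref{lem:E3} and Lemma~\ref{lem:E1}.

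Next, for each $H_i$ I would choose the replacement block $H_i'$ to be a string of $M_0$-type middle blocks (the minimum-spectral-gap middle block), with the same number of vertices and edges as $H_i$, and define $\x'$ by keeping $\x$ outside $H_i$ and interpolating linearly (or via the $M_0$ eigen-type relations, as in the $s=\tfrac15(3z_k+2z_{k+1})$, $t=\tfrac15(2z_k+3z_{k+1})$ optimization in the proof of Theorem~\ref{thm:muGn}) inside $H_i'$ between the two boundary values. Because the boundary values are unchanged, $\ell:=\sum_{ij\notin E(H_i)}(\x_i-\x_j)^2$ is preserved, so Lemma~\ref{rem:main1} applies. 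I would then compute $h=\sum_{ij\in E(H_i)}(x_i-x_j)^2$, $h'=\sum_{ij\in E(H_i')}(x'_i-x'_j)^2$, $\delta=\x'\1^\top$ and $\epsilon=\|\x'\|^2-\delta^2/n-1$; by construction $h'$ should be strictly smaller than $h$ (an $M_0$ chain distributes a fixed boundary gap more ``efficiently'' than the denser blocks $H_i$), while $\epsilon$ is a small quantity controlled by the few entries that changed. The goal is to show $h'-h-\epsilon\mu<0$, which by Lemma~\ref{rem:main1} gives $\mu(\G')<\mu(\G)$, contradicting minimality of $\G$.

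As in Lemma~\ref{lem:E1}, the inequality $h'-h-\epsilon\mu<0$, after substituting all entries as rational functions of $\mu$ and clearing the (positive) denominators, reduces to a statement of the form $\big(p_1(\mu)n+p_2(\mu)\big)q(\mu)<0$ for explicit low-degree integer polynomials $p_1,p_2,q$. The sign of $q(\mu)$ is pinned down by locating its smallest real root and comparing with the relevant bound on $\mu$ from Table~\ref{tab:muUpBound}; the factor $p_1(\mu)n+p_2(\mu)$ is handled by noting $p_1(\mu)>0$ on the admissible range and then bounding $p_1(\mu)n+p_2(\mu)\ge n_0\,p_1(\mu)+p_2(\mu)=:p_*(\mu)$ for the smallest admissible $n_0$, and checking $p_*$ has no root below the $\mu$-bound. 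Small values of $n$ (below whichever threshold makes the polynomial argument work, paralleling the ``$11\le n\le17$'' case in Lemma~\ref{lem:E1}) would be dispatched by the structural classification in Theorem~\ref{thm:quarticOLD}: only finitely many graphs of that order contain $H_i$, and for each I exhibit an explicit competitor with strictly smaller algebraic connectivity.

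The main obstacle I anticipate is twofold. First, combinatorial: one must choose $H_i'$ (the number of $M_0$ blocks and the placement of leftover vertices/edges) so that the vertex and edge counts match \emph{and} $\x'$ is well-defined and consistent at the cut vertices shared with $G_1,G_2$ — this is the role played by the condition ``$v\in C_p\cap C'_p$'' in Lemma~\ref{lem:muGnEndBlock}, and here it must be arranged by hand for each middle block. Second, computational: the polynomials $p_1,p_2,q$ arising for $H_2$ and $H_3$ will be of higher degree than in Lemma~\ref{lem:E1}, so the root-isolation step (certifying the smallest real root of each polynomial lies above or below the appropriate constant) is where the real work lies; I would do this with the aid of the Maple code referenced in the footnote to Lemma~\ref{lem:E1}, and I would need the sign hypotheses on $x_{r+3}$ (resp.\ $x_{r+4}$) precisely to knock out the otherwise-uncontrolled cross terms in $h'-h$.
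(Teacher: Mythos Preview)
Your overall strategy---replace the offending subgraph and invoke Lemma~\ref{rem:main1}---is correct, but you are missing the structural idea that makes the computation clean. The paper does \emph{not} replace $H_i$ by a string of $M_0$ blocks; instead, each $H'_i$ is simply the \emph{mirror image} of $H_i$ (so vertex and edge counts match automatically), and the new internal values are obtained by evaluating the \emph{same} rational functions $f,g,\ldots$ coming from the eigen-equation at the \emph{swapped} boundary data. For example, in~(i) one first solves $x_{r+1}=f(x_r,x_{r+3})$, $x_{r+2}=g(x_r,x_{r+3})$, and then sets $z_{r+1}=g(x_{r+3},x_r)$, $z_{r+2}=f(x_{r+3},x_r)$ on the reflected block. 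This built-in symmetry forces massive cancellation: $h'-h-\epsilon\mu$ collapses to
\[
\frac{2\mu(x_r-x_{r+3})}{(\mu-2)^2 n}\Bigl((\mu-2)n(x_r+x_{r+3})+2(x_r-x_{r+3})\Bigr),
\]
whose sign is immediate from $x_r>x_{r+3}\ge0$, $n\ge11$, $\mu<0.355$, with \emph{no} root isolation and no small-$n$ case analysis. Cases~(ii) and~(iii) factor the same way; only~(iii) needs the extra inequality $3(x_r+x_{r+4})>x_r-x_{r+4}$, extracted from the eigen-equation when $x_{r+4}<0$.

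Your proposed route---$M_0$ chains with linear or $M_0$-eigen interpolation---loses this cancellation, and your claim ``$h'$ should be strictly smaller than $h$'' is neither proved nor what is actually needed (the paper never asserts $h'<h$; it is the combination $h'-h-\epsilon\mu$ that is shown negative, and the $\epsilon\mu$ term genuinely participates). The polynomial and root-isolation machinery you anticipate from Lemma~\ref{lem:E1} is not required here; it resurfaces only in Lemma~\ref{lem:noM5}, precisely where no such mirror symmetry is available.
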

\begin{figure}[H]
\captionsetup[subfigure]{labelformat=empty}
\centering
\subfloat[$H_1$]{\begin{tikzpicture}[scale=0.9]
	\vertex[fill] (r1) at (.3,.5) [label=above:\footnotesize{$x_r$}] {};
	\vertex[fill] (r2) at (.3,-.5) [] {};
    \vertex[fill] (r3) at (.9,0) [] {};
    \vertex[fill] (r4) at (1.5,.5) [label=above:\footnotesize{$x_{r+2}$}] {};
    \vertex[fill] (r5) at (1.5,-.5) [] {};
    \vertex[fill] (r6) at (2.5,.5) [label=above:\footnotesize{$x_{r+3}$}] {};
    \vertex[fill] (r7) at (2.5,-.5) [] {};
    \tikzstyle{vertex}=[circle, draw, inner sep=0pt, minimum size=0pt]
     \vertex[] () at (.92,-.15) [label=below:\footnotesize{$x_{r+1}$}] {};
	\path
		(r1) edge (r2)
		(r1) edge (r3)
		(r2) edge (r3)
	    (r3) edge (r4)
	    (r3) edge (r5)
	    (r4) edge (r5)
	    (r4) edge (r6)
	    (r4) edge (r7)
	    (r5) edge (r7)
	    (r5) edge (r6)  ;
\end{tikzpicture}}
\qquad
\subfloat[$H_2$]{\begin{tikzpicture}[scale=0.9]
	\vertex[fill] (r1) at (.3,.5) [label=above:\footnotesize{$x_r$}] {};
	\vertex[fill] (r2) at (.3,-.5) [] {};
    \vertex[fill] (r3) at (.9,0) [] {};
    \vertex[fill] (r4) at (1.5,.5) [label=above:\footnotesize{$x_{r+2}$}] {};
    \vertex[fill] (r5) at (1.5,-.5) [] {};
    \vertex[fill] (r6) at (2,0) [label=right:\footnotesize{$x_{r+3}$}] {};
    \vertex[fill] (r7) at (2.5,.5) [label=above:\footnotesize{$x_{r+4}$}] {};
    \vertex[fill] (r8) at (2.5,-.5) [] {};
      \tikzstyle{vertex}=[circle, draw, inner sep=0pt, minimum size=0pt]
    \vertex[] () at (.92,-.15) [label=below:\footnotesize{$x_{r+1}$}] {};
	\path
		(r1) edge (r2)
		(r1) edge (r3)
		(r2) edge (r3)
	    (r3) edge (r4)
	    (r3) edge (r5)
	    (r4) edge (r5)
	    (r4) edge (r6)
	    (r4) edge (r7)
	    (r5) edge (r6)
	    (r5) edge (r8)
	    (r6) edge (r7)
	    (r6) edge (r8);
\end{tikzpicture}}
\qquad
\subfloat[$H_3$]{\begin{tikzpicture}[scale=0.9]
	\vertex[fill] (r1) at (.3,.5) [label=above:\footnotesize{$x_r$}] {};
	\vertex[fill] (r2) at (.3,-.5) [] {};
    \vertex[fill] (r3) at (.9,0) [] {};
    \vertex[fill] (r4) at (1.5,.5) [label=above:\footnotesize{$x_{r+2}$}] {};
    \vertex[fill] (r5) at (1.5,-.5) [] {};
    \vertex[fill] (r6) at (2.5,.5) [label=above:\footnotesize{$x_{r+3}$}] {};
    \vertex[fill] (r7) at (2.5,-.5) [] {};
     \vertex[fill] (r8) at (3.5,.5) [label=above:\footnotesize{$x_{r+4}$}] {};
     \vertex[fill] (r9) at (3.5,-.5) [] {};
     \tikzstyle{vertex}=[circle, draw, inner sep=0pt, minimum size=0pt]
     \vertex[] () at (.92,-.15) [label=below:\footnotesize{$x_{r+1}$}] {};
	\path
		(r1) edge (r2)
		(r1) edge (r3)
		(r2) edge (r3)
	    (r3) edge (r4)
	    (r3) edge (r5)
	    (r4) edge (r5)
	    (r4) edge (r6)
	    (r4) edge (r7)
	    (r5) edge (r7)
	    (r5) edge (r6)
	    (r7) edge (r6)
        (r8) edge (r6)
	    (r7) edge (r9)
	     (r8) edge (r9)	 ;
\end{tikzpicture}}
\caption{Some forbidden subgraphs for $\G$}\label{fig:H1H2H3}
\end{figure}
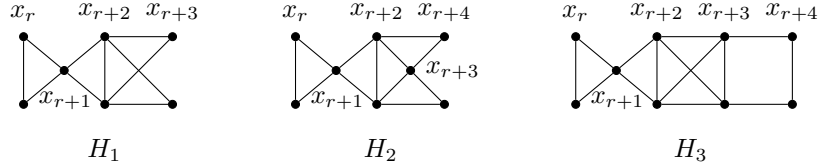
\begin{proof} (i)	By contradiction, let $H_1$ be a subgraph of $\G$. % Recall that $x_r>x_{r+1}>x_{r+2}>x_{r+3}\ge0$. 	
	By  the eigen-equation \eqref{eq:eigenvector} considered on $H_1$, it can be seen that $x_{r+1}=f(x_r,x_{r+3})$ and $x_{r+2}=g(x_r,x_{r+3})$,
	where
	\begin{align*}
	f(x,y)&=\frac {2(3x-\mu x+2y)}{\mu^2-7\mu+10},\\
	g(x,y)&=\frac {2(x-\mu y+4y)}{\mu^2-7\mu+10}.	
	\end{align*}
	We	replace $H_1$ by $H'_1$ to obtain $\G'$ and
	define a vector $\x'$ on $V(\G')$ such that its components on $H'_1$ are as given in Figure~\ref{fig:H'1inG'} (with $z_{r+1},z_{r+2}$ to be specified later) and on the rest of vertices of $\G'$ agree with $\x$.	
\begin{figure}[H]
\centering
\begin{tikzpicture}[scale=0.9]
			\vertex[fill] (r1) at (1.5,.5)[label=above:\footnotesize{$x_r$}] {};
			\vertex[fill] (r2) at (1.5,-.5) [] {};
			\vertex[fill] (r3) at (2.5,.5) [label=above:\footnotesize{$z_{r+1}$}] {};
			\vertex[fill] (r4) at (2.5,-.5)[] {};
			\vertex[fill] (r5) at (3.1,0)[] {};
			\vertex[fill] (r6) at (3.7,.5)[label=above:\footnotesize{$x_{r+3}$}] {};
			\vertex[fill] (r7) at (3.7,-.5) [] {};
	     \tikzstyle{vertex}=[circle, draw, inner sep=0pt, minimum size=0pt]
         \vertex[] () at (3.12,-.15) [label=below:\footnotesize{$z_{r+2}$}] {};
			\path
			(r1) edge (r3)
			(r1) edge (r4)
			(r2) edge (r3)
			(r2) edge (r4)
			(r3) edge (r4)
			(r3) edge (r5)
			(r4) edge (r5)
			(r5) edge (r6)
			(r5) edge (r7)
			(r6) edge (r7);
			\end{tikzpicture}
\caption{The subgraph $H'_1$ in $\G'$ and the components of $\x'$}
\label{fig:H'1inG'}
\end{figure}
We set
$$z_{r+1}=g(x_{r+3},x_r), \quad z_{r+2}=f(x_{r+3},x_r).$$
We  have $\delta=\x'\1^\top=2z_{r+1}+z_{r+2}-x_{r+1}-2x_{r+2}$. By substituting the values of $x_{r+1},x_{r+2},z_{r+1},z_{r+2}$ in terms of $x_r,x_{r+3}, \mu$ we obtain that
$$\delta=\frac {2(x_r-x_{r+3})}{2-\mu}.$$
	Now, in the  notation of  Lemma~\ref{rem:main1}, we have:
	\begin{align*}
	h&=2(x_r-x_{r+1})^2+2(x_{r+1}-x_{r+2})^2+4(x_{r+2}-x_{r+3})^2,\\
	h'&=4(x_r-z_{r+1})^2+2(z_{r+1}-z_{r+2})^2+2(z_{r+2}-x_{r+3})^2.
	\end{align*}
	Moreover, since $\|\x\|=1$,
$$\epsilon=\|\x'\|^2-1-\frac{\delta^2}n=2z_{r+1}^2+z_{r+2}^2-x_{r+1}^2-2x_{r+2}^2-\frac{\delta^2}n.$$
	It follows that
\begin{align*}
h'-h-\epsilon\mu&= 2\left( x_r+2x_{r+1}-4z_{r+1} \right) x_{{r}}+ 4(z_{r+1}^2-x_{r+1}^2)+ 4(x_{r+1}x_{r+2}-z_{r+1}z_{r+2}) \\
&\quad - 6( x_{r+2}^2-z_{r+1}^2)+ 2x_{r+3} \left( 4x_{r+2}-x_{r+3}-2z_{r+2} \right) -\epsilon\mu\\
&=\frac{2\mu+4}{\mu-2}x_r^2+\frac{16(1-\mu)(x_r^2-x_{r+3}^2)}{(\mu-2)^2(\mu-5)}
+\frac{16(x_r^2-x_{r+3}^2)}{(\mu-2)^2(\mu-5)} \\
&\quad +\frac{24(\mu-3)(x_r^2-x_{r+3}^2)}{(\mu-2)^2(\mu-5)}-\frac{2\mu+4}{\mu-2}x_{r+3}^2 
-\frac{4\mu(x_r^2-x_{r+3}^2)}{(\mu-2)^2}+\frac{\mu\delta^2}{n}\\ 
&=\frac { 2\mu(x_r-x_{r+3})}{(\mu-2)^2n} \big( (\mu-2) n(x_r+x_{r+3})+2(x_r-x_{r+3})\big),
\end{align*}
which is negative because $x_r>x_{r+3}\geq 0$, $n\geq 11$, and $\mu<0.355$.      
Therefore, from  Lemma~\ref{rem:main1} it follows  that $\mu(\G')<\mu$, a contradiction.

 (ii)	For a contradiction, assume that $\G$ contains $H_2$.
	By  the eigen-equation \eqref{eq:eigenvector} considered on the vertices of $H_2$, it can be seen that $x_{r+1}=f(x_r,x_{r+4})$, $x_{r+2}=g(x_r,x_{r+4})$, and $x_{r+3}=l(x_r,x_{r+4})$, where
\begin{align*}
f(x,y)&=\frac {2(-\mu^2 x+7\mu x+\mu y-10x-6y)}{ \mu^3-11\mu^2+36\mu-32},\\
g(x,y)&=\frac {-\mu y+2x+6y}{{\mu}^{2}-7\mu+8}, \\
l(x,y)&=\frac {2(-\mu^2 y+8\mu y-2x-14y)}{ \mu^3-11\mu^2+36\mu-32}.
\end{align*}	
We	replace $H_2$ by $H'_2$ to obtain $\G'$ and
	define a vector $\x'$ on $V(\G')$ such that its components on $H'_2$ are as given in Figure~\ref{fig:H'2inG'} and on the rest of vertices agree with $\x$.	
\begin{figure}[H]
\centering
\begin{tikzpicture}[scale=0.9]
			\vertex[fill] (r1) at (1.5,.5)[label=above:\footnotesize{$x_r$}] {};
			\vertex[fill] (r2) at (1.5,-.5) [] {};
			\vertex[fill] (r3) at (2,0) [label=left:\footnotesize{$z_{r+1}$}] {};
			\vertex[fill] (r4) at (2.5,.5) [label=above:\footnotesize{$z_{r+2}$}] {};
			\vertex[fill] (r5) at (2.5,-.5)[] {};
			\vertex[fill] (r6) at (3.1,0) [] {};
			\vertex[fill] (r7) at (3.7,.5)[label=above:\footnotesize{$x_{r+4}$}] {};
			\vertex[fill] (r8) at (3.7,-.5) [] {};
	\tikzstyle{vertex}=[circle, draw, inner sep=0pt, minimum size=0pt]
     \vertex[] () at (3.12,-.15) [label=below:\footnotesize{$z_{r+3}$}] {};
			\path
			(r1) edge (r3)
			(r1) edge (r4)
			(r2) edge (r3)
			(r2) edge (r5)
			(r3) edge (r4)
			(r3) edge (r5)
			(r4) edge (r5)
			(r4) edge (r6)
			(r5) edge (r6)
			(r6) edge (r7)
			(r6) edge (r8)
			(r8) edge (r7);
			\end{tikzpicture}
\caption{The subgraph $H'_2$ in $\G'$ and the components of $\x'$}
\label{fig:H'2inG'}
\end{figure}
We set
$$z_{r+1}=l(x_{r+4},x_r), \quad z_{r+2}=g(x_{r+4},x_r), \quad z_{r+3}=f(x_{r+4},x_r).$$
We have $\delta=\x'\1^\top=z_{r+1}+2z_{r+2}+z_{r+3}-x_{r+1}-2x_{r+2}-x_{r+3}$. By substituting the values of $x_{r+1},x_{r+2},x_{r+3},z_{r+1},z_{r+2},z_{r+3}$ in terms of $x_r,x_{r+4},\mu$, we obtain that
$$\delta=\frac {2(6-\mu)(x_r-x_{r+4})}{\mu^2-7\mu+8}.$$
	Now, in the notations of  Lemma~\ref{rem:main1}, we have:
	\begin{align*}
	h&=2\sum_{i=r}^{r+3}(x_i-x_{i+1})^2+2(x_{r+2}-x_{r+4})^2,  \\
	h'&=2(x_{r}-z_{r+1})^2+2(x_{r}-z_{r+2})^2+2(z_{r+1}-z_{r+2})^2+2(z_{r+2}-z_{r+3})^2+2(z_{r+3}-x_{r+4})^2.
	\end{align*}
	Moreover,
$$\epsilon=\|\x'\|^2-1-\frac{\delta^2}n=z_{r+1}^2+2z_{r+2}^2+z_{r+3}^2-x_{r+1}^2-2x_{r+2}^2-x_{r+3}^2-\frac{\delta^2}n.$$
It follows that
\begin{align*}
h'-h-\epsilon\mu &=2x_{{r}} \left( x_{r}+2x_{r+1}-2z_{r+1}-2z_{r+2}\right)-6(x_{r+2}^2-z_{r+2}^2)\\
&\quad -4(x_{r+1}^{2}+x_{r+3}^{2}-z_{r+1}^{2}-z_{r+3}^{2})+2x_{r+4} \left(2x_{{r+3}}-x_{r+4}-2z_{r+3} \right) \\ 
&\quad +4 x_{r+2}\left( x_{{r+1}}+x_{{r+3}}+x_{{r+4}} \right)-4z_{{r+2}}\left( z_{{r+1}}+z_{{r+3}} \right)-\epsilon\mu \\
&=\frac{2}{\omega}(\mu^2-5\mu-8)x_r^2+\frac{6}{\omega^2}(\mu^2-12\mu+32)(x_r^2-x_{r+4}^2)
-\frac{32}{\omega^2}(\mu-4)(x_r^2-x_{r+4}^2)\\
&\quad -\frac{2}{\omega}\left(\left(\mu^{2}-7\mu+4 \right) x_{r+4}+4x_r\right)x_{r+4} -\frac{4}{\omega^2}
\left(\left(\mu-6\right) x_{{r+4}}-2x_{{r}}\right)^{2}\left(\mu-3\right)\\
&\quad-\frac{8}{\omega^2} \left(\mu x_{{r}}-6 x_{{r}}-2x_{{r+4}}\right)\left(\left( x_{{r}}+x_{{r+4}}\right) \mu-5x_{r}-3x_{r+4}\right) \\
&\quad-\frac{2}{\omega^2}(\mu^2-16\mu+48)\mu (x_r^2-x_{r+4}^2)+\frac{\mu\delta^2}{n}\\
&=\Phi\frac{2\mu(\mu-6)(x_r-x_{r+4})}{\omega^2n},
\end{align*}
where $\omega=\mu^2-7\mu+8$ and $\Phi=(\mu^2-7\mu+8)n(x_r+x_{r+4})+(2\mu-12)(x_r-x_{r+4}).$
It is easy to check that $\Phi>0$ since $x_r>x_{r+4}\geq 0$, $n\geq 11$, and  $\mu<0.355$.
It follows that $h'-h-\epsilon\mu<0$ and so by Lemma~\ref{rem:main1},  $\mu(\G')<\mu$, a contradiction.

 (iii)	 For a contradiction, assume that $\G$ contains $H_3$. It should have at least $21$ vertices to contain $H_3$.
So $n\ge21$, and by Lemma~\ref{lem:muUpBound}, $\mu<0.091$.
	By  the eigen-equation \eqref{eq:eigenvector} considered on the vertices of $H_3$, it can be seen that $x_{r+1}=f(x_r,x_{r+4})$, $x_{r+2}=g(x_r,x_{r+4})$, and $x_{r+3}=l(x_r,x_{r+4})$,
	where
	\begin{align*}
	f(x,y)&=\frac{2(-\mu^2 x+6\mu x-5x-2y)}{\mu^3-10\mu^2+27\mu-14},\\
	g(x,y)&=\frac{2(\mu x+\mu y-3x-4y)}{\mu^3-10\mu^2+27\mu-14},  \\
	l(x,y)&=\frac{-\mu^2y+7\mu y-4x-10y}{\mu^3-10\mu^2+27\mu-14}.
	\end{align*}
We have that $x_r>x_{r+4}$ (by Remark~\ref{rem:sign}). We further claim that
\begin{equation}\label{eq:xr+xr+4}
3(x_r+x_{r+4})>x_r-x_{r+4}>0.
\end{equation}
If $x_{r+4}\ge0$, this trivially holds.
If $x_{r+4}<0$, it holds by the following argument.	Note that $l(x_r,x_{r+4})=x_{r+3}\ge0$ which means $-\mu^2x_{r+4}+7\mu x_{r+4}-4x_r-10x_{r+4}\le0$. Therefore,
	$4x_r\ge(-\mu^2+7\mu-10)x_{r+4}>-8x_{r+4}$, that is $x_r>-2x_{r+4}$, from which \eqref{eq:xr+xr+4} follows.

We	now replace $H_3$ by $H'_3$ to obtain $\G'$ and
	define a vector $\x'$ on $V(\G')$ such that its components on $H'_3$ are as given in Figure~\ref{fig:H'3inG'} and on the rest of vertices agree with $\x$.	
\begin{figure}[H]
\centering
\begin{tikzpicture}[scale=0.9]
			\vertex[fill] (r) at (.5,.5)[label=above:\footnotesize{$x_r$}] {};
			\vertex[fill] (r0) at (.5,-.5) [] {};
			\vertex[fill] (r1) at (1.5,.5)[label=above:\footnotesize{$z_{r+1}$}] {};
			\vertex[fill] (r2) at (1.5,-.5) [] {};
			\vertex[fill] (r3) at (2.5,.5) [label=above:\footnotesize{$z_{r+2}$}] {};
			\vertex[fill] (r4) at (2.5,-.5)[] {};
			\vertex[fill] (r5) at (3.1,0) [] {};
			\vertex[fill] (r6) at (3.7,.5)[label=above:\footnotesize{$x_{r+4}$}] {};
			\vertex[fill] (r7) at (3.7,-.5) [] {};
	\tikzstyle{vertex}=[circle, draw, inner sep=0pt, minimum size=0pt]
     \vertex[] () at (3.1,-.15) [label=below:\footnotesize{$z_{r+3}$}] {};
			\path
			(r) edge (r0)
			(r) edge (r1)
			(r0) edge (r2)
			(r1) edge (r2)
			(r1) edge (r4)	
			(r1) edge (r3)
			(r2) edge (r3)
			(r2) edge (r4)
			(r3) edge (r4)
			(r3) edge (r5)
			(r4) edge (r5)
			(r5) edge (r6)
			(r5) edge (r7)
			(r6) edge (r7);
			\end{tikzpicture}
\caption{The subgraph $H'_3$ in $\G'$ and the components of $\x'$}
\label{fig:H'3inG'}
\end{figure}
We set
$$z_{r+1}=l(x_{r+4},x_r), \quad z_{r+2}=g(x_{r+4},x_r), \quad z_{r+3}=f(x_{r+4},x_r).$$
We  have $\delta=\x'\1^\top=2z_{r+1}+2z_{r+2}+z_{r+3}-x_{r+1}-2x_{r+2}-2x_{r+3}$. By substituting the values of $x_{r+1},x_{r+2},x_{r+3},z_{r+1},z_{r+2},z_{r+3}$ in terms of $x_r,x_{r+4},\mu$ we obtain that
$$\delta=\frac {2(\mu-5)(x_{{k}}-\,x_{{k+4}})}{\mu^3-10\mu^2+27\mu-14}.$$
	Now, in the notations of  Lemma~\ref{rem:main1}, we have:
	\begin{align*}
	h&=2\sum_{i=r}^{r+3}(x_i-x_{i+1})^2+2(x_{r+2}-x_{r+3})^2,  \\
	h'&=2(x_{r}-z_{r+1})^2+4(z_{r+1}-z_{r+2})^2+2(z_{r+2}-z_{r+3})^2+2(z_{r+3}-x_{r+4})^2.
	\end{align*}
	Moreover,
$$\epsilon=\|\x'\|^2-1-\frac{\delta^2}n=2z_{r+1}^2+2z_{r+2}^2+z_{r+3}^2-x_{r+1}^2-2x_{r+2}^2-2x_{r+3}^2-\frac{\delta^2}n.$$
	It follows that
\begin{align*}
h'-h-\epsilon\mu &=4\left( x_{{r+1}}-z_{{r+1}} \right) x_{{r}}
-6(x_{r+2}^{2}+x_{r+3}^{2}-z_{r+1}^{2}-z_{r+2}^{2})\\
&\quad -4z_{{r+3}}\left( x_{{r+4}}+z_{{r+2}}-z_{{r+3}}\right)+8(x_{{r+2}}x_{{r+3}}-z_{{r+1}}z_{{r+2}})\\
&\quad+4(x_{r+1}x_{r+2}+x_{r+3}x_{r+4}-x_{r+1}^2)-\epsilon\mu\\
&=\Phi\frac{-2\mu(\mu-5)(x_r-x_{r+4})}{(\mu^3-10\mu^2+27\mu-14)^2n},
\end{align*}
	in which
	$$\Phi=(\mu^3-10\mu^2+27\mu-14)n(x_r+x_{r+4})+(10-2\mu)(x_r-x_{r+4}).$$
The only real zero of $t^3-10t^2+27t-4$ is about $0.157$.
Since $\mu<0.091$, we have $\mu^3-10\mu^2+27\mu-4<0$.
It follows that
\begin{equation}\label{eq:Phi}
\Phi<-10n(x_r+x_{r+4})+10(x_r-x_{r+4}).
\end{equation}
In view of \eqref{eq:xr+xr+4}, the right hand side of \eqref{eq:Phi} is negative, and thus $h'-h-\epsilon\mu<0$. The result now follows from  Lemma~\ref{rem:main1}.
\end{proof}

\begin{lemma}\label{lem:M5orM'1}
Any middle block of $\G$ is either $M_0$ or $M_3$.
\end{lemma}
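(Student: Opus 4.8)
The plan is to upgrade Theorem~\ref{thm:quarticOLD}, which already tells us that every middle block of $\G$ is one of $M_0,M_1,M_2,\tilde M_2,M_3$ or a long middle block; it therefore remains to exclude $M_1$, $M_2$, $\tilde M_2$ and every long middle block. The mechanism is that each block $B$ to be excluded, read together with a cut vertex and at most two vertices of a block adjacent to $B$, carries an induced copy of one of the forbidden subgraphs $H_1,H_2,H_3$ of Lemma~\ref{lem:noM2M3M4}, so its presence contradicts that lemma. For instance, suppose a middle block equals $M_1$, write $u$ for its left cut vertex, and let $\{r_1,r_2\}$, $\{r_3,r_4\}$ be the two cells of $M_1$ with $u\sim r_1,r_2$, each of $r_1,r_2$ adjacent to each of $r_3,r_4$, and $r_3\not\sim r_4$. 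If the block attached to $u$ on its other side ends with a cell $\{a,b\}$ spanning an edge — which holds for every possible such neighbour except the exceptional end block $D_0$ — then $a,b,u,r_1,r_2,r_3,r_4$ induce exactly $H_1$: the triangle $\{a,b,u\}$, the apex $u$ fanning onto $\{r_1,r_2\}$, which fans onto the non-adjacent pair $\{r_3,r_4\}$, and no stray edges since $a,b$ lie in a different block from $r_1,\dots,r_4$. A similar analysis shows that $M_2$ and $\tilde M_2$ force an induced $H_2$, and that a long middle block — which by construction is a chain of at least two bricks from $M'_0,M'_1,M'_2,M''_0,M''_1$ and their mirrors, joined pairwise by two edges — forces an induced $H_3$.

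To apply Lemma~\ref{lem:noM2M3M4} one must also check its sign hypothesis: the cell at the far end of the copy of $H_i$ (the one carrying $x_{r+3}$, resp.\ $x_{r+4}$) must be non-negative for a Fiedler vector. By Remark~\ref{rem:sign}, $\G$ has a decreasing unit Fiedler vector $\x$, constant on the cells of the equitable partition and changing sign exactly once, so the non-negative cells form a prefix of the path-like structure. If the excluded block $B$ lies inside that prefix, the hypothesis is immediate. If $B$ lies entirely in the negative part, I pass to the mirror image $\tilde\G$ of $\G$: as an abstract graph $\tilde\G\cong\G$, so $\tilde\G$ is again a minimal quartic graph, to which Remark~\ref{rem:sign} supplies its own decreasing Fiedler vector, and its block sequence is that of $\G$ reversed; hence the mirror of $B$ — which, like $B$, is again $M_1$ (note $\tilde M_1\cong M_1$), $M_2$, $\tilde M_2$, or a long middle block — now lies in the non-negative prefix of $\tilde\G$, and running the construction inside $\tilde\G$ yields the contradiction. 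The two remaining situations are that $B$ straddles the (unique) sign change, and that the neighbour of $B$ supplying the triangle is $D_0$; in the first, I use the non-negative part of $B$ together with the block immediately to its left (which is entirely non-negative), and in both situations $\G$ is confined to finitely many small configurations that are settled by Theorem~\ref{thm:quarticOLD} together with the direct computations already carried out for Lemma~\ref{lem:muUpBound}.

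I expect the real work, and the only genuine obstacle, to be the combinatorial bookkeeping: verifying block by block that the selected seven (or nine) vertices induce precisely the asserted $H_i$ — in particular that none of the ``missing'' edges of $H_1,H_2,H_3$ occurs in $\G$ — and that the copy can always be positioned with its far cell non-negative. The awkward points are the straddling case, where the sign change of $\x$ falls inside the excluded block, and the interference of the end block $D_0$, whose cut-vertex neighbourhood need not span a triangle; both are handled by reducing to small orders and invoking the earlier structural and computational results.
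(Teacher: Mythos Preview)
Your overall strategy coincides with the paper's: locate an induced copy of $H_1$, $H_2$ or $H_3$ across the cut vertex of the offending block, invoke Lemma~\ref{lem:noM2M3M4}, and use a mirror argument to meet the sign hypothesis. However, several of the concrete claims are wrong, and one of them is a genuine gap.

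First, the assignment of which $H_i$ appears is inaccurate. With the left triangle $a,b,u$ attached, $\tilde M_2$ produces $H_1$ (its first two interior cells are an adjacent pair followed by a non-adjacent pair), not $H_2$; it is $M_2$ that produces $H_2$. Likewise ``a long middle block forces $H_3$'' is false in general: a long block whose first brick is $M'_1$ yields $H_1$, one starting with $M'_2$ yields $H_2$, and only a long block starting with $M'_0$ yields $H_3$. The paper separates these cases explicitly.

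Second, and more seriously, your mirror step is incomplete. Passing to $\tilde\G$ and ``running the construction inside $\tilde\G$'' is not simply a repetition: in $\tilde\G$ the \emph{other} end of $B$ is now on the left, so the type of induced $H_i$ one finds there is governed by the rightmost brick of $B$ (namely $\tilde M'_0$, $\tilde M'_1$, $\tilde M'_2$, or, for short $B$, by $\tilde B$ itself). The paper carries out exactly this case split; your proposal suppresses it, and for instance would fail for a long block that starts with $M'_1$ but ends with $\tilde M'_0$ when the sign change lies in the first brick. Relatedly, there is no separate ``straddling'' case: once the $x_{r+3}$ (or $x_{r+4}$) cell is negative, the decreasing-and-single-sign-change property forces all later cells of $B$ to be negative, so the mirror argument already covers it. Your attempt to treat it by ``using the non-negative part of $B$ together with the block to its left'' does not make sense, since that is what you were already doing.

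Finally, the claim that the $D_0$ exception reduces to finitely many small configurations is unjustified as stated: $D_0 M_1 M_0\cdots M_0 \tilde D_j$ can have arbitrary order. The correct observation is that if the left neighbour of $B$ is $D_0$, one can still run the mirror argument unless the \emph{right} neighbour is $\tilde D_0$ as well, which forces $B$ to be the unique middle block and pins down $\G$ up to a single small graph; but you need to say this.
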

\begin{proof} Let $B$ be an arbitrary middle block of $\G$.
According to Theorem~\ref{thm:quarticOLD}, $B$ is either one of the short blocks $M_0, M_1, M_2, \tilde M_2, M_3$,
 or it is a long block starting with either $M'_0, M'_1$, or $M'_2$.

First assume that $B$ is either a long block starting with $M'_1$, or it is one of the short blocks $M_1$ or $\tilde M_2$.
Then $\G$ contains $H_1$ (of Lemma~\ref{lem:noM2M3M4}\,(i)) as an induced subgraph.
If all the components of $\x$ on the first five vertices of $B$ are non-negative, then
 the required condition of Lemma~\ref{lem:noM2M3M4}\,(i) is satisfied, and so we are done.
  Otherwise, $\x$ is negative on $B$ except possibly for its first three vertices.
Recall that if $B$ is a long block, then its last brick $B_s$ is either $\tilde M'_0, \tilde M'_1$, or $\tilde M'_2$.
 Now consider the mirror image $\tilde\G$ and $-\x$ as its Fiedler vector.
It turns out that in $\tilde\G$, $-\x$ is non-negative on  $\tilde B$ except possibly for its last three vertices.
 If $B=M_1$ or $B_s=\tilde M'_1$, then $\tilde\G$ contains $H_1$;
  if $B=\tilde M_2$ or $B_s=\tilde M'_2$, then $\tilde\G$ contains $H_2$; and if $B_s=\tilde M'_0$, then $\tilde\G$ contains $H_3$.
  Furthermore, the required conditions on the sign of the components of the Fiedler vector in Lemma~\ref{lem:noM2M3M4} holds in all these three cases. This leads again to a contradiction by Lemma~\ref{lem:noM2M3M4}.

If $B$ is long block starting with a $M'_2$ or $B=M_2$, we obtain a contradiction similarly.

So far we have proved that if $B$ is a long middle block, then it must start and end with $M'_0$ and $\tilde M'_0$, respectively.
Such a block must contain $H_3$. If the components of $\x$ satisfy the condition (iii) of  Lemma~\ref{lem:noM2M3M4}, then we are done. Otherwise, $x_{r+3}<0$ and so $\tilde B$ in $\tilde\G$ fulfills the condition (iii), and again we reach a contradiction.

It follows that $B$ must be one of the short blocks $M_0$ or $M_3$, as desired.
\end{proof}

It remains to show that no middle block in $\G$ can be $M_3$. This will be essentially done by the next lemma.
By  $D_iM_3$ we denote the graph obtained by identifying the degree $2$ vertices of $D_i$ and $M_3$.

\begin{lemma}\label{lem:noM5}
	The graph $\G$ does not contain the following subgraphs:
	\begin{itemize}
	\item[\rm(i)]   $H_4$ (of Figure~\ref{fig:H4}) such that $x_{r+4}\geq 0$ and for the two left-most vertices, all their neighbors on their left lie in a single cell of the equitable partition of $\G$,
		\item[\rm(ii)]  $D_0M_3$,
		\item[\rm(iii)]   $D_2M_3$.
	\end{itemize}
\end{lemma}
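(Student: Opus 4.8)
The plan is to prove all three parts by the replacement technique already used in this section, applying it once for each part. In each case one assumes for contradiction that $\G$ contains the stated subgraph, uses the eigen-equation~\eqref{eq:eigenvector} on its interior vertices to express every internal Fiedler-component as an explicit linear combination --- with coefficients rational in $\mu$ --- of the two boundary components, then replaces the subgraph by one with the same numbers of vertices and edges in which the copy of $M_3$ is shrunk to a copy of $M_0$ (the freed vertex being reattached so that the counts are preserved), and finally tests the Rayleigh quotient on the resulting graph $\G'$ with a vector $\x'$ built from the argument-reversed versions of those formulas on the modified part and agreeing with $\x$ elsewhere. For part~(i) the subgraph $H_4$ sits strictly inside the path-like $\G$, so $\G$ minus it has two components and I would conclude via Lemma~\ref{rem:main1}; for parts~(ii) and~(iii) the subgraphs $D_0M_3$ and $D_2M_3$ hang off one end of $\G$, so, exactly as in Lemmas~\ref{lem:E1}--\ref{lem:E3}, I would instead apply Lemma~\ref{remark:delta} to $\x'$ directly.

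For part~(i): by Remark~\ref{rem:sign} the vector $\x$ is constant on the cells and strictly decreasing, so $x_r>x_{r+4}$, while $x_{r+4}\ge0$ is the hypothesis; the assumption that the outside-neighbors of the two leftmost vertices of $H_4$ lie in a single cell is precisely what will ensure that the edges of $\G$ not inside $H_4$ contribute the same amount to the Dirichlet form after the replacement, so Lemma~\ref{rem:main1} will be applicable. After solving the eigen-equation for the interior components and substituting them and their reversals into $\delta=\x'\1^\top$, I expect $\delta$ to come out equal to a rational function of $\mu$ times $x_r-x_{r+4}$; feeding everything into $h'-h-\epsilon\mu$ and simplifying, the outcome should factor as
\[
\frac{c(\mu)\,\mu\,(x_r-x_{r+4})}{q(\mu)^2\,n}\bigl(q(\mu)\,n\,(x_r+x_{r+4})+p(\mu)\,(x_r-x_{r+4})\bigr)
\]
for suitable polynomials $c,p,q$. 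Using the upper bound on $\mu$ from Lemma~\ref{lem:muUpBound} for the relevant range of $n$, I would check the signs of $c(\mu)$ and $q(\mu)$ and that the parenthesized factor has the sign making the whole expression negative; then $h'-h-\epsilon\mu<0$, and Lemma~\ref{rem:main1} yields $\mu(\G')<\mu$, the desired contradiction.

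For parts~(ii) and~(iii): $D_0M_3$ and $D_2M_3$ have $12$ and $14$ vertices, and I would replace them by $D_1M_0$ and $D_3M_0$ respectively, which have the same numbers of vertices and edges and in which the middle block has become $M_0$ while the end block is one permitted by the conclusion of Theorem~\ref{thm:quarticNEW} (namely $D_1$, resp.\ the exceptional $D_3$). After scaling $\x$ so that its component $x_1$ on the far cut vertex is positive, the eigen-equation makes every interior component a multiple of $x_1$ with coefficient rational in $\mu$; plugging these into $\x'$ and computing $\delta=\x'\1^\top$, $\|\x'\|^2$ and $\x'L(\G')\x'^\top$, the inequality $\mu(\G')<\mu$ should reduce --- just as in Lemma~\ref{lem:E1}, cf.~\eqref{eq:fracR} --- to the negativity of a rational function of $\mu$ and $n$ of the form $\bigl(p_1(\mu)\,n+p_2(\mu)\bigr)\,p_3(\mu)$ divided by a positive quantity, which I would settle by comparing the smallest positive roots of $p_1,p_2,p_3$ with the bound on $\mu$ from Lemma~\ref{lem:muUpBound} for the corresponding range of $n$. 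Since $D_0M_3$ and $D_2M_3$ force $n\ge12$, resp.\ $n\ge14$, only finitely many small values of $n$ fall outside the range in which this asymptotic bound suffices, and those I would handle by a direct computer check of the finitely many candidate graphs of Theorem~\ref{thm:quarticOLD}, as was done for $11\le n\le17$ in Lemma~\ref{lem:E1}.

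The main obstacle, as throughout this section, will be the polynomial sign analysis: after the substitutions one is left with polynomials in $\mu$ of degree up to about six, and the argument hinges on pinning down their smallest positive roots and confirming that $\mu$ --- known only to obey the bounds of Table~\ref{tab:muUpBound} --- lies below them; the Maple code referenced in the earlier footnote can be reused for these simplifications and root estimates. I also need to make sure that each replacement really produces a connected quartic graph of the same order with a path-like structure, so that minimality of $\G$ is genuinely contradicted, and that the sign hypotheses on the Fiedler components --- $x_{r+4}\ge0$ in part~(i), positivity of $x_1$ in parts~(ii) and~(iii), and the monotonicity of $\x$ from Remark~\ref{rem:sign} --- are exactly what is needed to force the crucial factor to have the right sign. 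I expect the heaviest computation to be the $D_2M_3$ case of part~(iii).
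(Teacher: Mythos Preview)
Your plan is correct and matches the paper's proof essentially step for step: part~(i) is handled via Lemma~\ref{rem:main1} with argument-reversed interior formulas, and parts~(ii) and~(iii) replace $D_0M_3$ by $D_1M_0$ and $D_2M_3$ by $D_3M_0$ exactly as you propose, concluding via Lemma~\ref{remark:delta}. Two small corrections: in part~(i) the boundary components are $x_r$ and $x_{r+5}$ (not $x_{r+4}$), and the hypothesis $x_{r+4}\ge0$ is used to secure the inequality $2(x_r+x_{r+5})>x_r-x_{r+5}$ even when $x_{r+5}<0$; and the polynomials in part~(iii) reach degree~$12$ rather than about six, so the computation there is indeed the heaviest, as you anticipated.
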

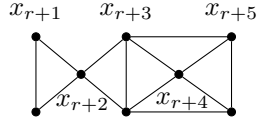
\begin{figure}[H]
\centering
\begin{tikzpicture}[scale=1]
	\vertex[fill] (r3) at (1.3,.5) [label=above:\footnotesize{$x_{r+1}$}] {};
	\vertex[fill] (r4) at (1.3,-.5) [] {};
	\vertex[fill] (r5) at (1.9,0) [] {};
	\vertex[fill] (r6) at (2.5,.5) [label=above:\footnotesize{$x_{r+3}$}] {};
	\vertex[fill] (r7) at (2.5,-.5) [] {};
	\vertex[fill] (r8) at (3.2,0) [] {};
	\vertex[fill] (r9) at (3.9,.5) [label=above:\footnotesize{$x_{r+5}$}] {};
	\vertex[fill] (r10) at (3.9,-.5) [] {};
  \tikzstyle{vertex}=[circle, draw, inner sep=0pt, minimum size=0pt]
    \vertex[] () at (1.92,-.12) [label=below:\footnotesize{$x_{r+2}$}] {};
     \vertex[] () at (3.25,-.09) [label=below:\footnotesize{$x_{r+4}$}] {};
	\path
	(r4) edge (r3)
	(r3) edge (r5)
	(r4) edge (r5)
	(r5) edge (r7)
	(r5) edge (r6)
	(r7) edge (r6)
	(r8) edge (r6)
	(r6) edge (r9)
	(r8) edge (r7)
	(r10) edge (r7)
	(r8) edge (r9)
	(r8) edge (r10)
	(r9) edge (r10) ;
	\end{tikzpicture}
\caption{The subgraph $H_4$ and the components of $\x$}
\label{fig:H4}
\end{figure}
\begin{proof}
 (i) For a contradiction, assume that $\G$ contains $H_4$.
 By the assumption, we can assume that for the two left-most vertices, all their neighbors on their left have the weight $x_r$.
	Applying  the eigen-equation \eqref{eq:eigenvector} to the vertices of $H_4$ in $\G$, we see that
$$x_{r+1}=f(x_r,x_{r+5}),~ x_{r+2}=g(x_r,x_{r+5}),~  x_{r+3}=l(x_r,x_{r+5}),~ x_{r+4}=p(x_r,x_{r+5}),$$ where
	\begin{align*}
	f(x,y)&={\frac {-2({\mu}^{3}x-11\,{\mu}^{2}x+36\,\mu\,x+\mu\,y-32\,x-6\,y)}{{
				\mu}^{4}-14\,{\mu}^{3}+67\,{\mu}^{2}-126\,\mu+76}},\\
	g(x,y)&={\frac {2(2\,{\mu}^{2}x+{\mu}^{2}y-14\,\mu\,x-9\,\mu\,y+20\,x+18\,y)}{
			{\mu}^{4}-14\,{\mu}^{3}+67\,{\mu}^{2}-126\,\mu+76}}, \\
	l(x,y)&=-{\frac {{\mu}^{3}y-13\,{\mu}^{2}y+4\,\mu x+52\,\mu\,y-16\,x-60\,y}{{\mu}^{4}-
			14\,{\mu}^{3}+67\,{\mu}^{2}-126\,\mu+76}},   \\
	p(x,y)&={\frac {-2({\mu}^{3}y-11\,{\mu}^{2}y+36\,\mu\,y-34\,y-4\,x)}{{\mu}^{4}
			-14\,{\mu}^{3}+67\,{\mu}^{2}-126\,\mu+76}}.
	\end{align*}
We have that $x_r>x_{r+5}$ (by Remark~\ref{rem:sign}). We further claim that
\begin{equation}\label{eq:xr+xr+5}
2(x_r+x_{r+5})>x_r-x_{r+5}>0.
\end{equation}
If $x_{r+5}\ge0$, this trivially holds.
If $x_{r+5}<0$, it holds by the following argument.	Note that $p(x_r,x_{r+5})=x_{r+4}\ge0$ which means $\mu^3x_{r+5}-11\mu^2x_{r+5}+36\mu x_{r+5}-34x_{r+5}-4x_r\le0$. From this and the fact that $\mu<0.355$ (by Lemma~\ref{lem:muUpBound}), we have
	$4x_r\ge(\mu^3-11\mu^2+36\mu-34)x_{r+5}>-12x_{r+5}$, that is $x_r>-3x_{r+5}$, and thus \eqref{eq:xr+xr+5} follows.

We	now replace $H_4$ by $H'_4$ to obtain $\G'$ and
	define a vector $\x'$ on $V(\G')$ such that its components on $H'_4$ are as given in Figure~\ref{fig:H'4inG'} and on the rest of vertices agree with $\x$.	
\begin{figure}[H]
\centering
\begin{tikzpicture}[scale=1]			
			\vertex[fill] (r3) at (.8,0) [label=left:\footnotesize{$z_{r+1}$}] {};
			\vertex[fill] (r4) at (1.35,.5) [label=above:\footnotesize{$z_{r+2}$}] {};
			\vertex[fill] (r5) at (1.35,-.5) [] {};
			\vertex[fill] (r6) at (1.9,0) [] {};
			\vertex[fill] (r7) at (2.5,.5) [label=above:\footnotesize{$z_{r+4}$}] {};
			\vertex[fill] (r8) at (2.5,-.5) [] {};
			\vertex[fill] (r9) at (3.5,.5) [label=above:\footnotesize{$x_{r+5}$}] {};
			\vertex[fill] (r10) at (3.5,-.5) [] {};
	\tikzstyle{vertex}=[circle, draw, inner sep=0pt, minimum size=0pt]
    \vertex[] () at (1.93,-.13) [label=below:\footnotesize{$z_{r+3}$}] {};
			\path	
			(r4) edge (r3)
			(r3) edge (r5)
			(r4) edge (r5)
			(r4) edge (r6)
			(r5) edge (r6)
			(r7) edge (r6)
			(r8) edge (r6)
			(r7) edge (r9)
			(r8) edge (r7)
			(r10) edge (r7)
			(r8) edge (r9)
			(r8) edge (r10)
			(r9) edge (r10);
			\end{tikzpicture}
\caption{The subgraph $H'_4$ in $\G'$ and the components of $\x'$}
\label{fig:H'4inG'}
\end{figure}
We set
$$z_{r+1}=p(x_{r+5},x_r), \quad z_{r+2}=l(x_{r+5},x_r), \quad z_{r+3}=g(x_{r+5},x_r), \quad z_{r+4}=f(x_{r+5},x_r).$$
We have $\delta=\x'\1^\top=z_{r+1}+2z_{r+2}+z_{r+3}+2z_{r+4}-2x_{r+1}-x_{r+2}-2x_{r+3}-x_{r+4}$. By substituting the values of $x_{r+1},\ldots,x_{r+4},$ and $z_{r+1},\ldots,z_{r+4}$ in terms of $x_r,x_{r+5},\mu$ we obtain that
$$\delta=\frac{2(\mu-4)(\mu-5)(x_r-x_{r+5})}{\mu^4-14\mu^3+67\mu^2-126\mu+76}.$$
	Now, in the notation of  Lemma~\ref{rem:main1}, we have
	\begin{align*}
	h&=4(x_{r}-x_{r+1})^2+2\sum_{i=r+1}^{r+4}(x_i-x_{i+1})^2+2(x_{r+3}-x_{r+5})^2,  \\
	h'&=2(x_{r}-z_{r+1})^2+2(x_{r}-z_{r+2})^2+2 \sum_{i=r+1}^{r+3}(z_i-z_{i+1})^2+4(z_{r+4}-x_{r+5})^2.
	\end{align*}
	Moreover,
$$\epsilon=\|\x'\|^2-1-\frac{\delta^2}n=z_{r+1}^2+2z_{r+2}^2+z_{r+3}^2+2z_{r+4}^2-2x_{r+1}^2-x_{r+2}^2-2x_{r+3}^2- x_{r+4}^2-\frac{\delta^2}n.$$
	It follows that
\begin{align*}
h'-h-\epsilon\mu&=8(x_{r}x_{r+1}-8z_{r+4}x_{r+5})
+4\left(\left(x_{{r+4}}+x_{{r+5}} \right) x_{{r+3}}-z_{r+2}\left( x_{r}+z_{r+1} \right)\right)\\
&\quad-4(x_{r+2}^{2}+x_{r+4}^{2}-z_{r+1}^{2}-z_{r+3}^{2})
 -6(x_{r+1}^{2}+x_{r+3}^{2}-z_{r+2}^{2}-z_{r+4}^{2})\\
&\quad -4(x_{r}z_{r+1}-x_{r+2}x_{r+1}-x_{r+4}x_{r+5}+z_{r+3}z_{r+4})
 +4(x_{r+2}x_{r+3}-z_{r+2}z_{r+3})-\epsilon\mu\\
&=\Phi\frac{-2( x_r-x_{r+5})(\mu-4)(\mu-5)\mu}{(\mu^4-14\mu^3+67\mu^2-126\mu+76)^2n},
\end{align*}
where $$\Phi=(\mu^4-14\mu^3+67\mu^2-126\mu+76)n (x_r+x_{r+5})+(-2\mu^2+18\mu-40)(x_r-x_{r+5}).$$
The smallest zero of $t^4-14t^3+67t^2-126t+46$ is about $0.472$. Hence $\mu^4-14\mu^3+67\mu^2-126\mu+46>0$ because $\mu<0.355$, and thus
	\begin{equation}\label{eq:Phi2}
	\Phi>30n(x_k+x_{k+5})-40(x_k-x_{k+5}).
	\end{equation}
In view of \eqref{eq:xr+xr+5}, the right hand side of \eqref{eq:Phi2} is positive which means $h'-h-\epsilon\mu<0$.	The result now follows from Lemma~\ref{rem:main1}.

 (ii)	If  $17\le n\le20$, and $\G$ contains $D_0M_3$, then the right end block must be
	$\tilde D_i$, $0\le i\le3$, respectively, which leads to a contradiction as
	$\mu(\g_n)<\mu(D_0M_3\tilde D_i)$ for $17\le n\le20$. So we assume that $n\geq 21$ and thus  from Lemma~\ref{lem:muUpBound}, $\mu<0.091$.

	For a contradiction, assume that $\G$ contains $H_5=D_0M_3$ with the components of $\x$ as depicted in Figure \ref{figH5H'5}.	By using the eigen-equation \eqref{eq:eigenvector}, we can write $x_2,\ldots,x_6$ in terms of $x_1$ and $\mu$ as follows:
	\begin{align*}
	x_2&=\left(\frac{-\mu}{2}+1\right)x_1, \\
	x_3&=\left(\frac{\mu^2}{2}-3\mu+1\right)x_1, \\
	x_4&=\frac{-1}{4}\left(\mu^3-10\mu^2+24\mu-4\right)x_1,\\
	x_5&=\left(\frac{\mu^4-14\mu^3+62\mu^2-88\mu+12}{2(6-\mu)}\right)x_1,\\
	x_6&=\left(\frac{\mu^5-17\mu^4+102\mu^3-252\mu^2+216\mu-24}{4(\mu-6)}\right)x_1.
	\end{align*}
	\begin{figure}[H]
		\captionsetup[subfigure]{labelformat=empty}
		\centering
		\begin{tikzpicture}[scale=0.9]
		\vertex[fill] (r) at (0,0) [label=left:\footnotesize{$x_1$}] {};
		\vertex[fill] (r1) at (.5,.5) [label=above:\footnotesize{$x_1$}] {};
		\vertex[fill] (r2) at (.5,-.5) [label=below:\footnotesize{$x_1$}] {};
		\vertex[fill] (r3) at (1.5,.5) [label=above:\footnotesize{$x_2$}] {};
		\vertex[fill] (r4) at (1.5,-.5) [label=below:\footnotesize{$x_2$}] {};
		\vertex[fill] (r5) at (2,0) [label=above:\footnotesize{$x_3$}] {};
		\vertex[fill] (r6) at (2.5,.5) [label=above:\footnotesize{$x_4$}] {};
		\vertex[fill] (r7) at (2.5,-.5) [label=below:\footnotesize{$x_4$}] {};
		\vertex[fill] (r8) at (3,0) [label=above:\footnotesize{$x_5$}] {};
		\vertex[fill] (r9) at (3.5,.5) [label=above:\footnotesize{$x_6$}] {};
		\vertex[fill] (r10) at (3.5,-.5) [label=below:\footnotesize{$x_6$}] {};
		\vertex[fill] (r11) at (4,0) [] {};
		\tikzstyle{vertex}=[circle, draw, inner sep=0pt, minimum size=0pt]
		\vertex (s) at (4.2,.2) []{};
		\vertex (ss) at (4.2,-.2) []{};
      \vertex(sss) at (3.8,-1.1) [] {};
		\path
		(r) edge (r1)
		(r) edge (r2)
		(r) edge (r3)
		(r) edge (r4)
		(r1) edge (r2)
		(r1) edge (r3)
		(r1) edge (r4)
		(r2) edge (r3)
		(r2) edge (r4)
		(r5) edge (r4)
		(r5) edge (r3)
		(r5) edge (r6)
		(r5) edge (r7)
		(r6) edge (r7)
		(r6) edge (r8)
		(r6) edge (r9)
		(r7) edge (r8)
		(r7) edge (r10)
		(r9) edge (r8)
		(r8) edge (r10)
		(r9) edge (r10)
		(r9) edge (r11)
		(r10) edge (r11)
		(r10) edge (s)
		(r11) edge (ss);
		\end{tikzpicture}
		\qquad
\begin{tikzpicture}[scale=0.9]
\vertex[fill] (r1) at (0,.8) [label=above:\footnotesize{$z_1$}] {};
\vertex[fill] (r2) at (.8,1) [label=above:\footnotesize{$z_1$}] {};
\vertex[fill] (r3) at (0,0) [label=below:\footnotesize{$z_1$}] {};
\vertex[fill] (r4) at (.8,-.2) [label=below:\footnotesize{$z_1$}]{};
\vertex[fill] (r5) at (1.6,.8) [label=above:\footnotesize{$z_2$}] {};
\vertex[fill] (r6) at (1.6,0) [label=below:\footnotesize{$z_2$}] {};
\vertex[fill] (r7) at (2.3,.4) [label=above:\footnotesize{$z_3$}] {};
\vertex[fill] (r8) at (2.8,.9) [label=above:\footnotesize{$z_4$}] {};
\vertex[fill] (r9) at (2.8,-.1) [label=below:\footnotesize{$z_4$}]{};
\vertex[fill] (r10) at (3.8,.9) [label=above:\footnotesize{$x_6$}] {};
\vertex[fill] (r11) at (3.8,-.1) [label=below:\footnotesize{$x_6$}] {};
\vertex[fill] (r12) at (4.3,.4) [] {};
\tikzstyle{vertex}=[circle, draw, inner sep=0pt, minimum size=0pt]
\vertex (s) at (4.5,.6)[] {};
\vertex (ss) at (4.5,.2) [] {};
\path
(r5) edge (r2)
(r1) edge (r2)
(r1) edge (r5)
(r1) edge (r3)
(r1) edge (r4)
(r2) edge (r3)
(r2) edge (r4)
(r3) edge (r6)
(r4) edge (r3)
(r4) edge (r6)
(r5) edge (r6)
(r7) edge (r6)
(r5) edge (r7)
(r7) edge (r8)
(r9) edge (r7)
(r9) edge (r8)
(r10) edge (r8)
(r11) edge (r8)
(r10) edge (r9)
(r9) edge (r11)
(r10) edge (r11)
(r10) edge (r12)
(r11) edge (r12)
(r12) edge (s)
(r12) edge (ss);
\end{tikzpicture}
			\caption{The subgraphs $H_5$ (left) and $H'_5$ (right) and the components of $\x$ and $\x'$}
			\label{figH5H'5}
	\end{figure}
We obtain a contradiction by showing that if we replace $H_5$ by $H'_5$ (of Figure~\ref{figH5H'5}) to obtain $\G'$,
then $\mu(\G')<\mu$.
We define a vector $\x'$ on $V(\G')$ such that its components on $H'_5$ are as  given in Figure~\ref{figH5H'5}  and on the rest of vertices agree with $\x$. We set
\begin{equation*}
z_1=\omega x_1,~ z_2=\omega(1-\mu)x_1,~ z_3={\omega}(\mu^2-4\mu+1)x_1,~ z_4=\frac{-\omega}{2}(\mu^3-8\mu^2+15\mu-2)x_1,
\end{equation*}
where
$$\omega=\frac{\mu^5-17\mu^4+102\mu^3-252\mu^2+216\mu-24}{\mu^5-17\mu^4+103\mu^3-261\mu^2+238\mu-24}.$$
Taking into account that $\x\perp\1$, we see that
$$\delta=\x'\1^\top=4z_1+2z_2+z_3+2z_4-3x_1-2x_2-x_3-2x_4-x_5.$$
Also, since $\|\x\|=1$,
we obtain that
$$\|\x'\|^2=1+4z_1^2+2z_2^2+z_3^2+2z_4^2-3x_1^2-2x_2^2-x_3^2-2x_4^2-x_5^2.$$
 Further we have
 \begin{align*}
x' L(\G')\x'^\top &=\mu+ 2(z_1-z_2)^2+2\sum_{i=1}^{3}(z_i-z_{i+1})^2+4(z_4-x_6)^2\\
 &\quad-4(x_1-x_2)^2-2\sum_{i=1}^{5}(x_i-x_{i+1})^2-2(x_4-x_6)^2.
 \end{align*}
 As $\|\x'\|^2-\frac{\delta^2}{n}>0$, then $\frac{\x'L(\G')\x'^\top}{\|\x'\|^2-\frac{\delta^2}{n}}<\mu$ if and only  if $\x'L(\G')\x'^\top-\mu\|\x'\|^2+\mu\frac{\delta^2}{n}<0$. So, by substituting the values of $x_2,\ldots,x_6,  z_1,\ldots, z_4,$  and $\delta$ in terms of $x_1$ and $\mu$, we must show that
\begin{align*}
	&{\big{(}} (\mu^9-28\mu^8+326\mu^7-2042\mu^6+7429\mu^5-15770\mu^4+18492\mu^3-10320\mu^2+1800\mu-96)
	n \\
	&+(2\mu^7-46\mu^6+418\mu^5-1886\mu^4+4296\mu^3-4280\mu^2+1000\mu){\big{)}}
	{\big{(}}  \mu^5-18\mu^4+119\mu^3-348\mu^2 \\
	&+408\mu-100 {\big{)}}
	{\big{(}}  \mu-5{\big{)}}
	x_1^2 \mu^2 <0.
	\end{align*}
	Given that
	$n\geq21$ and  $\mu<0.091$, this is equivalent to
	\begin{align*}
	&21\mu^9-588\mu^8+6848\mu^7-42928\mu^6+156427\mu^5-333056\mu^4+392628\mu^3\\
	&-221000\mu^2+38800\mu-2016<0.
	\end{align*}
	This holds as $\mu<0.091$, and so by  Lemma~\ref{remark:delta},
	$\mu(\G')<\mu$, a contradiction.

  (iii)	For a contradiction, assume that $\G$ contains $H_6=D_2M_3$.
If $\G$  has only one middle block, and its right end block is different from $D_2$,
 then we are done by the previous lemmas. The right end block also cannot be $\tilde D_2$, since  $\mu(\g_{21})<\mu(D_2M_3\tilde D_2)$.
It follows that $\G$ has at least two middle blocks, which in turn implies that $n\geq26$ and so by  Lemma~\ref{lem:muUpBound},   $\mu<0.059$.

 Let the components of $\x$ on $H_6$ be as depicted in Figure \ref{figH6H'6}. By using the eigen-equation \eqref{eq:eigenvector}, we can write the weights of the vertices of $H_6$ in terms of $\mu$ and $x_8$ as follows:
\begin{align*}
x_1&=-4(2\mu^2-19\mu+42) \omega x_8,\\
x_2&=2(\mu-6)(\mu^2-7\mu+14)\omega x_8, \\
x_3&=4(\mu^3-12\mu^2+43\mu-42)wx_8, \\
x_4&=-2(\mu^4-16\mu^3+87\mu^2-176\mu+84)\omega x_8, \\
x_5&=2(\mu^5-19\mu^4+132\mu^3-400\mu^2+470\mu-84)\omega x_8, \\
x_6&=-(\mu^6-23\mu^5+206\mu^4-896\mu^3+1896\mu^2-1612\mu+168)\omega x_8, \\
x_7&=-2(\mu^6-21\mu^5+170\mu^4-662\mu^3+1244\mu^2-932\mu+84)\omega x_8,
\end{align*}
where $\omega=(\mu^7-24\mu^6+231\mu^5-1136\mu^4+2996\mu^3-4012\mu^2+2200\mu-168)^{-1}$.
	
	\begin{figure}[H]
		\captionsetup[subfigure]{labelformat=empty}
		\centering
		\begin{tikzpicture}[scale=0.9]
\vertex[fill] (r1) at (0,0) [label=below:\footnotesize{$x_1$}] {};
\vertex[fill] (r2) at (0,.8) [label=above:\footnotesize{$x_1$}] {};
\vertex[fill] (r3) at (.8,1) [label=above:\footnotesize{$x_2$}] {};
\vertex[fill] (r4) at (.8,-.2) [label=below:\footnotesize{$x_2$}] {};
\vertex[fill] (r5) at (1.3,.4) [label=above:\footnotesize{$x_3$}] {};
\vertex[fill] (r6) at (2,.8) [label=above:\footnotesize{$x_4$}] {};
\vertex[fill] (r7) at (2,0) [label=below:\footnotesize{$x_4$}] {}; {};
\vertex[fill] (r8) at (2.7,.4) [label=above:\footnotesize{$x_5$}] {};
\vertex[fill] (r9) at (3.2,-.1) [label=below:\footnotesize{$x_6$}] {};
\vertex[fill] (r10) at (3.2,.9) [label=above:\footnotesize{$x_6$}] {};
\vertex[fill] (r) at (3.7,.4) [label=above:\footnotesize{$x_7$}] {};
\vertex[fill] (r11) at (4.2,-.1)[label=below:\footnotesize{$x_8$}] {};
\vertex[fill] (r12) at (4.2,.9) [label=above:\footnotesize{$x_8$}] {};
\vertex[fill] (r13) at (4.7,.4)  [] {};
\tikzstyle{vertex}=[circle, draw, inner sep=0pt, minimum size=0pt]
\vertex (s) at (4.9,.6) [] {};
\vertex (ss) at (4.9,.2) [] {};
\vertex(sss) at (3.8,-.7) [] {};
\path
(r5) edge (r2)
(r1) edge (r2)
(r1) edge (r5)
(r1) edge (r3)
(r1) edge (r4)
(r2) edge (r3)
(r2) edge (r4)
(r3) edge (r6)
(r4) edge (r3)
(r4) edge (r7)
(r5) edge (r7)
(r5) edge (r6)
(r6) edge (r7)
(r7) edge (r8)
(r6) edge (r8)
(r8) edge (r9)
(r10) edge (r8)
(r9) edge (r10)
(r9) edge (r11)
(r9) edge (r)
(r10) edge (r)
(r11) edge (r)
(r12) edge (r)
(r10) edge (r12)
(r12) edge (r11)
(r11) edge (r13)
(r12) edge (r13)
(s) edge (r13)
(ss) edge (r13);
\end{tikzpicture}
		\qquad
\begin{tikzpicture}[scale=.9]
		\vertex[fill] (r1) at (0,1) [label=above:\footnotesize{$z_1$}] {};
		\vertex[fill] (r2) at (.8,1.2) [label=above:\footnotesize{$z_1$}] {};
		\vertex[fill] (r3) at (0,0) [label=below:\footnotesize{$z_1$}] {};
		\vertex[fill] (r4) at (.8,-.2) [label=below:\footnotesize{$z_1$}] {};
		\vertex[fill] (r5) at (1.6,1) [label=above:\footnotesize{$z_2$}] {};
		\vertex[fill] (r6) at (1.6,0) [label=below:\footnotesize{$z_2$}] {};
		\vertex[fill] (r7) at (2.6,1) [label=above:\footnotesize{$z_3$}] {};
		\vertex[fill] (r8) at (2.6,0) [label=below:\footnotesize{$z_3$}] {};
		\vertex[fill] (r9) at (3.1,.5) [label=above:\footnotesize{$z_4$}] {};
		\vertex[fill] (r10) at (3.6,1) [label=above:\footnotesize{$z_5$}] {};
		\vertex[fill] (r11) at (3.6,0) [label=below:\footnotesize{$z_5$}] {};
		\vertex[fill] (r12) at (4.6,1) [label=above:\footnotesize{$x_8$}] {};
		\vertex[fill] (r13) at (4.6,0) [label=below:\footnotesize{$x_8$}] {};
		\vertex[fill] (r14) at (5.1,.5) [] {};
		\tikzstyle{vertex}=[circle, draw, inner sep=0pt, minimum size=0pt]
		\vertex (s) at (5.3,.7) [] {};
		\vertex (ss) at (5.3,.3) [] {};
		\path
		(r5) edge (r2)
		(r1) edge (r2)
		(r1) edge (r5)
		(r1) edge (r3)
		(r1) edge (r4)
		(r2) edge (r3)
		(r2) edge (r4)
		(r3) edge (r6)
		(r4) edge (r3)
		(r4) edge (r6)
		(r5) edge (r8)
		(r5) edge (r7)
		(r6) edge (r8)
		(r6) edge (r7)
		(r7) edge (r8)
		(r9) edge (r7)
		(r9) edge (r8)
		(r10) edge (r9)
		(r11) edge (r9)
		(r11) edge (r10)
		(r10) edge (r12)
		(r10) edge (r13)
		(r12) edge (r11)
		(r11) edge (r13)
		(r12) edge (r13)
		(r14) edge (r13)
		(r12) edge (r14)
		(r14) edge (s)
		(r14) edge (ss)   ;
		\end{tikzpicture}
			\caption{The subgraphs $H_6$ (left) and $H'_6$ (right) and the components of $\x$ and $\x'$}
			\label{figH6H'6}
	\end{figure}
Now, we replace $H_6$ by $H'_6$ (of Figure~\ref{figH6H'6}) to obtain $\G'$.
We define a vector $\x'$ on $V(\G')$ such that its components on $H'_6$ are as given in Figure~\ref{figH6H'6}  and on the rest of vertices $\G'$ agree with $\x$. We specify the new components as follows:
\begin{align*}
z_1&=-8 \omega' x_8,\quad z_2=8 (\mu-1) \omega' x_8,\quad z_3=-4 (\mu^2-5\mu+2) {\omega'} x_8,\\
\quad z_4&=4 (\mu^3-8\mu^2+13\mu-2)\omega' x_8, \quad z_5=-2 (\mu^4-12\mu^3+43\mu^2-44\mu+4) \omega' x_8,
\end{align*}
where
$\omega'=( \mu^5-15\mu^4+77\mu^3-157\mu^2+110\mu-8)^{-1}.$
Taking into account that $\x\perp\1$, we see that
$$\delta=\x'\1^\top=4z_1+2z_2+2z_3+z_4+2z_5-2x_1-2x_2-x_3-2x_4-x_5-2x_6-x_7.$$
Also, since $\|\x\|=1$,
we obtain
$$\|\x'\|^2=1+4z_1^2+2z_2^2+2z_3^2+z_4^2+2z_5^2-2x_1^2-2x_2^2-x_3^2-2x_4^2-x_5^2-2x_6^2-x_7^2.$$
 Further we have
 \begin{align*}
x' L(\G')\x'^\top &=\mu+ 4(z_1-z_2)^2+4(z_2-z_3)^2+2(z_3-z_4)^2+2(z_4-z_5)^2+4(z_5-x_8)^2\\
&\quad-4(x_1-x_2)^2-2(x_1-x_3)^2-2(x_2-x_4)^2-2\sum_{i=3}^{7}(x_i-x_{i+1})^2-2(x_6-x_8)^2.
\end{align*}
 As $\|\x'\|^2-\frac{\delta^2}{n}>0$, we have $\frac{\x'L(\G')\x'^\top}{\|\x'\|^2-\frac{\delta^2}{n}}<\mu$ if and only  if $\x'L(\G')\x'^\top-\mu\|\x'\|^2+\mu\frac{\delta^2}{n}<0$. So, by  substituting the values of $x_1,\ldots,x_7,  z_1,\ldots, z_5,$  and $\delta$ in terms of $x_8$ and $\mu$, we need to show that
  \begin{equation}\label{eq:fracl}
{\left(p_1(\mu)n+p_2(\mu)\right)p_3(\mu)\mu^2 (\mu-6) x_8^2}<0,
\end{equation}
  in which
 \begin{align*}
p_1(t)&={t}^{12}-39\,{t}^{11}+668\,{t}^{10}-6606\,{t}^{9}+41701\,{t}^{8}-
175339\,{t}^{7}+497026\,{t}^{6}-939272\,{t}^{5}\\
&\quad +1140452\,{t}^{4}-823624\,{t}^{3}+300472\,{t}^{2}-36080\,t+1344,\\
p_2(t)&=2\,{t}^{10}-68\,{t}^{9}+992\,{t}^{8}-8100\,{t}^{7}+40446\,{t}^{6}-
126432\,{t}^{5}+242312\,{t}^{4}-264520\,{t}^{3}\\
&\quad +137816\,{t}^{2}-20208\,t, \\
p_3(t)&={t}^{8}-28\,{t}^{7}+328\,{t}^{6}-2082\,{t}^{5}+7731\,{t}^{4}-16830\,{t
}^{3}+20176\,{t}^{2}-11204\,t+1684.
  \end{align*}
The smallest root of $p_3(t)$ is about $0.227$. As $\mu<0.059$, we have that $p_3(\mu)>0$.
The smallest root of $p_1(t)$ is about $0.081$ and so $p_1(\mu)>0$. Since $n\ge 26$ we also have
$$p_1(\mu)n+p_2(\mu)\ge26p_1(\mu)+p_2(\mu)=p_4(\mu),$$
where
\begin{align*}
p_4(t)&=26\,{t}^{12}-1014\,{t}^{11}+17370\,{t}^{10}-171824\,{t}^{9}+1085218\,{
t}^{8}-4566914\,{t}^{7}+12963122\,{t}^{6}\\
&\quad -24547504\,{t}^{5}+29894064\,{t}^{4}-21678744\,{t}^{3}+7950088\,{t}^{2}-958288\,t+34944.
\end{align*}
The smallest root of $p_4(t)$ is about $0.070$ which means that $p_4(\mu)>0$. Therefore,   $(p_1(\mu)n+p_2(\mu))p_3(\mu)<0$, and so \eqref{eq:fracl} is established. Hence,  by  Lemma~\ref{remark:delta},  $\mu(\G')<\mu$ which is a contradiction.
\end{proof}

Now we are prepared to prove the desired result on the middle blocks of $\G$.

\begin{theorem}\label{thm:middleblock}
	Any middle block of $\G$ is $M_0$.
\end{theorem}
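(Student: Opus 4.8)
The plan is to combine Lemma~\ref{lem:M5orM'1} with Lemma~\ref{lem:noM5} to rule out $M_3$ as a middle block, leaving $M_0$ as the only possibility. By Lemma~\ref{lem:M5orM'1}, every middle block of $\G$ is $M_0$ or $M_3$, so it suffices to show that $\G$ has no middle block equal to $M_3$. Suppose for contradiction that $B=M_3$ is a middle block. The $M_3$ block has its two degree-$2$ cut vertices, so removing $B$ (minus its cut vertices) splits $\G$ into two sides; the block attached to $B$ on the left is either an end block or another middle block.

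First I would analyze the possible left neighbour of the $M_3$ block. By Theorem~\ref{thm:quarticOLD} and Lemma~\ref{lem:M5orM'1} (and Theorem~\ref{thm:endblocks} for end blocks), the block immediately to the left of $B$ is $M_0$, $M_3$, or one of $D_0,\ldots,D_4$ (possibly after passing to the mirror image $\tilde\G$). If it is $M_0$ or $M_3$, then together with $B$ the graph $\G$ contains the subgraph $H_4$ of Figure~\ref{fig:H4}: the $M_3$-block contributes the right half, and the two left-most vertices of $H_4$ have all their left-neighbours in a single cell of the equitable partition (the pair of middle vertices of the preceding $M_0$ or $M_3$ block, which by Remark~\ref{rem:sign} is a cell on which $\x$ is constant). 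To invoke Lemma~\ref{lem:noM5}\,(i) I need $x_{r+4}\ge0$; if instead $x_{r+4}<0$, then by the decreasing/skew-symmetric property from Remark~\ref{rem:sign} the corresponding components on the mirror side are positive, so $\tilde\G$ contains $H_4$ with the sign condition satisfied. Either way Lemma~\ref{lem:noM5}\,(i) gives a contradiction. Similarly, if the left neighbour is $D_1$, $D_3$ or $D_4$, one checks these also present $H_4$ (with the left-most vertices' neighbours lying in one cell, using the special large cells of $D_1,D'_3$ noted in the text), again contradicting part (i) after possibly mirroring.

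This reduces the problem to the case where the block to the left of $B$ is $D_0$ or $D_2$, i.e.\ $\G$ contains $D_0M_3$ or $D_2M_3$ as a subgraph. But these are exactly the configurations $H_5=D_0M_3$ and $H_6=D_2M_3$ forbidden by Lemma~\ref{lem:noM5}\,(ii) and (iii). Here one must be slightly careful about orientation: the cut vertex on the right of the $M_3$ block attaches to the rest of $\G$, so $D_0M_3$ or $D_2M_3$ genuinely sits inside $\G$ with the degree-$2$ vertex of $M_3$ having its neighbours outside, matching the hypotheses of Lemma~\ref{lem:noM5}. If the sign hypotheses in the statements of those parts fail on $\G$, pass to $\tilde\G$ and $-\x$ as before. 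In every case we obtain $\mu(\G')<\mu$ for some quartic $\G'$ of order $n$, contradicting minimality of $\G$.

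The main obstacle is the bookkeeping in the reduction step: one has to verify that for each admissible left-neighbour block the relevant subgraph ($H_4$, $D_0M_3$, or $D_2M_3$) is actually present with the precise cell-structure hypotheses of Lemma~\ref{lem:noM5} (in particular the ``all left-neighbours in a single cell'' condition for $H_4$, and the sign conditions), and that the mirror-image trick legitimately restores any failed sign hypothesis. Once that casework is laid out, the contradiction is immediate from Lemma~\ref{lem:noM5}, and together with Lemma~\ref{lem:M5orM'1} we conclude that every middle block of $\G$ is $M_0$.
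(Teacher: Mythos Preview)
Your overall strategy matches the paper's: invoke Lemma~\ref{lem:M5orM'1} to reduce to $M_0$ or $M_3$, then use the three parts of Lemma~\ref{lem:noM5} to eliminate $M_3$ by a case split on the neighbouring block. However, there is a genuine gap in your mirror argument, and a couple of smaller slips in the casework.

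The main gap is in the step ``if $x_{r+4}<0$, pass to $\tilde\G$ and the sign condition is satisfied.'' The subgraph $H_4$ is \emph{not} left--right symmetric (its leftmost pair has degree~$2$ inside $H_4$, the rightmost pair degree~$3$), so mirroring the $H_4$ you found does not produce another copy of $H_4$ in $\tilde\G$. You must instead locate a \emph{new} $H_4$ on the other side. The paper does this explicitly: if $\x$ is negative from the middle vertex of $B$ onwards, then either all middle blocks are $M_3$ (so the rightmost one sits in some $M_3\tilde D_i$, and one reruns the end-block argument on $\tilde\G$), or somewhere to the right there is an $M_3M_0$ junction; in $\tilde\G$ that junction becomes $M_0M_3$, which genuinely contains $H_4$ with the cell condition, and the required sign is now met because $-\x$ is positive there. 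Without this step your mirror trick is unjustified. Also, Remark~\ref{rem:sign} gives only ``strictly decreasing with one sign change,'' not skew-symmetry.

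Two smaller points. First, when the left neighbour of $B$ is another $M_3$, the cell condition in Lemma~\ref{lem:noM5}\,(i) fails: the left neighbours of the leftmost pair of $H_4$ are then the middle vertex \emph{and} the first pair of that preceding $M_3$, which lie in two different cells. The paper avoids this by (implicitly) taking $B$ to be the leftmost $M_3$, so its left neighbour is forced to be $M_0$ or an end block. Second, parts (ii) and (iii) of Lemma~\ref{lem:noM5} carry no sign hypothesis, so there is nothing to ``fail'' there; once $\G$ contains $D_0M_3$ or $D_2M_3$ you are done outright.
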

\begin{proof}
By Lemma~\ref{lem:M5orM'1}, any middle block of $\G$ is either $M_0$ or $M_3$.
For a contradiction, assume that some middle block $B$ of $\G$ is $M_3$.

First, assume that $B$ is a block next to an end block.
By Theorem~\ref{thm:endblocks}, either of the end block of $\G$ is one of $D_0,D_1,D_2,D_3$, or $D_4$.
By Lemma~\ref{lem:noM5}\,(ii),(iii), $D_0M_3$ and $D_2M_3$  are not possible.
Let $H\in\{D_1M_3,D_3M_3,D_4M_3\}$. Then $H$ contains an induced subgraph isomorphic to $H_4$ possessing the additional condition of  Lemma~\ref{lem:noM5}\,(i) on the neighbors of the left-most vertices. If the components of $\x$ on the first four vertices of $M_3$ in $H$ are non-negative, then we are done.
Otherwise, $\x$ is negative on the vertices of $\G$ starting from the middle vertex of $M_3$ in $H$.
If all the middle blocks of $\G$ are $M_3$, then we have a right end block $M_3\tilde D_i$ which is not possible by the above argument.
Otherwise, we have the subgraph $M_3M_0$ in $\G$. Under the above condition, $\tilde\G$ contains $H_4$ satisfying the conditions of  Lemma~\ref{lem:noM5}\,(i), which leads again to a contradiction.

Now, assume that $B$ is not a block next to an end block. So the block on its left must be an $M_0$ which means that $\G$ has a $M_0M_3$ subgraph. However,  $M_0M_3$ also contains an induced subgraph isomorphic to $H_4$. If the components of $\x$ on the first three vertices of $M_3$ in $B$ are non-negative, then we are done. Otherwise, similar to the above argument, we obtain a contradiction.
\end{proof}

\section*{Acknowledgements}
The second author carried out this work during a Humboldt Research Fellowship at the University of Hamburg. He thanks the Alexander von Humboldt-Stiftung for financial support.

\end{document}